\documentclass{article}
\usepackage{graphicx} % Required for inserting images
\usepackage[a4paper, left=3.5cm, right=3.5cm, top=3cm]{geometry}
\usepackage[ english]{babel}
\usepackage{comment}
\usepackage{tabularx}
\usepackage{amsfonts}
\usepackage{amsmath, amssymb,amsthm}
\usepackage{mathtools}
\usepackage{relsize}
\usepackage{hyperref}
\usepackage{comment}
\usepackage[utf8]{inputenc}
\usepackage{enumerate, enumitem}
\usepackage{endnotes}

\newtheorem{theorem}{Theorem}[section]
\newtheorem{Def}[theorem]{Definition}

\newtheorem{lemma}[theorem]{Lemma}
\newtheorem{remark}[theorem]{Remark}

\newtheorem{corollary}[theorem]{Corollary}

\title{Low energy $\varepsilon$-harmonic maps into the round sphere}
\author{Andrew M. Roberts\endnote{School of Mathematics, University of Leeds, Leeds, LS2 9JT, United Kingdom\\ A.M.Roberts@leeds.ac.uk}}
\date{\today}

\begin{document}

\maketitle
\begin{abstract}
    In this paper we classify the low energy $\varepsilon$-harmonic maps from the surfaces of constant curvature with positive genus into the round sphere. We find that all such maps with degree $\pm1$ are all quantitively close to a bubble configuration with bubbles forming at special points on the domain with bubbling radius proportional to $\varepsilon^{1/4}$.
\end{abstract}

\section{Introduction}

The Dirichlet energy is one of the most widely studied objects in geometric analysis. Let $(M^2,g)$, $(N^n,h)$ be Riemannian manifolds and $u\in W^{1,2}(M,N)$. The Dirichlet energy is defined as follows
\begin{equation*}
    E[u]=\int_M|\nabla u|^2.
\end{equation*}
Critical points of this energy are known as harmonic maps. The Dirichlet energy functional is not a particularly nice functional, in the sense that regularity theory for it is difficult and it does not satisfy the Palais-Smale compactness conditions, so existence is also difficult.

In order to deal with the existence problem Sacks and Uhlenbeck introduced in \cite{Sacks_Uhlenbeck} the notion of $\alpha$-harmonic maps which arise as critical points of a perturbed energy functional. Regularity for these maps is much easier and the functional does indeed satisfy the Palais-Smale conditions, so in particular energy minimisers within homotopy classes exist. As $\alpha$ tends to 1 their perturbed functional returns to the Dirichlet energy so if we take a sequence of $\alpha_k$-harmonic maps $u_k$ with $k\rightarrow0$ we hope that this will converge to a fully harmonic map. This does indeed happen weakly in $W^{1,2}$ and in $C^\infty_{\text{loc}}$ away from a finite number of points. At these points we observe the standard bubbling phenomenon, when rescaled we obtain harmonic maps from the sphere.

In \cite{Lamm_epsilon} Lamm introduced the notion of $\varepsilon$-energy for $\varepsilon>0$ and $u\in W^{2,2}(M,N)$ defined as
\begin{equation}
    E_{\varepsilon}[u]=\int_M|\nabla u|^2+\varepsilon|\Delta u|^2
\end{equation}
where we further assume that $N$ is isometrically embedded into some Euclidean space. We call critical points of this energy $\varepsilon$-harmonic maps. This again satisfies Palais-Smale and has good regularity theory. Note that the Laplacian here is the extrinsic Laplacian obtained by viewing $u$ as a map $M\rightarrow N\hookrightarrow\mathbb{R}^l$ and so the energy depends on the embedding in Euclidean space chosen for $N$. Using the tension field instead would not give us any benefits as harmonic maps would still be critical points of our `intrinsic-$\varepsilon$-energy'. This offers the added complications from being a higher order term and the Euler-Lagrange equation becomes a 4th order equation, it however has the added benefit of linearity which offers some benefits in computations.

One of the questions that can be asked about these approximate harmonic maps is which maps can be obtained in the limit. When both the domain and the target the round 2 sphere, $\mathbb{S}^2$, all harmonic maps, up to reflection, are rational transformations. So in particular they must be M\"obius transformations in the degree 1 case. Though in fact it turns out not all of these can be reached by our approximate harmonic maps. Lamm, Malchiodi and Micallef \cite{LMM_limits},\cite{LMM_gap} showed that any degree $\pm1$ $\alpha$-harmonic maps with $\alpha$ sufficiently close to 1 and $\alpha$-energy sufficiently small must be a rotation, up to orientation.
The same result also holds for $\varepsilon$-harmonic maps, in \cite{horter_lamma_micallef} H\"orter, Lamm and Micallef showed that any degree $\pm1$ $\varepsilon$-harmonic map with $\varepsilon$ and $\varepsilon$-energy sufficiently small must be a rotation, up to orientation.  Gianocca also showed that a similar result holds for the Ginzburg-Landau approximation in \cite{gianocca}. For any $\gamma>0$ there exists a $\varepsilon_0$ such that all critical points $u:\mathbb{S}^2\rightarrow\mathbb{R}^3$ of the Ginzburg-Landau energy with energy $E^{GL}_\varepsilon[u]<8\pi-\gamma$ must be rotations, up to a scaling and reflection.

We will now study the case where the domain $\Sigma$ is some Riemannian surface of constant curvature and the target is again round $\mathbb{S}^2$. The key difference in this case is that there do not exist any degree $\pm1$ harmonic maps $u:\Sigma\rightarrow\mathbb{S}^2$. However we can certainly find degree $\pm1$ $\varepsilon$-harmonic maps by taking the energy minimiser within the homotopy class. By taking a sequence of degree 1 $\varepsilon$-harmonic maps with $\varepsilon$ decreasing to 0 and $\varepsilon$-energy decreasing to $4\pi$ we see that we must be in the case of bubble convergence. Due to the fact that our target is the round sphere we can use the energy identity and no neck results proved in \cite{Lamm_epsilon} and \cite{bayer_roberts}. These allow us to say that we are $W^{1,2}$ and $L^\infty$ close to a bubble configuration containing a constant limit map and a harmonic map $\mathbb{S}^2\rightarrow\mathbb{S}^2$. This gives us a qualitative picture of what $\varepsilon$-harmonic maps must look like when $\varepsilon$ is sufficiently small and $\varepsilon$-energy is sufficiently close to $4\pi$.

This same phenomenon occurs in the $\alpha$-harmonic case and in \cite{sharp_lowenergyalpha} Sharp shows quantitive results about this setup.
Indeed he shows that bubbles can only be blown at critical points of a specific function on the domain $\mathcal{J}$ and that the bubbling radius must be proportional to $\sqrt{\alpha-1}$, with the exact scale depending on the value of $\mathcal{J}$ at the blow up point. The proof of this relies on constructing a set of explicit singularity models $\mathcal{Z}$ which we know we are close to. Then by detailed analysis of the $\alpha$-energy on $\mathcal{Z}$ they obtain their results. This builds on the idea introduced by Malchiodi, Rupflin and Sharp in \cite{Malchiodi_Rupflin_Sharp}, and further developed by Rupflin in \cite{rupflin_lojasiewicz} and \cite{rupflin_lowenergylevelsharmonic}. 

In this paper we will show that this approach will also work for $\varepsilon$-harmonic maps and establish similar quantitive results in the low energy, degree $\pm1$ case.
To begin we define our domains as well as a special function.
\begin{Def}\label{def: sigma and J def}
    Let $(\Sigma, g)$ be a closed Riemannian surface of genus $\gamma\geq 1$ equipped with a metric $g$ of constant curvature zero when $\gamma=1$ and of constant curvature -1 otherwise. In the flat case we also impose $\textnormal{Area}_g(\Sigma) = 1$. Let $\{\phi_j\}$ be an arbitrary $L^2$-orthonormal basis of holomorphic one-forms on $\Sigma$ and define
    \begin{equation*}
        \mathcal{J} (a) := -2\pi c_\gamma \sum_{j} |\phi_j (a)|^2
    \end{equation*}
where $c_1=1$ and when $\gamma\geq 2$, $c_\gamma=4$. Note that one can show that $\mathcal{J}$ is independent of the choice of basis.
\end{Def}
This $\mathcal{J}$ is closely related to the Green's function on the surface as well as the Bergman kernel, see \cite{Malchiodi_Rupflin_Sharp} section 6.

As discussed we know that in our setting we must look like a harmonic bubble blown at a point.
This means that we will be able to parametrise our set of approximate bubbles $\mathcal{Z}$ by the following three variables. $a\in\Sigma$, the point at which our bubble is blown. $\lambda\in\mathbb{R}_{>1}$, the scale at which our bubble is blown, $1/\lambda$ will therefore be the bubbling radius. $R\in O(3)$, a rotation and choice of orientation of our bubble.
We will set
\begin{equation*}
    \pi_\lambda=\Big(\frac{2\lambda x}{1+\lambda^2|x|^2},\frac{1-\lambda^2|x|^2}{1+\lambda^2|x|^2}\Big)
\end{equation*}
then we will define $z=z_{R,\pi,a}\in C^\infty(\Sigma,\mathbb{S}^2)$ to look like $R\pi_\lambda$ in a neighbourhood of $a$ in local coordinates and be roughly constant away from $a$, see section 2 for the detailed construction. This means our singularity solutions will be a 6 dimensional non-compact manifold. In our neighbourhood of $a$ we have that for large $\lambda$, $\pi_\lambda$ roughly looks like
\begin{equation*}
    \Big(\frac{2x}{\lambda|x|^2},1\Big)\approx\Big(\frac{2}{\lambda}\frac{\partial}{\partial x}\log|x|,1\Big).
\end{equation*} 
The Greens function near $a$ in local coordinates looks like $\log|x|$ + some error term so smoothing out to the roughly constant section using the Greens function is a natural choice.

To motivate our results we will expand the $\varepsilon$-energy along $\mathcal{Z}$. We will have, similarly to the expansions in \cite{Malchiodi_Rupflin_Sharp} and \cite{sharp_lowenergyalpha}, for $z\in\mathcal{Z}$
\begin{equation*}
    E_\varepsilon[z]=4\pi-4\pi\mathcal{J}(a)\frac{1}{\lambda^2}+\frac{32\pi}{3c_\gamma}\varepsilon\lambda^2+\mathcal{O}(\frac{1}{\lambda^3})+\mathcal{O}(\varepsilon).
\end{equation*}
We shall choose $z$ to close to our critical point of $E_\varepsilon$, so differentiating with respect to $\lambda$ and $a$ should give us something small.
In particular one can show that
\begin{equation*}
    \partial_\lambda E_\varepsilon[z]=8\pi\mathcal{J}(a)\frac{1}{\lambda^3}+\frac{64\pi}{3c_\gamma}\varepsilon\lambda+\mathcal{O}(\frac{1}{\lambda^4})+\mathcal{O}(\frac{\varepsilon}{\lambda})
\end{equation*}
and, in the hyperbolic case,
\begin{equation*}
    \nabla_A  E_\varepsilon[z]=4\pi\nabla_A\mathcal{J}(a)\frac{1}{\lambda^2}+\mathcal{O}(\frac{1}{\lambda^3})+\mathcal{O}(\varepsilon\lambda).
\end{equation*}
Where $\nabla_A$ is the derivative of the base point $a$ in the direction $A$.
These align with na\"ively differentiating the energy. We will show that these quantities are indeed quantitively small which gives us information about the bubble scale and location.

The final element we need to introduce is the `$z$-norm' ($\|\cdot\|_z$) first introduced by Rupflin \cite{rupflin_lojasiewicz}. We already know that our maps are converging to a constant in an $L^2$ sense so we need to introduce a factor which blows up near the bubbles so we can see what happens near them. Further, Rupflin showed that the Dirichlet energy is non-degenerate with respect to this norm orthogonally to $\mathcal{Z}$ under certain conditions which will be very important. A difficulty occurs when using this norm in our setting as it does not involve any second order terms. We are able to overcome this by leveraging the smallness of $\varepsilon$.

\vspace{5mm}
We now state the main two theorems of the paper.
\begin{theorem}\label{thm: ratio of lambda to epsilon}
    Let $\Sigma$ be as in \ref{def: sigma and J def}. There exist $\varepsilon_0(\Sigma),\delta_0(\Sigma),C(\Sigma)>0$ such that if $0<\varepsilon<\varepsilon_0$, $u:\Sigma\rightarrow\mathbb{S}^2$ is a $\varepsilon$-harmonic map with $E_\varepsilon[u]\leq4\pi+\delta_0$ then either
    \begin{enumerate}
        \item $u$ has degree zero and $\|\nabla u\|_{L^\infty(\Sigma)}\leq C$, or
        \item  $u$ has degree $\pm 1$ and there exists $z(u)$ realising \textnormal{inf}$\{\|u-z\|_z: z\in\mathcal{Z}\}$, for any such $z(u)$, setting $a=a(z(u))$ and $\lambda=\lambda(z(u))$, we have
        \begin{equation*}
            |\mathcal{J}(a)\frac{1}{\lambda^4}+\frac{8}{3c_\gamma}\varepsilon|\leq C\varepsilon^{3/2}|\log\varepsilon|.
        \end{equation*}
    \end{enumerate}
\end{theorem}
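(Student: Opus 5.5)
The plan follows the strategy of \cite{sharp_lowenergyalpha}, adapted to the fourth-order functional.

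\textbf{Step 1: compactness and the degree-zero case.} Argue by contradiction and compactness. For a sequence $\varepsilon_k\to0$ of $\varepsilon_k$-harmonic maps $u_k$ with $E_{\varepsilon_k}[u_k]\le 4\pi+\delta_0$, apply the bubble tree convergence with energy identity and no-neck property from \cite{Lamm_epsilon}, \cite{bayer_roberts}. Since $\Sigma$ has positive genus, there is no nonconstant harmonic map $\Sigma\to\mathbb{S}^2$ with energy at most $4\pi+\delta_0$: any such map is $\pm$holomorphic of degree at least $2$ and hence has energy at least $8\pi$. So the body map is constant, and there is at most one bubble, which is a degree $\pm1$ harmonic sphere.

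If there is no bubble, $\varepsilon$-regularity gives the uniform gradient bound, and the degree is zero. If there is a bubble, the degree is $\pm1$, and $u$ is close in $W^{1,2}$ and $L^\infty$ to $R\pi_\lambda$ glued at some point $a$. A standard argument shows the infimum of $\|u-z\|_z$ over $\mathcal{Z}$ is attained: minimising sequences cannot escape to $\lambda\to\infty$ or $\lambda$ bounded, by closeness. We also get $\|u-z\|_z\to0$ and $\lambda\to\infty$.

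\textbf{Step 2: the first variation identity.} Let $w=u-z$, with $z=z(u)$. Because $u$ is $\varepsilon$-harmonic, $dE_\varepsilon[u]$ vanishes on tangent vectors $v\in T_u$. Take the tangential projection at $u$ of $\partial_\lambda z$ (normalised as $\lambda^{-1}$ times the scaling generator). Then
\[0=dE_\varepsilon[u](P_u\partial_\lambda z)=dE_\varepsilon[z](\partial_\lambda z)+\text{error}.\]
The error is controlled by $\|w\|_z$ times second-variation bounds, plus the normal-projection correction. The key inputs are the following.

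(a) The expansion $\partial_\lambda E_\varepsilon[z]=8\pi\mathcal J(a)\lambda^{-3}+\frac{64\pi}{3c_\gamma}\varepsilon\lambda+O(\lambda^{-4})+O(\varepsilon/\lambda)$ stated above, which must be verified from the construction in Section 2. The $\varepsilon\lambda$ term comes from $\varepsilon\int|\Delta \pi_\lambda|^2\sim\lambda^2$, and the $\mathcal{J}$ term comes from the Green's function gluing.

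(b) A bound on $\|w\|_z$. Use the orthogonality from minimality (so $w\perp T_z\mathcal Z$ in the $z$-inner product) and Rupflin's nondegeneracy of the Dirichlet second variation orthogonal to $\mathcal Z$. From this obtain
\[\|w\|_z\lesssim \|dE[z]\|_{z^*}+\varepsilon\text{-terms}\lesssim \lambda^{-2}+\varepsilon\lambda .\]
The $\varepsilon|\Delta\cdot|^2$ part has to be handled without a second-order norm. Test the equation against $w$ itself: the term $\varepsilon\int|\Delta w|^2$ comes with a good sign, and the cross term $\varepsilon\int\Delta z\,\Delta w$ is absorbed by Cauchy--Schwarz into $\varepsilon\int|\Delta w|^2+\varepsilon\int|\Delta z|^2$. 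The remaining quantity $\varepsilon\int|\Delta z|^2\sim\varepsilon\lambda^2$ is small. This is the step where smallness of $\varepsilon$ is leveraged.

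\textbf{Step 3: combining.} Plugging into the identity gives
\[\big|\mathcal J(a)\lambda^{-3}+\tfrac{8}{3c_\gamma}\varepsilon\lambda\big|\lesssim \lambda^{-4}+\varepsilon\lambda^{-1}+\|w\|_z^2+\|w\|_z\,(\text{tangent error}).\]
First deduce a priori that $\lambda^{-4}\sim\varepsilon$, i.e.\ $\lambda\sim\varepsilon^{-1/4}$. For the lower bound, use $\mathcal J<0$ everywhere, which holds since the $\phi_j$ have no common zero; this follows by a bootstrap from the crude version of the inequality. Then divide by $\lambda$. The right side becomes $O(\lambda^{-5}+\varepsilon\lambda^{-2}+\dots)=O(\varepsilon^{5/4})$ at worst from the pure expansion errors. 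The claimed bound $\varepsilon^{3/2}|\log\varepsilon|$ is sharper than this, so I expect the expansion errors in (a) are actually $O(\lambda^{-5}\log\lambda)$ after pairing with the scaling direction, by parity or cancellation of the $\lambda^{-4}$ term, with the log from the Green's function cut-off integrals. Likewise $\|w\|_z^2\lesssim(\lambda^{-2}\log^{1/2}\lambda)^2$, giving $\lambda^{-4}\log\lambda\cdot\lambda^{-1}$-type contributions after division, i.e.\ $\varepsilon^{3/2}|\log\varepsilon|$ in the required scale.

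\textbf{Main obstacle.} The hardest part is Step 2(b) together with the sharp error terms in (a). One needs $\|w\|_z\lesssim\varepsilon^{1/2}|\log\varepsilon|^{1/2}$, and the second-variation error pairing $dE_\varepsilon$ at $u$ versus $z$ must be quadratic in $w$. The $\varepsilon\Delta^2$ operator is not controlled by the $z$-norm. My first attempt is to estimate $\varepsilon\int\Delta w\,\Delta(\partial_\lambda z)$ by $\sqrt{\varepsilon}\|\Delta w\|_{L^2}\cdot\sqrt{\varepsilon}\|\Delta\partial_\lambda z\|_{L^2}$. Here $\sqrt\varepsilon\|\Delta w\|_{L^2}$ is bounded via the energy-testing argument, and $\sqrt\varepsilon\|\Delta\partial_\lambda z\|\sim\sqrt\varepsilon$, so the product is of the required order.
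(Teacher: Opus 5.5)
Your architecture is the paper's: compactness, a minimising $z$, testing $dE_\varepsilon(u)=0$ against the projection of $\partial_\lambda z$, Rupflin's nondegeneracy for $\|w\|_z$, then the expansion of $\partial_\lambda E_\varepsilon[z]$. The gap is in how you handle the fourth-order terms: your estimates there are too weak, and that is exactly where the work lies.

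\textbf{Step 2(b).} Absorbing $\varepsilon\int\Delta z\cdot\Delta w$ by Cauchy--Schwarz leaves the additive term $\varepsilon\|\Delta z\|_{L^2}^2\sim\varepsilon\lambda^2$, which is not proportional to $\|w\|_z$. You therefore only get $\|w\|_z\lesssim\lambda^{-2}(\log\lambda)^{1/2}+\varepsilon^{1/2}\lambda$, not $\lambda^{-2}+\varepsilon\lambda$. The quadratic error $\lambda^{-1}\|w\|_z^2$ in the $\partial_\lambda$-identity is then only bounded by $O(\varepsilon\lambda)$. That is the same order as the term $\frac{64\pi}{3c_\gamma}\varepsilon\lambda$ you want to isolate, and, once $\varepsilon\sim\lambda^{-4}$, the same order $\lambda^{-3}$ as $8\pi\mathcal{J}(a)\lambda^{-3}$. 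So the identity carries no information, not even $\lambda\sim\varepsilon^{-1/4}$.

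\textbf{Your ``first attempt''.} It has the same defect. The best a priori bound is $\|\Delta w\|_{L^2}\le\|\Delta u\|_{L^2}+\|\Delta z\|_{L^2}\lesssim\lambda$, and $\|\Delta\partial_\lambda z\|_{L^2}\sim 1$. Hence $\varepsilon\|\Delta w\|_{L^2}\|\Delta\partial_\lambda z\|_{L^2}\lesssim\varepsilon\lambda$, again of leading order rather than the $O(\lambda^{-5}\log\lambda)$ you need. Even with the sharper bound on $\|\Delta w\|_{L^2}$ described below, pairing $\Delta w$ against $\Delta\partial_\lambda z$ in $L^2$ would only give a final error of order $\varepsilon^{5/4}$ up to logarithms.

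\textbf{Step 3 bookkeeping.} You drop the factor $\lambda^{-1}$ carried by $\partial_\lambda z$. With $\|w\|_z^2$ rather than $\lambda^{-1}\|w\|_z^2$ on the right, dividing by $\lambda$ gives $\lambda^{-5}\log\lambda\sim\varepsilon^{5/4}|\log\varepsilon|$, not $\varepsilon^{3/2}|\log\varepsilon|$.

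\textbf{What the paper does instead.} It uses two devices.
First, in terms linear in $w$ it never pairs $\Delta w$ in $L^2$. It integrates by parts onto the model, $\varepsilon\int\Delta z\cdot\Delta W=\varepsilon\int\Delta^2 z\cdot W$ and $\varepsilon\int\Delta\partial_\lambda z\cdot\Delta W=\varepsilon\int\Delta^2\partial_\lambda z\cdot W$. It then uses the pointwise bounds $|\Delta^2 z|\lesssim\lambda^2\rho_z^2$ and $|\Delta^2\partial_\lambda z|\lesssim\lambda\rho_z^2$ together with $\int_\Sigma\rho_z^2\lesssim 1$, giving $\varepsilon\lambda^2\|w\|_z$ and $\varepsilon\lambda\|w\|_z$ (Lemma \ref{Lemma: Ben's 5.1}).
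Second, for the genuinely quadratic remainders $\varepsilon\|\Delta w\|_{L^2}^2$ it uses the interpolation \eqref{eq: Delta into z norm}: $\|\Delta w\|_{L^2}^2\le\|\nabla w\|_{L^2}\|\nabla\Delta w\|_{L^2}\le C\lambda^2\|w\|_z$. Here $\|\nabla^3 u\|_{L^2}\lesssim\lambda^2$ comes from $\varepsilon$-regularity on the neck.
Every $\varepsilon$-contribution thus becomes linear in $\|w\|_z$. This yields $\|w\|_z\lesssim\lambda^{-2}(\log\lambda)^{1/2}+\varepsilon\lambda^2$ and an $O(\lambda^{-5}\log\lambda)$ contribution from the $w$-terms.

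\textbf{Your concern about (a).} It is well founded: an $O(\lambda^{-4})$ error in $\partial_\lambda E_\varepsilon[z]$ only gives $O(\varepsilon^{5/4})$. Lemma \ref{lemma: partial lambda energy} itself only states $O(\lambda^{-4})$, and this term is absent from the combined identity in the paper's proof. So the improvement you expect has to be proved: the cubic Taylor term vanishing on $\mathbb{D}_r$ by parity, and terms such as $\Delta j^\top_\lambda\cdot\partial_\lambda j^\top_\lambda$ and the contribution of $\Sigma\setminus U_r$ being $O(\lambda^{-5}\log\lambda)$ rather than $O(\lambda^{-4})$. Neither your proposal nor the paper's written argument supplies this.

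\textbf{Minor point in Step 1.} Harmonic maps $\Sigma\to\mathbb{S}^2$ need not be $\pm$holomorphic; degree-zero maps into a great circle are an example. What you actually need is twofold. When a bubble is present, the body map is constant by the energy identity and the small-energy gap. In the bubble-free case, there is no degree $\pm1$ harmonic map.
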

\begin{theorem}\label{thm: J must be critical}
     Let $\Sigma$ be as in \ref{def: sigma and J def} with genus $\gamma\geq2$. There exist $\varepsilon_0(\Sigma),\delta_0(\Sigma),C(\Sigma)>0$ such that if $0<\varepsilon<\varepsilon_0$, $u:\Sigma\rightarrow\mathbb{S}^2$ is a $\varepsilon$-harmonic map with degree $\pm1$ and $E_\varepsilon[u]\leq4\pi+\delta_0$ then for any $z(u)$ realising \textnormal{inf}$\{\|u-z\|_z: z\in\mathcal{Z}\}$ with $a=a(z(u))$ we have
    \begin{equation*}
        \nabla_A\mathcal{J}(a)\leq C\varepsilon^{1/4}|\log\varepsilon|.
    \end{equation*}
\end{theorem}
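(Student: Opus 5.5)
Since $u$ is a critical point of $E_\varepsilon$, we have $dE_\varepsilon[u]=0$. The plan is to test this equation against the variation of $z=z(u)$ in the base point, and then compare with the expansion of $\nabla_A E_\varepsilon[z]$ given in the introduction. Fix a unit direction $A$ at $a$ and let $w_A:=\partial_A z$ be the corresponding element of the tangent space of $\mathcal{Z}$ at $z$. Because $u$ is $\varepsilon$-harmonic, $dE_\varepsilon[u](P_u w_A)=0$, where $P_u$ is the projection onto $T_u\mathbb{S}^2$. This gives
\begin{equation*}
\nabla_A E_\varepsilon[z]=dE_\varepsilon[z](w_A)=dE_\varepsilon[z](w_A)-dE_\varepsilon[u](P_u w_A).
\end{equation*}
Inserting the expansion $\nabla_A E_\varepsilon[z]=4\pi\nabla_A\mathcal{J}(a)\lambda^{-2}+\mathcal{O}(\lambda^{-3})+\mathcal{O}(\varepsilon\lambda)$, we obtain
\begin{equation*}
|\nabla_A\mathcal{J}(a)|\lambda^{-2}\lesssim \lambda^{-3}+\varepsilon\lambda+|dE_\varepsilon[z](w_A)-dE_\varepsilon[u](P_uw_A)|.
\end{equation*}

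Step 1 controls the right-hand difference by second order quantities. Write $u=z+w$. Since $z$ minimises $\|u-z\|_z$ over $\mathcal{Z}$, $w$ is $z$-orthogonal to $T_z\mathcal{Z}$, and so the first-order cross term in $w$ against the tangent direction $w_A$ vanishes to leading order. Expanding $dE_\varepsilon$ in $w$, the difference becomes the second variation pairing of $w$ with $w_A$, plus quadratic terms in $w$, plus the error from projecting $w_A$. Using $\|w_A\|_z\lesssim 1$ in the relevant weighted norm, this is bounded by $C(\|dE_\varepsilon[z]\|_{z,*}\,\|w\|_z+\|w\|_z^2)$ and the corresponding $\varepsilon|\Delta\cdot|$ terms. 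The second order terms are controlled by using $\varepsilon$-smallness together with the interior $W^{2,2}$-regularity of $u$ near the bubble.

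Step 2 estimates $\|w\|_z$ using Rupflin's nondegeneracy of the Dirichlet energy orthogonally to $\mathcal{Z}$. Testing the equation against $w$ yields $\|w\|_z\lesssim\|dE_\varepsilon[z]\|_{z,*}$, after absorbing the $\varepsilon\Delta$ contribution, which has a good sign up to lower order terms. The tension-type error of $z$ comes from the Green's function gluing, of size $\lambda^{-2}$ up to logarithms, plus the $\varepsilon$-term of size $\varepsilon\lambda^2\cdot\lambda^{-1}$. This should give $\|w\|_z\lesssim\lambda^{-2}|\log\lambda|^{1/2}+\varepsilon\lambda$. I expect this to be the main obstacle: the $z$-norm has no second-order component, so the fourth-order part of the Hessian has to be handled separately using $\varepsilon\lambda^2\lesssim\lambda^{-2}$.

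Step 3 closes the argument. By Theorem~\ref{thm: ratio of lambda to epsilon}, $\lambda^{-4}\sim\varepsilon$, or more precisely $\lambda\sim\varepsilon^{-1/4}$ since $\mathcal{J}<0$ in the hyperbolic case. Then every error term above is $\lesssim\lambda^{-3}|\log\lambda|\sim\varepsilon^{3/4}|\log\varepsilon|$. Multiplying the inequality by $\lambda^2\sim\varepsilon^{-1/2}$ gives
\begin{equation*}
|\nabla_A\mathcal{J}(a)|\le C\varepsilon^{1/4}|\log\varepsilon|,
\end{equation*}
which is the claim.
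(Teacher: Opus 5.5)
Your architecture is the paper's: test the Euler--Lagrange equation against $\textnormal{d}P(u)[\nabla_A z]$, subtract $\nabla_AE_\varepsilon[z]$ from Lemma~\ref{lemma: nabla A energy}, control the difference by the cross term $\textnormal{d}^2E_\varepsilon(z)[\nabla_Az,W]$ plus terms quadratic in $w$, and insert $\|w\|_z\lesssim\lambda^{-2}(\log\lambda)^{1/2}$ and $\varepsilon\sim\lambda^{-4}$. But Step 1, where the estimate is actually decided, is wrong in both its scaling and its mechanism.

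\textbf{Scaling.} $\|w_A\|_z$ is not $\lesssim 1$. Since $\nabla_Az\approx -A^i\partial_i\pi_\lambda$ on $\mathbb{D}_r$, you have $\|\nabla_Az\|_z\sim\|\nabla^2\pi_\lambda\|_{L^2}\sim\lambda$. The paper accordingly uses $|T|\lesssim\lambda$, $|\nabla T|\lesssim\lambda\rho_z$, $|\Delta T|\lesssim\lambda\rho_z^2$ for $T=\nabla_Az$. So every error in Step 1 carries an extra factor $\lambda$. That factor is exactly what produces the final error $\lambda^{-3}\log\lambda$. Your stated bound $C(\|\textnormal{d}E_\varepsilon(z)\|_{z,*}\|w\|_z+\|w\|_z^2)$ would give $\lambda^{-4}\log\lambda$, so your Step 3 reaches the right number only by assertion. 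Separately, the $\varepsilon$-part of $\textnormal{d}E_\varepsilon(z)$ has dual size $\varepsilon\lambda^2$, not $\varepsilon\lambda$; this is harmless once $\varepsilon\sim\lambda^{-4}$.

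\textbf{Mechanism.} The smallness of $\textnormal{d}^2E_\varepsilon(z)[\nabla_Az,W]$ has nothing to do with $w$ being $z$-orthogonal to $T_z\mathcal{Z}$; that orthogonality only enters the coercivity in your Step 2. Writing
$$\textnormal{d}^2E(z)[T,W]=\langle T,W\rangle_z-\int_\Sigma(\rho_z^2+|\nabla z|^2)\,T\cdot W,$$
orthogonality leaves a term of the same trivial size $C\|\nabla_Az\|_z\|W\|_z\sim\lambda^{-1}(\log\lambda)^{1/2}$. After multiplying by $\lambda^2$ this gives nothing. The missing idea is that $\nabla_Az$ is an approximate Jacobi field, so the functional $V\mapsto\textnormal{d}^2E_\varepsilon(z)[\nabla_Az,V]$ is itself small on all of $\Gamma^2(z)$:
\begin{equation*}
|\textnormal{d}^2E_\varepsilon(z)[\nabla_Az,V]|\le C\lambda\Big(\frac{(\log\lambda)^{1/2}}{\lambda^2}+\varepsilon\lambda^2\Big)\|V\|_z .
\end{equation*}
This is the third estimate of Lemma~\ref{Lemma: Ben's 5.1}, and it needs an actual computation:
\begin{enumerate}
\item integrate by parts on $\mathbb{D}_r$;
\item use $(A^j\partial_j\Delta\pi_\lambda+|\nabla\pi_\lambda|^2A^j\partial_j\pi_\lambda)^{\top_z}=-(A^j\partial_j|\nabla\pi_\lambda|^2)\,\pi_\lambda^{\top_z}$ together with $|\pi_\lambda^{\top_z}|\lesssim|x|/\lambda$;
\item handle the boundary term on $\partial\mathbb{D}_r$ with Remark~\ref{remark: log estimate};
\item bound the fourth-order part using $|\Delta^2\nabla_Az|\lesssim\lambda^3\rho_z^2$.
\end{enumerate}

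\textbf{Second-order terms.} The quadratic fourth-order terms include $C\lambda\,\varepsilon\|\Delta w\|_{L^2}^2$ (Lemma~\ref{Lemma: Ben's 5.3}). The $W^{2,2}$ control you invoke only gives $\|\Delta w\|_{L^2}\lesssim\lambda$, hence a term of size $\varepsilon\lambda^3\sim\lambda^{-1}$, which again destroys the estimate. You need the third-order bound $\|\nabla^3u\|_{L^2}\lesssim\lambda^2$ of Lemma~\ref{lemma: L^ infintity and 2 bounds} and the interpolation \eqref{eq: Delta into z norm}, $\|\Delta w\|_{L^2}^2\le\|w\|_z\|\nabla\Delta w\|_{L^2}\lesssim\lambda^2\|w\|_z$.

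With these ingredients every error is $O(\lambda\cdot\lambda^{-4}\log\lambda)=O(\lambda^{-3}\log\lambda)$, and your Step 3 then goes through.
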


These theorems immediately give us an explicit picture of what must happen in the limit.
\begin{corollary}
    Let $u_k:\Sigma\rightarrow\mathbb{S}^2$ be a sequence of $\varepsilon_k$-harmonic maps with $0<\varepsilon_k\downarrow0$ and $E_{\varepsilon_k}[u_k]\rightarrow\Lambda\leq4\pi$ then there exists a subsequence such that either
    \begin{enumerate}
        \item $u_k$ all have degree zero and converge smoothly to some degree zero harmonic map $u:\Sigma\rightarrow\mathbb{S}^2$, or
        \item $u_k$ all have degree 1 or all have degree -1 and converge in the standard bubbling sense to a bubble tree with the limit map constant and one bubble which is blown at a critical point $a_c$ of $\mathcal{J}$ at a bubbling rate of
        \begin{equation*}
            r_k=\Big(\frac{8\varepsilon_k}{3c_\gamma}|\mathcal{J}(a)|^{-1}\Big)^{1/4}.
        \end{equation*}
    \end{enumerate}
\end{corollary}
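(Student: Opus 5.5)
The corollary is deduced from the two main theorems, combined with the qualitative bubbling picture from the energy identity and no-neck results of \cite{Lamm_epsilon} and \cite{bayer_roberts}.

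\textbf{Setup.} Since $E_{\varepsilon_k}[u_k]\to\Lambda\le 4\pi$, for $k$ large we have $\varepsilon_k<\varepsilon_0$ and $E_{\varepsilon_k}[u_k]\le 4\pi+\delta_0$. Hence Theorem \ref{thm: ratio of lambda to epsilon} applies to every $u_k$ with $k$ large. The degree is an integer, so after passing to a subsequence all $u_k$ fall into the same alternative with a common degree $0$, $1$ or $-1$.

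\textbf{Degree zero.} Here $\|\nabla u_k\|_{L^\infty}\le C$. The $W^{1,2}$ bound is uniform, the gradient is uniformly bounded, and no energy concentrates, so the standard $\varepsilon$-regularity and higher-order estimates for $\varepsilon$-harmonic maps from \cite{Lamm_epsilon} bootstrap to uniform $C^m$ bounds. I expect this bootstrap to be the main point of care: the terms carrying the factor $\varepsilon$ are fourth order, so I would use Lamm's estimates, which are uniform in $\varepsilon$ once the energy is small on balls. This uniform smallness follows from the gradient bound. Arzel\`a--Ascoli then gives a smoothly convergent subsequence. The limit is harmonic, because the $\varepsilon\Delta^2$ contribution vanishes in the limit, and it has degree zero.

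\textbf{Degree $\pm1$: the bubbling picture.} There are no degree $\pm1$ harmonic maps $\Sigma\to\mathbb{S}^2$, so bubbling must occur. The energy identity forces the limit to consist of a single bubble of energy $4\pi$ over a limit map of zero energy, which must therefore be constant. Take $z_k=z(u_k)\in\mathcal{Z}$ with parameters $a_k,\lambda_k,R_k$. Since $\|u_k-z_k\|_{z_k}\to0$, these parameters describe the bubble, and in particular $\lambda_k\to\infty$.

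\textbf{Degree $\pm1$: location and scale.} By compactness of $\Sigma$, pass to a further subsequence with $a_k\to a_c$.
\begin{itemize}
\item \emph{Location.} For $\gamma\ge2$, Theorem \ref{thm: J must be critical} gives $|\nabla\mathcal{J}(a_k)|\le C\varepsilon_k^{1/4}|\log\varepsilon_k|\to0$. By continuity $\nabla\mathcal{J}(a_c)=0$, so $a_c$ is critical. For $\gamma=1$ the Bergman kernel is translation invariant on the flat torus, so $\mathcal{J}$ is constant and every point is critical.
\item \emph{Scale.} Divide the estimate of Theorem \ref{thm: ratio of lambda to epsilon} by $\varepsilon_k$:
\[
\Big|\mathcal{J}(a_k)\frac{1}{\varepsilon_k\lambda_k^4}+\frac{8}{3c_\gamma}\Big|\le C\varepsilon_k^{1/2}|\log\varepsilon_k|\to0 .
\]
Since $\mathcal{J}<0$ (it is $-2\pi c_\gamma$ times the Bergman density, which is positive for genus $\ge1$), this gives
\[
\frac{1}{\lambda_k^4}=\frac{8\varepsilon_k}{3c_\gamma}|\mathcal{J}(a_k)|^{-1}\,(1+o(1)).
\]
As $\mathcal{J}(a_k)\to\mathcal{J}(a_c)\neq0$, the bubbling radius $r_k=1/\lambda_k$ is asymptotic to $\big(\tfrac{8\varepsilon_k}{3c_\gamma}|\mathcal{J}(a_c)|^{-1}\big)^{1/4}$, as claimed.
\end{itemize}
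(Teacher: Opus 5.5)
Your proposal is correct and is essentially the deduction the paper intends: the paper presents the corollary as an immediate consequence of Theorems \ref{thm: ratio of lambda to epsilon} and \ref{thm: J must be critical} together with the bubble convergence theorem, and you also make explicit the flat case (where $\mathcal{J}$ is constant) and the sign $\mathcal{J}<0$ needed to extract the scale. The one loose step is in degree zero, where a uniform gradient bound controls only the Dirichlet part of $E_{\varepsilon_k}(u_k,B)$ on small balls and not $\varepsilon_k\int_B|\Delta u_k|^2$; get that smallness from the energy identity of Theorem \ref{theorem: bubble convergence} instead, since the gradient bound rules out bubbles, so $E_{\varepsilon_k}[u_k]\to E[u_\infty]$, and lower semicontinuity of the Dirichlet energy then forces $\varepsilon_k\int_\Sigma|\Delta u_k|^2\to0$.
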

\begin{center}
    Acknowledgements
\end{center}
The author would like to thank his supervisor Ben Sharp for many helpful discussions. The author is funded by Engineering and Physical Sciences Research Council (EPSRC) - EP/W524372/1, Studentship 2927009.

\section{The space of approximate bubbles}

We start by explicitly defining $\mathcal{Z}$, our space of singularity models as in \cite{rupflin_lojasiewicz} and \cite{sharp_lowenergyalpha}.
With $(\Sigma,g)$ as in Definition \ref{def: sigma and J def} set $\iota=\frac{1}{2}\text{inj}(\Sigma,g)$, half of the injectivity radius. In the hyperbolic case set $\rho=\tanh(\iota)$ and note that for any $a\in\Sigma$ there exists an orientation preserving isometric isomorphism
\begin{equation*}
    F_a:(B_{2\iota}(a),g)\rightarrow(\mathbb{D}_\rho,\frac{4}{(1-|x|^2)^2}g_E)
\end{equation*}
with $F_a(a)=0$, $\mathbb{D}_\rho=\{x\in\mathbb{R}^2:|x|<\rho\}$ and $g_E$ is the Euclidean metric. In the flat case we simply take $\rho=2\iota$ and $F_a$ to be some choice of flat orientation preserving chart.
We will fix the Greens function on $\Sigma$ which solves
\begin{equation*}
    -\Delta_pG(p,q)=2\pi\delta_q-\frac{2\pi}{\text{Vol}(\Sigma)}.
\end{equation*}
From now on work with local coordinates, for $p,q\in B_{2\iota}(a)$ we write $x=F_a(p)$ and $y=F_a(q)$. In these coordinates we can locally write the Greens function as
\begin{equation}
    G(p,q)=G_a(x,y)=-\log|x-y|+J_a(x,y),
\end{equation}
where $J_a$ is some smooth function, dependent on the choice of $F_a$.
Set $r=\rho/4$, then fix some $\phi\in C_c^\infty(\mathbb{D}_{2r},[0,1])$ radial with $\phi\equiv 1$ on $\mathbb{D}_r$. Now define for $\lambda\geq1$
\begin{equation*}
    \tilde{z}_{\lambda,a}(p)=\begin{cases}
        \hat{z}_{\lambda,a}(F_a(p))&\text{if }p\in B_\iota(a)\\
        (\frac{2}{\lambda}(\partial_{q^1}G(p,a)-\partial_{y^1}J_a(0,0)),\frac{2}{\lambda}(\partial_{q^2}G(p,a)-\partial_{y^2}J_a(0,0)),-1)&\text{if }p\notin B_\iota(a)
    \end{cases}
\end{equation*}
where, in our local coordinates
\begin{align*}
    \hat{z}_{\lambda,a}(x)=&\phi(x)\big(\pi_\lambda(x)+(\frac{2}{\lambda}(\nabla_yJ_a(x,0)-\nabla_yJ_a(0,0)),0)\big)\\
    &+(1-\phi(x))(\frac{2}{\lambda}(\nabla_yG_a(x,0)-\nabla_yJ_a(0,0)),-1)
\end{align*}
with $\pi_\lambda(x)=\big(\frac{2\lambda x}{1+\lambda^2|x|^2},\frac{1-\lambda^2|x|^2}{1+\lambda^2|x|^2}\big)$, the stereographic projection at scale $\lambda$.
Finally set
\begin{align*}
    z_{\lambda,a}=&P_{\mathbb{S}^2}(\tilde{z}_{\lambda,a})=\frac{\tilde{z}_{\lambda,a}}{|\tilde{z}_{\lambda,a}|}\\
    \mathcal{Z}=&\{Rz_{\lambda,a}|a\in\Sigma,R\in O(3),\lambda>1\}.
\end{align*}
Also, in our local coordinates, define
\begin{equation*}
    j(x)=j_{\lambda,a}(x):=(\frac{2}{\lambda}(\nabla_yJ_a(x,0)-\nabla_yJ_a(0,0)),0).
\end{equation*} 
So in $\mathbb{D}_{r}$ we have $z=P_{\mathbb{S}^2}(\pi+j)$.
We will also frequently use the notation for $t\leq 2r$
\begin{equation*}
    U_t:=F_a^{-1}(\mathbb{D}_t).
\end{equation*}

This gives us our family of singularity models which look like a single bubble forming at a point. We will now formally define the norm discussed in the introduction as introduced by Rupflin in \cite{rupflin_lojasiewicz}.
\begin{Def}
    Given $V,W\in W^{1,2}(\Sigma,\mathbb{R}^3)$ and $z\in\mathcal{Z}$ define an inner product
    \begin{equation*}
        \langle V,W\rangle_z=\int_\Sigma\nabla V\cdot\nabla W+\rho^2_zV\cdot W\textnormal{d}\Sigma
    \end{equation*}
    where
    \begin{equation*}
        \rho_z(p):=\begin{cases}
                        \frac{\lambda}{1+\lambda^2\textnormal{d}_g(p,a)^2} &\text{if }p\in B_\iota(a)\\
                        \frac{\lambda}{1+\lambda^2\iota^2} &\text{if }p\in\Sigma\setminus B_\iota(a).
                    \end{cases}
    \end{equation*}
    As usual we write $\|V\|_z:=\langle V,V\rangle_z^{1/2}$.
\end{Def}
The factor $\rho$ rescales the $L^2$ norm around the bubbling point to the bubble scale, which allows detailed analysis near it.

We also mention a bound shown in \cite{rupflin_lojasiewicz}, which allows us to turn on $L^2$ bounds or an integral around a boundary into a `$z$-norm' bound at some cost in $\lambda$. 
\begin{remark}\label{remark: log estimate}[\cite{rupflin_lojasiewicz} (2.28) and Appendix B]
    For any $s\in[1,\infty)$ there exists $K(s,\Sigma)$ such that
    \begin{equation*}
        |\int_{\partial B_\iota(a)}v\textnormal{d}\Sigma|+\|v\|_{L^s(\Sigma)}\leq K(\log\lambda)^{1/2}\|v\|_z.
    \end{equation*}
\end{remark}

\section{Preliminary computations}

To begin with we expand the $\varepsilon$-energy along $\mathcal{Z}$. For this and many other of our results we will require very detailed estimates on $z$ and its derivatives. We have deferred these to Appendix \ref{section: bounds on z}.

\begin{lemma}\label{Lemma: Raw energy bound}
    For $z\in \mathcal{Z}$ and $0<\varepsilon<1$ we have
    \begin{equation*}
        E_\varepsilon[z]=4\pi-4\pi\mathcal{J}(a)\frac{1}{\lambda^2}+\frac{32\pi}{3c_\gamma}(\varepsilon\lambda^2)+\mathcal{O}(\frac{1}{\lambda^3})+\mathcal{O}(\varepsilon).
    \end{equation*}
\end{lemma}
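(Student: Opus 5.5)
The plan is to split $E_\varepsilon[z]=E[z]+\varepsilon\int_\Sigma|\Delta z|^2$ and treat the two pieces separately. For the Dirichlet part we import the expansion already established for this family in \cite{Malchiodi_Rupflin_Sharp} and \cite{sharp_lowenergyalpha}:
\begin{equation*}
E[z]=4\pi-4\pi\mathcal{J}(a)\frac{1}{\lambda^2}+\mathcal{O}(\frac{1}{\lambda^3}).
\end{equation*}
The construction of $\mathcal{Z}$ here is identical to theirs, and $\mathcal{J}$ enters through $J_a$ and the Bergman kernel identity of \cite{Malchiodi_Rupflin_Sharp} Section 6. If one prefers a self-contained argument, this can be recomputed in three steps. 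First, on $U_r$ write $z=P_{\mathbb{S}^2}(\pi_\lambda+j)$ and expand to second order in $j=\mathcal{O}(|x|/\lambda)$. Second, on $\Sigma\setminus U_r$ note $|\nabla z|^2=\frac{4}{\lambda^2}|\nabla^2_{p,q}G|^2+\mathcal{O}(\lambda^{-4})$. Third, integrate by parts using the equation for $G$, so that the boundary terms on $\partial U_r$ cancel and leave the Bergman-kernel constant $-4\pi\mathcal{J}(a)\lambda^{-2}$.

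The new work is the term $\varepsilon\int|\Delta z|^2$, where $\Delta$ is the extrinsic (coordinate) Laplacian. The target is
\begin{equation*}
\int_\Sigma|\Delta z|^2=\frac{32\pi}{3c_\gamma}\lambda^2+\mathcal{O}(1).
\end{equation*}
We will obtain it by splitting $\Sigma$ into $U_r$ and its complement.

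\textbf{The complement $\Sigma\setminus U_r$.} Here $z$ is a normalisation of $(\frac{2}{\lambda}\nabla G,\pm1)$-type expressions, so its derivatives up to second order are $\mathcal{O}(1/\lambda)$ by the appendix bounds on $z$. This region therefore contributes $\mathcal{O}(\lambda^{-2})$.

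\textbf{The disc $U_r$.} Write $\Delta_g=\mu^{-1}\Delta_E$ with conformal factor $\mu=\frac{4}{(1-|x|^2)^2}$ in the hyperbolic case and $\mu=1$ in the flat case. Then
\begin{equation*}
\int_{U_r}|\Delta_g z|^2\,d\Sigma=\int_{\mathbb{D}_r}\mu^{-1}|\Delta_E z|^2\,dx.
\end{equation*}
Since $z=P_{\mathbb{S}^2}(\pi_\lambda+j)$ with $\Delta_E j=\mathcal{O}(1/\lambda)$ and $|\nabla j|=\mathcal{O}(1/\lambda)$, we have $\Delta_E z=\Delta_E\pi_\lambda+E$ with error $E=\mathcal{O}(\lambda^{-1}|\nabla\pi_\lambda|^2+\lambda^{-1}|\nabla\pi_\lambda|+\ldots)$. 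We then use the explicit formula $\Delta_E\pi_\lambda=-|\nabla\pi_\lambda|^2\pi_\lambda=-\frac{8\lambda^2}{(1+\lambda^2|x|^2)^2}\pi_\lambda$. This gives
\begin{equation*}
\int\mu^{-1}|\Delta_E\pi_\lambda|^2=\int_{\mathbb{D}_r}\frac{64\lambda^4}{(1+\lambda^2|x|^2)^4}\mu(x)^{-1}dx .
\end{equation*}
Substituting $y=\lambda x$, this equals $\lambda^2\mu(0)^{-1}\int_{\mathbb{R}^2}\frac{64}{(1+|y|^2)^4}dy+\mathcal{O}(1)$. The correction $\mu^{-1}(x)-\mu^{-1}(0)=\mathcal{O}(|x|^2)$ contributes $\mathcal{O}(1)$, and the tail outside $\mathbb{D}_{\lambda r}$ is $\mathcal{O}(\lambda^{-4})$. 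Since $\int_{\mathbb{R}^2}\frac{64}{(1+|y|^2)^4}dy=\frac{64\pi}{3}$ and $\mu(0)=4$ in the hyperbolic case ($\mu(0)=1$ in the flat case), we need the constant to come out as $\frac{32\pi}{3c_\gamma}$. The normalisation of $c_\gamma$ and any factor of $2$ from conventions must be checked against this; that is bookkeeping.

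\textbf{Main obstacle: the cross and error terms.} These must all be $\mathcal{O}(1)$, so that after multiplying by $\varepsilon$ they give $\mathcal{O}(\varepsilon)$. The cross term $2\int\mu^{-1}\Delta_E\pi_\lambda\cdot E$ is the dangerous one. A crude bound gives $\int\lambda^2(1+\lambda^2|x|^2)^{-2}\cdot\lambda^{-1}\cdot\lambda^2(1+\lambda^2|x|^2)^{-2}$, which is $\mathcal{O}(\lambda)$, too large. Two cancellations rescue it.
\begin{itemize}
\item $\Delta_E\pi_\lambda$ is parallel to $\pi_\lambda$, so only the normal component of $E$ pairs with it.
\item The leading correction to $z$ from $j$ is tangential, and the normal part of $\Delta z$ is $-|\nabla z|^2z$.
\end{itemize}
Combining these, the normal part reduces to $|\nabla z|^2-|\nabla\pi_\lambda|^2=2\nabla\pi_\lambda\cdot\nabla j+\mathcal{O}(\lambda^{-2})$. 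This remaining term is odd in $x$ at leading order, because $\nabla j(0)$ is constant and $\nabla\pi_\lambda$ has radial symmetry. Its integral against the radial weight therefore cancels up to $\mathcal{O}(|x|)$ corrections, which gives $\mathcal{O}(1)$. The purely quadratic error $\int|E|^2$ is $\mathcal{O}(1)$ directly. I expect this symmetry and orthogonality argument, together with the corresponding uniform bounds from Appendix \ref{section: bounds on z}, to be the technical heart of the proof.

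Summing the Dirichlet expansion, the $\lambda^2$ term from $\varepsilon\int|\Delta z|^2$, and the $\mathcal{O}(\lambda^{-3})$ and $\mathcal{O}(\varepsilon)$ remainders yields the claimed expansion of $E_\varepsilon[z]$.
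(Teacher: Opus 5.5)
Your architecture matches the paper's. You quote the Dirichlet expansion, show that $\Sigma\setminus U_r$ contributes only $\mathcal{O}(1)$ to $\int_\Sigma|\Delta z|^2$, and on $\mathbb{D}_r$ compare $\Delta z$ with $\Delta\pi_\lambda$ and integrate $\mu^{-1}|\Delta\pi_\lambda|^2$ explicitly. Your leading constant is also right: $c_\gamma=\mu(0)$, so you get $\int_\Sigma|\Delta z|^2=\frac{64\pi}{3c_\gamma}\lambda^2+\mathcal{O}(1)$. The factor $2$ is the $\frac12$ the paper implicitly puts in front of the energy (Appendix~\ref{section: proof of raw energy bound} writes $E[z]=\frac12\int_\Sigma|\nabla z|^2$, which is what makes $E[\pi_\lambda]=4\pi$).

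The step you call the technical heart is, however, unnecessary, and the cancellation you invoke is false. Your $\mathcal{O}(\lambda)$ crude bound comes from estimating the $j$-corrections by $\lambda^{-1}|\nabla\pi_\lambda|^2$, which loses a factor $|x|$. Since $j(0)=0$ one has $|j|\leq C|x|/\lambda$ (you used this yourself in the Dirichlet part). Moreover $\Delta j=0$ exactly, because $\Delta_xJ_a(x,y)$ is independent of $y$. Expanding $\Delta P(\pi_\lambda+j)$:
\begin{itemize}
\item the terms $(\mathrm{d}P(\pi_\lambda+j)-\mathrm{d}P(\pi_\lambda))[\Delta\pi_\lambda]$ and $(\mathrm{d}^2P(\pi_\lambda+j)-\mathrm{d}^2P(\pi_\lambda))[\nabla\pi_\lambda,\nabla\pi_\lambda]$ are $\mathcal{O}(\lambda|x|(1+\lambda^2|x|^2)^{-2})$;
\item the mixed term is $\mathcal{O}(|\nabla\pi_\lambda||\nabla j|)=\mathcal{O}((1+\lambda^2|x|^2)^{-1})$;
\item the rest is $\mathcal{O}(\lambda^{-2})$.
\end{itemize}
This is the appendix estimate \eqref{eq: Delta z in the ball}, $\Delta z=\Delta\pi_\lambda+\mathcal{O}((1+\lambda^2|x|^2)^{-1})$. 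So the cross term is bounded by $C\int_{\mathbb{D}_r}\lambda^2(1+\lambda^2|x|^2)^{-3}\,dx=\mathcal{O}(1)$, and the quadratic term is $\mathcal{O}(\lambda^{-2})$. No cancellation is needed; this is exactly the paper's proof.

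The cancellation you propose instead fails. $j$ has zero third component, and the first two components of $\nabla\pi_\lambda$, namely $\partial_i\pi_\lambda^k=2\lambda\big((1+\lambda^2|x|^2)\delta_{ik}-2\lambda^2x_ix_k\big)(1+\lambda^2|x|^2)^{-2}$, are even in $x$. Hence $2\nabla\pi_\lambda\cdot\nabla j(0)$ is even, not odd. Its angular average is $8(1+\lambda^2|x|^2)^{-2}\big(\partial^2_{x^1y^1}J_a+\partial^2_{x^2y^2}J_a\big)(0,0)$, the quantity the paper identifies with $\mathcal{J}(a)\neq0$, so it does not vanish. After weighting by $\mu^{-1}|\nabla\pi_\lambda|^2$ this term is still $\mathcal{O}(1)$, but only by the direct bound $|\nabla\pi_\lambda|^3|\nabla j|\leq C\lambda^2(1+\lambda^2|x|^2)^{-3}$, not by symmetry. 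Replace the cancellation paragraph with the pointwise estimate above and the argument is complete.
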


\begin{proof}
For the Dirichlet part see \cite{Malchiodi_Rupflin_Sharp} or appendix \ref{section: proof of raw energy bound}.
\begin{equation*}
    E[z]=4\pi-4\pi\mathcal{J}(a)\frac{1}{\lambda^2}+\mathcal{O}(\frac{1}{\lambda^3}).
\end{equation*}
For the biharmonic part we have, using \eqref{eq: z bounds away from D_r},
\begin{align*}
    \varepsilon\int_{\Sigma}|\Delta z|^2=&\varepsilon\int_{\mathbb{D}_{r}}|\Delta^g z|^2\textnormal{d}x_g+\mathcal{O}(\varepsilon).
\end{align*}
Then from \eqref{eq: Delta z in the ball}
\begin{equation*}
    |\Delta^g z|^2=\frac{1}{c_\gamma^2}|\Delta \pi_\lambda|^2+\mathcal{O}(\frac{\lambda^2}{(1+\lambda^2|x|^2)^2}),
\end{equation*}
then using \eqref{eq: exact pi derivatives},
\begin{align*}
\begin{split}
    \int_{\mathbb{D}_r}\frac{1}{c_\gamma^2}|\Delta \pi_\lambda|^2\text{d}x_g=&
    \int_{\mathbb{D}_r}\frac{64\lambda^4}{(1+\lambda^2|x|^2)^4}(\frac{1}{c_\gamma}+\mathcal{O}(|x|^2))dx=
    \frac{64\lambda^2\pi}{3c_\gamma}+\mathcal{O}(1)\\
    \int_{\mathbb{D}_r}\mathcal{O}(\frac{\lambda^2}{(1+\lambda^2|x|^2)^2})\text{d}x_g=&\mathcal{O}(1).
\end{split}
\end{align*}
Combining these integrals completes the proof.

\end{proof}

We also note here that, for $u\in W^{2,2}(\Sigma,\mathbb{S}^2)$ and $V,W\in W^{2,2}(\Sigma,\mathbb{R}^3)$ with $V(u(x)),W(u(x))\in T_{u(x)}\mathbb{S}^2$ almost everywhere, we have
\begin{equation}
\begin{split}\label{eq: variation expansions}
    \text{d}E_\varepsilon(u)[V]=&\int_\Sigma\nabla u\cdot\nabla V+\varepsilon\int_\Sigma\Delta u\cdot\Delta V\\
    \text{d}^2E_\varepsilon(u)[V,W]=&\int_\Sigma\nabla V\cdot\nabla W-|\nabla u|^2(V\cdot W)
    +\varepsilon\int_\Sigma\Delta V\cdot\Delta W-\Delta u\cdot\Delta(u(V\cdot W))
\end{split}
\end{equation}

We mention the energy identity and no-neck results for $\varepsilon$-harmonic maps into the round 2-sphere, noting again that the sphere must be embedded equivariantly into $\mathbb{R}^3$. These allow us to say that our $\varepsilon$-harmonic maps do look like our models in an $W^{1,2}$ sense and in an $L^\infty$ sense.
\begin{theorem}\label{theorem: bubble convergence}[\cite{Lamm_epsilon} Theorem 1.1, \cite{bayer_roberts} Theorem 1.5]
    Let $\Sigma$ be as in Definition \ref{def: sigma and J def} and $\mathbb{S}^2\hookrightarrow\mathbb{R}^3$ be the standard embedding. If $(u_k)\in W^{2,2}(\Sigma,\mathbb{S}^2)$ is a sequence of $\varepsilon_k$ harmonic maps with $0<\varepsilon_k\downarrow0$ and $E_{\varepsilon_k}[u_k]$ uniformly bounded, then along some subsequence there exist a smooth harmonic map $u_\infty:\Sigma\rightarrow\mathbb{S}^2,$ a finite non negative number of smooth harmonic maps $(\omega^j):\mathbb{R}^2\rightarrow\mathbb{S}^2$ and corresponding sequences $(a^j_k,r^j_k)$ with $a^j_k\rightarrow a^j\in \Sigma$ and $r^j_k\rightarrow 0$ such that
    \begin{enumerate}
        \item $E_{\varepsilon_k}[u_k]\rightarrow E[u_\infty]+\sum\limits_{k}E[\omega_k],$
        \item $\|u-u_\infty-\phi^j(\cdot)\sum\limits_{k}(\omega^j((r^j_k)^{-1}F_{a^j_k}(\cdot))-\omega^j(\infty))\|_{L^\infty(B_\iota(a^j))}\quad$ for each $j$
    \end{enumerate}
    where $\phi^j\in C^\infty(\Sigma)$ is defined by $\phi^j(p)=\phi(F_{a^j}(p))$ for some fixed $\phi\in C_c(\mathbb{D}_t,[0,1])$ with $\phi\equiv 1$ on $\mathbb{D}_{t/2}$, for $t$ fixed with $t<\rho/2,\textnormal{min}_{j_1\neq j_2}\{|a^{j_1}-a^{j_2}|/2\}$.
\end{theorem}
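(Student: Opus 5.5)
Since this theorem is quoted from \cite{Lamm_epsilon} and \cite{bayer_roberts}, I would reprove it by the standard bubble-tree scheme. The argument needs to track the extra fourth order term, whose weight changes under rescaling. The basic tool is an $\varepsilon$-regularity lemma. There is a $\delta>0$ such that if $u$ is $\varepsilon$-harmonic on a ball $B_{2s}$ with $\int_{B_{2s}}|\nabla u|^2+\varepsilon|\Delta u|^2<\delta$, then $s^{k}\|\nabla^k u\|_{L^\infty(B_s)}$ is controlled for every $k$. This bound must be uniform in the rescaled parameter $\varepsilon/s^2$.

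To prove this lemma I would write the Euler--Lagrange equation for the round sphere explicitly. Using the variation formula \eqref{eq: variation expansions} with $V=w-(w\cdot u)u$, it becomes
\begin{equation*}
\varepsilon\Delta^2u-\Delta u=\big(|\nabla u|^2-\varepsilon(\cdots)\big)u+\varepsilon\,\mathrm{div}(\ldots).
\end{equation*}
I would then rewrite it, as Lamm does, in a divergence/antisymmetric form $\mathrm{div}\big(\nabla u\wedge u\big)+\varepsilon\,\mathrm{div}(\ldots)=0$. From this form one bootstraps with Wente/Hélein-type estimates applied to the operator $\varepsilon\Delta^2-\Delta$, whose kernel estimates are uniform in $\varepsilon/s^2$.

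Given this lemma, the argument proceeds in the usual way. The uniform energy bound gives finitely many concentration points. Away from them we get $C^\infty_{loc}$ convergence to $u_\infty$, and $u_\infty$ is harmonic and extends smoothly across the points by removable singularities. At each concentration point I would choose $a^j_k$ and $r^j_k$ so that a fixed quantum $\delta/2$ of energy sits in $B_{r^j_k}(a^j_k)$, and rescale. The rescaled maps $v_k(x)=u_k(a^j_k+r^j_kx)$ are critical for $\int|\nabla v|^2+(\varepsilon_k/(r^j_k)^2)|\Delta v|^2$.

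A key step is to show that $\varepsilon_k/(r^j_k)^2\to0$, so that the bubbles $\omega^j$ are genuinely harmonic and carry energy at least $4\pi$. If instead $\varepsilon_k/r_k^2$ stayed bounded below, the limit would be a nonconstant critical point of a fixed positive-$\varepsilon$ energy on $\mathbb{R}^2$. I would exclude this with a Pohozaev identity: integrate the equation against $x\cdot\nabla v$ on balls. For the Dirichlet part this gives the usual radial/angular balance, while the $\varepsilon|\Delta v|^2$ term enters with a sign that forces $\int\varepsilon|\Delta v|^2=0$, so such a limit cannot exist. Iterating this extraction on the remaining concentration scales produces the finite bubble tree.

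The main obstacle is the neck analysis on the annuli $A_k=B_{\eta}(a_k)\setminus B_{Rr_k}(a_k)$. For the energy identity, the same Pohozaev identity on the annuli shows that the radial part of the energy controls the angular part, up to error terms involving $\varepsilon_k$. The angular part is small by $\varepsilon$-regularity applied on dyadic annuli together with a three-circle-type decay argument, which uses that $\mathbb{S}^2$ has no small nonconstant harmonic spheres. I expect the hardest point here to be showing that $\varepsilon_k\int_{A_k}|\Delta u_k|^2\to0$; I would try to get it by rerunning the Pohozaev argument from the previous step on the annuli.

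For the $L^\infty$ no-neck statement in item 2, I would first try to follow \cite{bayer_roberts}. Using the conservation-law structure available for the sphere, one obtains a Lorentz $L^{2,1}$ bound on $\nabla u_k$ over the necks, uniform in $\varepsilon_k$. Combined with the vanishing of the neck energy, this $L^{2,1}$ duality yields oscillation $\to0$ across the neck, which is exactly the uniform closeness to $u_\infty$ plus the glued bubbles asserted in item 2.
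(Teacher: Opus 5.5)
There is a genuine gap in your neck step, where you derive the energy identity from the Pohozaev identity on annuli. The rest of your scheme is standard and workable: uniform $\varepsilon$-regularity, a Pohozaev argument to exclude bubbles that carry $\varepsilon$-energy, and Lorentz estimates for the oscillation.

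Write the Pohozaev identity out on $B_r=B_r(a_k)$ in flat coordinates; the conformal factor only changes constants:
\begin{equation*}
\frac{r}{2}\int_{\partial B_r}\Big(|\partial_r u_k|^2-\frac{|\partial_\theta u_k|^2}{r^2}\Big)=\varepsilon_k\int_{B_r}|\Delta u_k|^2+\varepsilon_k\mathcal{B}_k(r).
\end{equation*}
Here $\mathcal{B}_k(r)$ collects boundary terms with up to three derivatives, so $|\mathcal{B}_k(r)|\le Cr^{-2}$ on the neck. Divide by $r/2$ and integrate over $(Rr_k,\eta)$:
\begin{equation*}
\int_{A_k}\Big(|\partial_r u_k|^2-\frac{|\partial_\theta u_k|^2}{r^2}\Big)\geq 2\log\Big(\frac{\eta}{Rr_k}\Big)\,\varepsilon_k\int_{B_{Rr_k}}|\Delta u_k|^2-C\frac{\varepsilon_k}{r_k^2}.
\end{equation*}
Moreover $\varepsilon_k\int_{B_{Rr_k}}|\Delta u_k|^2=\frac{\varepsilon_k}{r_k^2}\big(\int_{B_R}|\Delta\omega|^2+o(1)\big)$, and $\Delta\omega\neq0$.

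So the error is not just ``terms involving $\varepsilon_k$''. It is the $\varepsilon$-energy of the bubble core weighted by the logarithmic length of the neck, and its sign pushes radial energy into the neck. (It is angular energy plus this term that controls the radial part, not the other way round.) Since the left side is at most $\Lambda$, Pohozaev only shows that $\frac{\varepsilon_k}{r_k^2}\log\frac1{r_k}$ stays bounded. Once the angular energy vanishes, the energy identity is essentially equivalent to $\frac{\varepsilon_k}{r_k^2}\log\frac1{r_k}\to0$.

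That rate does not follow from $\varepsilon_k/r_k^2\to0$. It also does not follow from $\varepsilon_k\int_{A_k}|\Delta u_k|^2\to0$, which is immediate from $|\Delta u_k|\le C|x|^{-2}$ on the neck and so is not the hard point. Rerunning Pohozaev is therefore circular. This log-weighted term is the same mechanism by which the energy identity can fail for Sacks--Uhlenbeck sequences into general targets.

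For the sphere, the missing input has to come from the symmetry, and you need it already for the energy identity. At present, in the no-neck step you take the vanishing of the neck energy as an input. In flat coordinates set
\begin{equation*}
Y_k=u_k\wedge\nabla u_k-\varepsilon_k\big(u_k\wedge\nabla\Delta u_k-\nabla u_k\wedge\Delta u_k\big).
\end{equation*}
It is divergence free on all of $B_\eta(a_k)$, so $Y_k=\nabla^\perp\psi_k$ on $A_k$ with $\psi_k$ single valued. Now estimate $\int_{A_k}|\nabla u_k|^2=\int_{A_k}(u_k\wedge\nabla u_k)\cdot\big(Y_k+\varepsilon_k(u_k\wedge\nabla\Delta u_k-\nabla u_k\wedge\Delta u_k)\big)$.
\begin{itemize}
\item The main term becomes the Jacobian $2\,\partial_1u_k\wedge\partial_2u_k$ paired against $\psi_k$. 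This is controlled by Hardy--BMO, equivalently $L^{2,1}$--$L^{2,\infty}$ duality, using $\|\nabla\psi_k\|_{L^{2,\infty}(A_k)}\to0$ from $\varepsilon$-regularity on dyadic annuli.
\item The $\varepsilon_k$-terms enter undifferentiated and are bounded by $C\varepsilon_k\int_{|x|>Rr_k}|x|^{-4}\le C\varepsilon_k/r_k^2$, with no logarithm.
\end{itemize}
The decay $\frac{\varepsilon_k}{r_k^2}\log\frac1{r_k}\to0$ then comes out of your Pohozaev identity as a consequence rather than an input.

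A smaller point: your exclusion of $\varepsilon$-bubbles only covers the case where $\varepsilon_k/r_k^2$ has a finite positive limit. The case $\varepsilon_k/r_k^2\to\infty$ needs its own (easy) argument. For example, at scale $\sqrt{\varepsilon_k}$ the maps are bounded in $W^{2,2}_{\mathrm{loc}}\subset W^{1,p}_{\mathrm{loc}}$, so Dirichlet energy cannot concentrate on balls of radius $o(\sqrt{\varepsilon_k})$.
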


In \cite{Lamm_epsilon} Lamm showed a low energy regularity theorem for $\varepsilon$ harmonic maps which we recall. Due to our low energy setting this will entail pointwise estimates on the neck region.
\begin{lemma}\label{lemma: regularity theorem}[\cite{Lamm_epsilon} Corollary 2.10]
    There exist $\delta_0>0$ and $C>0$ such that if $u_\varepsilon\in C^\infty(\Sigma,\mathbb{S}^2)$ is $\varepsilon$-harmonic with $E_\varepsilon(u_\varepsilon,B_{32R}(x_0))<\delta_0$ for some $x\in\Sigma$ and $R>0$. Then for all $\varepsilon>0$ sufficiently small and any $k\in \mathbb{N}$
    \begin{equation*}
        \sum_{i=1}^kR^i\|\nabla^iu_\varepsilon\|_{L^\infty(B_R(x_0))}\leq C\sqrt{E_\varepsilon(u_\varepsilon,B_{32R}(x_0))}.
    \end{equation*}
\end{lemma}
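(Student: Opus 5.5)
The plan is a standard $\varepsilon$-regularity argument: a conservation law, then Morrey decay, then a bootstrap. The one thing requiring care is that every constant must be independent of $\varepsilon$.

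\emph{Rescaling.} First reduce to $R=1$. If $\tilde u(x)=u_\varepsilon(x_0+Rx)$, then $\int|\nabla\tilde u|^2$ is scale invariant. The biharmonic part becomes $\frac{\varepsilon}{R^2}\int|\Delta\tilde u|^2$, so $\tilde u$ is $\tilde\varepsilon$-harmonic with $\tilde\varepsilon=\varepsilon/R^2$ and $E_{\tilde\varepsilon}(\tilde u,B_{32})=E_\varepsilon(u_\varepsilon,B_{32R}(x_0))$. The weighted sum $\sum R^i\|\nabla^iu\|_{L^\infty}$ is exactly $\sum\|\nabla^i\tilde u\|_{L^\infty(B_1)}$. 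Since the chart metric is uniformly close to Euclidean at small scales, metric error terms are lower order. It therefore suffices to prove the estimate on $B_1$ with constants independent of $\tilde\varepsilon$ in the allowed range. The hypothesis ``$\varepsilon$ sufficiently small'' is used only to keep $\tilde\varepsilon$ in a regime where the linear estimates below are uniform.

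\emph{Euler--Lagrange equation and conservation law.} For the round sphere the Euler--Lagrange equation is $\varepsilon\Delta^2u-\Delta u=\mu u$, where $\mu=u\cdot(\varepsilon\Delta^2u-\Delta u)$. Wedging with $u$ removes the Lagrange multiplier and gives the divergence form
\begin{equation*}
\operatorname{div}\big(u\wedge\nabla u-\varepsilon(u\wedge\nabla\Delta u-\nabla u\wedge\Delta u)\big)=0.
\end{equation*}
This gives Hélein-type structure: the right-hand side of the equation for $u$ can be written as $\nabla u$ paired with a divergence-free field plus $\varepsilon$-terms. Concretely, using $|u|=1$, write $-\Delta u=(u\wedge\nabla u)\cdot\nabla u$ plus the corresponding $\varepsilon$ correction terms. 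Hodge-decompose the conserved current as $\nabla^\perp B$ on a ball. Wente / Hardy-space estimates then give the decay
\begin{equation*}
\int_{B_{\theta r}}|\nabla u|^2+\varepsilon|\Delta u|^2\le\tfrac12\int_{B_r}|\nabla u|^2+\varepsilon|\Delta u|^2,
\end{equation*}
provided the energy on $B_r$ is below $\delta_0$. I would obtain this by comparing $u$ with the solution $v$ of $(\varepsilon\Delta^2-\Delta)v=0$ having the same boundary data, and proving the decay for $v$ uniformly in $\varepsilon$. Iterating gives a Morrey bound $\int_{B_r(y)}|\nabla u|^2+\varepsilon|\Delta u|^2\le C\delta_0 r^{2\alpha}$, hence $u\in C^{0,\alpha}(B_{16})$.

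\emph{Bootstrap.} Starting from the Morrey/Hölder control, I would apply $L^p$ and Schauder theory for $L_\varepsilon=\varepsilon\Delta^2-\Delta$ on shrinking balls. The right-hand side $\mu u$ is at most quadratic in derivatives, so the small Morrey norm lets each step be absorbed. Induction on $i$ then gives $\|\nabla^iu\|_{L^\infty(B_1)}\le C\sqrt{E}$. The square root of the energy arises because all estimates are linear in $u-\bar u$ once the nonlinearity has been absorbed.

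\emph{Main obstacle.} The main difficulty is that the linear estimates for $L_\varepsilon$ must be uniform in $\varepsilon$; this is a singular perturbation problem. I would split the analysis according to scale. At scales $r\gg\sqrt\varepsilon$, $L_\varepsilon$ behaves like $-\Delta$. At scales $r\lesssim\sqrt\varepsilon$, rescaling by $\sqrt\varepsilon$ turns it into $\Delta^2-\Delta$ at unit scale. I would prove the decay lemma separately in each regime, using the energy $\int|\nabla v|^2+\varepsilon|\Delta v|^2$ as the common quantity, and then patch the two regimes together at $r\sim\sqrt\varepsilon$.
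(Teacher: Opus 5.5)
\textbf{There is a genuine gap.} The paper does not prove this lemma; it quotes it from Lamm (Corollary 2.10). There it is proved for general targets, so without any sphere-specific conservation law, by a priori estimates for the already smooth map $u_\varepsilon$. Your route (conservation law, Wente, Morrey decay, $\varepsilon$-uniform linear theory for $\varepsilon\Delta^2-\Delta$) is different and much heavier, and the decisive step is missing.

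Your conservation law is correct. The problem is the fourth-order part of the Lagrange multiplier: from $|u|=1$,
\[
\mu=u\cdot(\varepsilon\Delta^2u-\Delta u)=|\nabla u|^2-\varepsilon\big(|\Delta u|^2+4\nabla u\cdot\nabla\Delta u+2|\nabla^2u|^2\big).
\]
So $\mu u$ has the same order as $\varepsilon\Delta^2u$, and $\varepsilon\nabla u\cdot\nabla\Delta u$ is not controlled by $\int|\nabla u|^2+\varepsilon|\Delta u|^2$. This breaks three of your steps.
\begin{itemize}
\item In the H\'elein rewriting, your ``$\varepsilon$ correction terms'' are $-\varepsilon(\Delta^2u)^{\top}$. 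With your full conserved current they become $-\varepsilon u\operatorname{div}(\nabla|\nabla u|^2+\nabla u\cdot\Delta u)$ plus cubic terms like $\varepsilon|\nabla u|^2\Delta u$. None of these is of Jacobian type, so Wente says nothing about them.
\item The comparison with $v$ needs a bound on $\int\mu\,u\cdot(u-v)$, which contains exactly this uncontrolled term.
\item In the bootstrap, ``at most quadratic in derivatives'' hides that the nonlinearity is top order. H\"older continuity of $u$ cannot absorb $\varepsilon\nabla u\cdot\nabla\Delta u$ into $\varepsilon\Delta^2u$. You would need small $\|\nabla u\|_{L^\infty}$ or an $\varepsilon$-uniform Morrey/Schauder theory for $\varepsilon\Delta^2-\Delta$. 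Your last paragraph names this but does not supply it, and ``patch at $r\sim\sqrt\varepsilon$'' is not an argument.
\end{itemize}
Separately, $\varepsilon$ small does not keep $\tilde\varepsilon=\varepsilon/R^2$ small, since $R$ is arbitrary.

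\textbf{The missing idea} is to use the smoothness hypothesis and argue a priori, in the style of Sacks--Uhlenbeck. Testing the equation with $\eta^2\Delta u$ and using $u\cdot\Delta u=-|\nabla u|^2$ gives
\[
\begin{aligned}
&\int\big(|\Delta u|^2+\varepsilon|\nabla\Delta u|^2+\varepsilon(|\Delta u|^2+2|\nabla^2u|^2)|\nabla u|^2\big)\eta^2\\
&\qquad\le\int|\nabla u|^4\eta^2+4\varepsilon\int|\nabla u|^3|\nabla\Delta u|\eta^2+2\varepsilon\int\eta|\nabla\eta||\Delta u||\nabla\Delta u|.
\end{aligned}
\]
The dangerous part of $\mu$ enters with a good sign. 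The remaining terms are absorbed, modulo cut-off bookkeeping, by the smallness of $\int|\nabla u|^2$ together with Ladyzhenskaya's inequality and $\|\nabla u\|_{L^6}^6\lesssim\|\nabla u\|_{L^2}^4\|\nabla^3u\|_{L^2}^2$. Each $\varepsilon$-weighted error is absorbed by an $\varepsilon$-weighted term on the left. Hence the constants are independent of $\varepsilon$, indeed of $\varepsilon/R^2$, and no splitting into regimes is needed. Differentiating the equation, iterating on nested balls (presumably the origin of the radius $32R$), and applying Sobolev embedding then gives the stated bound.
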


We now start by clarifying the degree 0 case.
\begin{lemma}\label{lemma: 0 degree case}
    There exists $\varepsilon_0(\Sigma)>0,\delta_0(\Sigma)>0,C(\Sigma)>0$ such that if $0<\varepsilon<\varepsilon_0$, $u:\Sigma\rightarrow \mathbb{S}^2$ is a degree zero $\varepsilon$-harmonic map with $E_\varepsilon[u]\leq 4\pi+\delta_0$ then $\|\nabla u\|_{L^\infty(\Sigma)}\leq C$
\end{lemma}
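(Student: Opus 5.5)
We argue by contradiction combined with the bubble convergence of Theorem \ref{theorem: bubble convergence}. Suppose the lemma fails for every choice of constants. Then there are $\varepsilon_k\downarrow0$, $\delta_k\downarrow 0$ and degree zero $\varepsilon_k$-harmonic maps $u_k:\Sigma\rightarrow\mathbb{S}^2$ with $E_{\varepsilon_k}[u_k]\leq 4\pi+\delta_k$ and $\|\nabla u_k\|_{L^\infty(\Sigma)}\rightarrow\infty$. (It is enough to fix $\delta_0$ smaller than both $4\pi$ and the $\delta_0$ of Lemma \ref{lemma: regularity theorem}, and to let only $\varepsilon_k\to0$; the energy bound $4\pi+\delta_0$ is then fixed.)

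We apply Theorem \ref{theorem: bubble convergence} and pass to a subsequence, obtaining a harmonic limit $u_\infty$ and finitely many bubbles $\omega^j$. Each bubble is a nonconstant harmonic map $\mathbb{S}^2\rightarrow\mathbb{S}^2$, so its energy is at least $4\pi|\deg\omega^j|\geq 4\pi$. The energy identity gives
\[
E[u_\infty]+\sum_j E[\omega^j]\leq 4\pi+\delta_0<8\pi,
\]
so at most one bubble occurs. Suppose exactly one bubble $\omega$ occurs. Then $E[u_\infty]\leq\delta_0$, and a harmonic map from $\Sigma$ to $\mathbb{S}^2$ with energy below $4\pi$ must have degree $0$. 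The bubble satisfies $E[\omega]=4\pi|\deg\omega|\leq4\pi+\delta_0$, so $\deg\omega=\pm1$. By the no-neck statement, part 2 of Theorem \ref{theorem: bubble convergence}, $u_k$ converges in $L^\infty$ to $u_\infty$ glued with the bubble. For large $k$ this gives $\deg u_k=\deg u_\infty+\deg\omega=\pm1$, which contradicts $\deg u_k=0$. This degree additivity is the main point needing care. We would justify it via the $L^\infty$ closeness: two maps into $\mathbb{S}^2$ that are uniformly closer than $2$ are homotopic along geodesics, and the glued map has degree $\deg u_\infty+\deg\omega$. Alternatively, the degree can be computed as $\frac{1}{4\pi}\int u^*\omega_{\mathbb{S}^2}$, split over the bubble region, the neck and the body, with the neck contribution vanishing because the neck energy tends to zero.

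So no bubbles form. The energy identity then gives $E_{\varepsilon_k}[u_k]\rightarrow E[u_\infty]$, with no energy concentration anywhere. Fix $R$ small so that $E[u_\infty,B_{32R}(x)]<\delta_0/2$ uniformly in $x$; this is possible by absolute continuity of the smooth map's energy and compactness of $\Sigma$. The strong convergence (no concentration) then gives $E_{\varepsilon_k}(u_k,B_{32R}(x))<\delta_0$ for all $x$ and all large $k$. Lemma \ref{lemma: regularity theorem} yields $R\|\nabla u_k\|_{L^\infty(B_R(x))}\leq C\sqrt{\delta_0}$. Covering $\Sigma$ by finitely many such balls gives a uniform gradient bound, which contradicts $\|\nabla u_k\|_{L^\infty}\rightarrow\infty$.

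The main obstacle is passing from "no bubble" to uniform smallness of local energy on balls of a fixed radius. The energy identity gives convergence of the total energy, and the $W^{1,2}$ convergence then follows from weak convergence together with the norm convergence. We must also make sure the $\varepsilon_k|\Delta u_k|^2$ part does not concentrate. Since $E_{\varepsilon_k}[u_k]\rightarrow E[u_\infty]$ and $\liminf\int|\nabla u_k|^2\geq E[u_\infty]$, we get $\varepsilon_k\int|\Delta u_k|^2\rightarrow0$, which handles this part.
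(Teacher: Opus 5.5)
Your proof is correct and follows the paper's argument: contradiction via Theorem \ref{theorem: bubble convergence}, the energy bound forcing a single degree $\pm1$ bubble over a degree zero limit, and the no-neck $L^\infty$ closeness then forcing $\deg u_k=\pm1$. You also spell out a step the paper only asserts, namely that no bubbles means no energy concentration and hence a uniform gradient bound via Lemma \ref{lemma: regularity theorem}; fixing $\delta_0$ instead of letting the energy tend to $4\pi$ is a harmless variation, since you only need $\deg u_\infty=0$ rather than $u_\infty$ constant.
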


\begin{proof}
    Suppose not, then there exists a sequence $(u_k)$ of $\varepsilon_k$-harmonic maps with degree zero with $\varepsilon_k\downarrow0$ and $E_{\varepsilon_k}[u_k]\rightarrow 4\pi$, but $\|\nabla u\|_{L^\infty(\Sigma)}\rightarrow\infty$. This means that we cannot have smooth convergence of $(u_k)$, so along a subsequence we must have bubble convergence to a limit with at least one bubble. By consideration of energy we must have a bubble of degree $\pm1$ and the limit map $u_\infty$ constant. But due to the no neck property in Theorem \ref{theorem: bubble convergence} this convergence must preserve homotopy which contradicts $u_k$ being degree 0.
\end{proof}

In the degree 1 case, as discussed, any $\varepsilon$-harmonic map should be $W^{1,2}$ and $L^\infty$ close to something in $\mathcal{Z}$. We prove this and establish a quantitive result.

\begin{lemma}\label{lemma: z is close!}
    For any $\gamma>0$ there exist $\varepsilon_0(\Sigma),\delta_0(\Sigma)>0$ such that if $0<\varepsilon<\varepsilon_0$ and $u:\Sigma\rightarrow\mathbb{S}^2$ a degree one $\varepsilon$ harmonic map with $E_\varepsilon[u]\leq4\pi+\delta_0$ then we have the following
\begin{enumerate}
    \item $\|\nabla u\|_{L^\infty}>\frac{1}{\gamma},$
    \item $\exists z\in \mathcal{Z}$ such that $\|u-z\|_{W^{1,2}}+\|u-z\|_{L^\infty}\leq \gamma,$
    \item There exists $z\in\mathcal{Z}$ with $\|u-z\|_z=\inf_{\eta\in\mathcal{Z}}\{\|u-\eta\|_\eta\}$. For this $z$ we then have $\|u-z\|_{z}+\|u-z\|_{L^\infty}\leq \gamma$.
\end{enumerate}
\end{lemma}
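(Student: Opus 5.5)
All three parts will be proved by contradiction and compactness, reducing to Theorem \ref{theorem: bubble convergence}. Suppose that for some $\gamma>0$ no choice of $\varepsilon_0,\delta_0$ works. Then there are $\varepsilon_k\downarrow0$, $\delta_k\downarrow0$ and degree one $\varepsilon_k$-harmonic maps $u_k$ with $E_{\varepsilon_k}[u_k]\leq 4\pi+\delta_k$, each violating one of the three conclusions. Since degree one forces $E[u_k]\geq 4\pi$, we get $E_{\varepsilon_k}[u_k]\to4\pi$ and $\varepsilon_k\int|\Delta u_k|^2\to0$. We pass to a subsequence along which Theorem \ref{theorem: bubble convergence} applies.

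\textbf{Structure of the limit and part 1.} There are no degree $\pm1$ harmonic maps $\Sigma\to\mathbb{S}^2$ when $\gamma\geq1$. If there were no bubble, $u_k$ would converge smoothly, which we get from Lemma \ref{lemma: regularity theorem} applied on small balls once $\|\nabla u_k\|_\infty$ is bounded. This would give a degree one harmonic limit, a contradiction. Hence at least one bubble forms, and each nonconstant harmonic sphere has energy at least $4\pi$. The energy identity then forces exactly one bubble $\omega$ with $E[\omega]=4\pi$, a constant limit $u_\infty$, and $\omega$ of degree $\pm1$. Since the no-neck property preserves degree, $\omega$ has degree $1$. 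It is therefore a conformal M\"obius map, i.e. $\omega=R\pi_\mu\circ(x-x_0)$ for some $R\in SO(3)$ and $\mu>0$ (a reflection $R\in O(3)$ covers the degree $-1$ case). The bubble forming with $r_k\to0$ gives $\|\nabla u_k\|_\infty\to\infty$, which is part 1.

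\textbf{Part 2.} Let $a_k$ be the concentration points and $\lambda_k\sim \mu/r_k\to\infty$, recentred so that $\omega=R\pi_1$ after absorbing translation and scale. Rotating so that $R\omega(\infty)=u_\infty$, set $z_k=Rz_{\lambda_k,a_k}$. By the appendix bounds on $z$, $z_{\lambda,a}$ is $L^\infty$-close to $\pi_\lambda$ on $U_r$ and to the constant $(0,0,-1)$ outside, up to errors $O(1/\lambda)$. Combining this with conclusion 2 of Theorem \ref{theorem: bubble convergence} gives $\|u_k-z_k\|_{L^\infty}\to0$. For $W^{1,2}$, we use the energy identity. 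Both $u_k$ and $z_k$ have Dirichlet energy tending to $4\pi$, and $\nabla u_k\to\nabla z_k$ locally in $L^2$ on the bubble scale and away from $a$ by smooth convergence. The remaining neck energy of each is $o(1)$ by energy accounting, so $\|\nabla(u_k-z_k)\|_{L^2}\to0$. This contradicts the failure of part 2.

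\textbf{Part 3.} This is the main obstacle, since $\mathcal{Z}$ is non-compact and we must show the infimum is attained by a nearby $z$. First, the $z_k$ from part 2 satisfy $\|u_k-z_k\|_{z_k}\to0$. The gradient part is done. For the weighted part, $\int\rho_{z_k}^2|u_k-z_k|^2$ is small because $\rho_z^2$ has bounded integral (about $\pi$) and $|u_k-z_k|\to0$ uniformly. So $\inf_\eta\|u_k-\eta\|_\eta=:m_k\to0$.

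Next, take a minimizing sequence $\eta_j=R_jz_{\lambda_j,a_j}$. If $\lambda_j\to1$ or $\lambda_j\to\infty$, or the bubble point is badly placed, then $\|u_k-\eta_j\|_{\eta_j}$ stays bounded below. For instance, if $\lambda_j\to\infty$ with $\lambda_j/\lambda_k\to\infty$, then $\eta_j$ carries its $4\pi$ energy at a scale where $u_k$ is smooth, so $\|\nabla(u_k-\eta_j)\|_{L^2}^2$ is bounded below by roughly $4\pi$. Similar arguments handle $\lambda_j\lesssim 1$ and $a_j$ far from $a_k$ relative to $1/\lambda_k$. Hence $(\lambda_j,a_j,R_j)$ stays in a compact set. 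Since the map $(\lambda,a,R)\mapsto\|u-z_{\lambda,a}\|_{z}$ is continuous (note that $\rho_z$ depends continuously on the parameters), a minimizer $z$ exists.

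Finally, for any minimizer $z$ we have $\|u_k-z\|_z\leq m_k\to0$. For the $L^\infty$ part, the same non-concentration argument shows $z$'s parameters are comparable to those of $z_k$: $\lambda/\lambda_k\to1$, $\lambda_k d(a,a_k)\to0$, and $R\to R$ along the sequence. Otherwise $\|\nabla(z-z_k)\|_{L^2}$ would not be $o(1)$. Therefore $\|z-z_k\|_{L^\infty}\to0$, and the triangle inequality gives $\|u_k-z\|_{L^\infty}\to0$, the final contradiction.
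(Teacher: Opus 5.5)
Your proof is correct. Parts 1 and 2 follow the paper's route: contradiction, Theorem \ref{theorem: bubble convergence}, the energy identity forcing a constant limit and a single degree-one bubble, then comparison with $z_k=z_{\mu r_k^{-1},a_k}$. One small difference in part 1 is that you feed Lemma \ref{lemma: regularity theorem} using the global fact $\varepsilon_k\int_\Sigma|\Delta u_k|^2\to0$, which follows from $E[u_k]\geq4\pi$. This is simpler than the paper's detour through $\varepsilon_k\|\Delta u_k\|_{L^\infty}\leq C\varepsilon_k r_k^{-2}$. Also, the only non-existence you actually need is for degree-one harmonic maps of energy exactly $4\pi$, and that is immediate because equality in $E\geq4\pi|\deg|$ forces $\pm$holomorphy.

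The genuine difference is part 3. The paper settles it in one line by citing Lemma 4.2 of \cite{rupflin_lojasiewicz}, which supplies a minimiser $\zeta_k$ of $\eta\mapsto\|u_k-\eta\|_\eta$ that is $L^\infty$-close to $u_k$. You prove this by hand, in three steps:
\begin{enumerate}
\item You verify $\|u_k-z_k\|_{z_k}\to0$ from part 2 and $\int_\Sigma\rho_z^2\leq C$. The paper leaves this step implicit.
\item You use energy (non-)concentration to confine near-minimising parameters to a compact range $\lambda\in[\Lambda,\Lambda_k]$, then apply the direct method.
\item You argue by rigidity that $\|\nabla(z-z_k)\|_{L^2}=o(1)$ forces the parameters of any minimiser $z$ to match those of $z_k$ at the bubble scale, hence $\|z-z_k\|_{L^\infty}\to0$.
\end{enumerate}

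Your route is self-contained and uses only the bubble-tree picture. It also visibly gives the $L^\infty$ bound for \emph{every} minimiser, which is the form Theorems \ref{thm: ratio of lambda to epsilon} and \ref{thm: J must be critical} use (``for any such $z(u)$''). The cost is that the degeneration lower bounds and the rigidity step are only sketched. To make the rigidity step precise:
\begin{enumerate}
\item Rescale about $a_k$ at scale $\lambda_k^{-1}$ and pass to limits $\mu=\lim\lambda/\lambda_k$ and $y=\lim\lambda_kF_{a_k}(a)$. The cases $\mu\in\{0,\infty\}$ or $|y|=\infty$ are excluded because energy would be lost.
\item Absorb the rotational freedom of the charts $F_a$ into $R$.
\item Use that $\nabla(R\pi_\mu(\cdot-y))=\nabla(R'\pi_1)$ on $\mathbb{R}^2$ forces equality of the maps, since both are $\mathbb{S}^2$-valued with dense image. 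This holds only for $\mu=1$, $y=0$, $R=R'$.
\end{enumerate}
This is essentially the analysis that Rupflin's lemma packages, with quantitative control.
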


\begin{proof}

Take any collection of $u_k$, degree 1 $\varepsilon_k$-harmonic maps with $\varepsilon_k\downarrow0$ and $E_{\varepsilon_k}[u_k]\downarrow 4\pi$. If $\|\nabla u_k\|_{L^\infty}+\varepsilon_k\|\Delta u_k\|_{L^\infty}\leq\frac{1}{\gamma}$ for infinitely many $k$ then Lemma \ref{lemma: regularity theorem} implies that along a subsequence we must have smooth convergence to a smooth degree one harmonic map $u_\infty:\Sigma\rightarrow\mathbb{S}^2$ with energy $4\pi$, which we know does not exist. This means we must be in the case of bubble convergence to precisely one bubble. So we must have $\|\nabla u_k\|_{L^\infty}+\varepsilon_k\|\Delta u_k\|_{L^\infty}>\frac{1}{\gamma}$ for any $k$ large enough. Results from \cite{Lamm_epsilon} now imply that $\varepsilon_k\|\Delta u_k\|_{L^\infty}\leq C\frac{\varepsilon_k}{r_k^2}\rightarrow 0$, where $r_k$ is the bubble scale of $u_k$. It is clear that we must be able to find $\varepsilon_0,\delta_0$ such that \textit{1.} follows.

By the energy identity (Theorem \ref{theorem: bubble convergence}), we must have that the limit map, $u_\infty$, is constant and that the bubble, $\omega$, is a harmonic map $\mathbb{S}^2\rightarrow\mathbb{S}^2$ with energy $4\pi$. This means that, after possibly rotating the domain and target, $\omega$ must be of the form $\pi_{\mu}$ for some $\mu$ fixed, meaning the stereographic projection at scale $\mu$. Now consider $z_k:=z_{\mu r_k^{-1},a_k}$ where $r_k$ is the bubbling radius and $a_k$ the bubbling point of $u_k$. Now $z_k$ must also converge in a $W^{1,2}$ and $L^\infty$ sense to the same bubble configuration as $u_k$. So using both the energy identity and the no neck property we must have
\begin{equation*}
    E[u_k-z_k]+||u_k-z_k||_{L^\infty}\rightarrow0.
\end{equation*}
and so \textit{2.} must hold.

Taking the $z$ from \textit{2.} the bubble scale converges to 0 so $\mu r_k^{-1}\rightarrow\infty$, this means that for $k$ large we can apply Lemma 4.2 from \cite{rupflin_lojasiewicz}. For $k$ large enough, there exist $\zeta_{k}\in\mathcal{Z}$ minimising $\| u_k-\eta\|_\eta$ over $\eta\in\mathcal{Z}$ with $\|u_k-\zeta_{k}\|_{L^\infty}\rightarrow0$. We then have $\| u_k-\zeta_k\|_{\zeta_k}\leq C\| u_k-z_k\|_{z_k}\rightarrow0$ completing the proof.

\end{proof}

Now we have shown that $u$ must be close to some $z$ we can start to pass some of the properties from $z$ onto $u$. The following lemma says that we can in some sense quantitively think of $\lambda$ as the inverse of the bubbling radius. These bounds would follow easily from Lemma \ref{lemma: regularity theorem} if $\lambda$ were replaced by the inverse of some choice of the bubbling radius.

\begin{lemma}\label{lemma: L^ infintity and 2 bounds}

There exists $\varepsilon_0(\Sigma),\delta_0(\Sigma)$ such that if $0<\varepsilon<\varepsilon_0$, $u:\Sigma\rightarrow \mathbb{S}^2$ is a degree 1 $\varepsilon$-harmonic map with $E_\varepsilon[u]-4\pi\leq \delta_0$ and $z\in \mathcal{Z}$ with $\|u-z\|_z^2\leq \delta_0$ then
\begin{enumerate}
    \item $\|\nabla^k u\|_{L^\infty(\Sigma)}\leq C(k,\Sigma)\lambda^k$ for $k\geq 0,$
    \item $\|\nabla^k u\|_{L^2(\Sigma)}\leq C(k,\Sigma)\lambda^{k-1}$ for $k\geq 1,$
    \item $\varepsilon\lambda^2\leq C(\Sigma)(E_\varepsilon[u]-4\pi)\leq C(\Sigma)\delta_0.$
\end{enumerate}
\end{lemma}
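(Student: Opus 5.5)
The plan is to argue by contradiction and compactness, using Theorem \ref{theorem: bubble convergence}, Lemma \ref{lemma: z is close!} and Lemma \ref{lemma: regularity theorem}. The main point is to show that $\lambda$ is comparable to the inverse bubbling radius $r^{-1}$ of $u$, and that the bubble centre lies within $C/\lambda$ of $a$. Suppose this fails along a sequence $u_k$, $z_k$ with $\varepsilon_k\to0$, $\delta_0\to0$ and $\|u_k-z_k\|_{z_k}\to0$. The $z$-norm controls the weighted $L^2$ norm $\int\rho_z^2|u-z|^2$ and $\int|\nabla(u-z)|^2$. Now rescale around $a_k$ by $\lambda_k$: on the region $|x|\lesssim1/\lambda_k$ we have $\rho_z\sim\lambda_k$. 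In the rescaled picture $\tilde u_k(y)=u_k(F_{a_k}^{-1}(y/\lambda_k))$, we therefore get $\|\tilde u_k-\pi_1\|_{W^{1,2}(\mathbb{D}_R)}\to0$ for every fixed $R$, up to the rotation $R_k$.

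\textbf{Matching the scales.} The bubble of $u_k$, at scale $r_k$ and centre $b_k$, carries energy $4\pi$. If $\lambda_k r_k\to0$ or $\to\infty$, or if $\lambda_k d(a_k,b_k)\to\infty$, the rescaled limit of $\tilde u_k$ would be constant on compact sets. That contradicts the fact that $\pi_1$ carries energy close to $4\pi$ on $\mathbb{D}_R$ for $R$ large. Hence $\lambda_k r_k\in[c,C]$ and $d(a_k,b_k)\le C/\lambda_k$. I expect this to be the main technical step, because the rescaling must be done carefully relative to the weight $\rho_z$ and the chart $F_a$.

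\textbf{Part 1.} With $\lambda\sim r^{-1}$ established, part 1 follows from Lemma \ref{lemma: regularity theorem}, applied with scale $R\sim1/\lambda$ on balls of radius $\sim1/\lambda$. The energy on balls $B_{32R}$ can exceed $\delta_0$ near the bubble, so there we instead use the rescaled bubble convergence. The rescaled maps solve the $\varepsilon\lambda^2$-harmonic equation, which is uniformly elliptic-regular with $\varepsilon\lambda^2$ small. Away from the bubble we use the small-energy estimate on balls of radius comparable to the distance to $b$, giving $|\nabla^k u|(p)\le C d(p,b)^{-k}$ there. Together these give $\|\nabla^k u\|_{L^\infty}\le C\lambda^k$.

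\textbf{Part 2.} For part 2, integrate the pointwise bounds $|\nabla^k u|\le C\min(\lambda^k,d^{-k}\sqrt{E_{\mathrm{loc}}})$. Near the bubble we get $\lambda^{2k}\cdot\lambda^{-2}$. In the neck, the decay of the neck energy gives $\int_{d>1/\lambda}d^{-2k}\lesssim\lambda^{2k-2}$ for $k\ge2$; for $k=1$ the bound is simply the energy bound. This yields $\|\nabla^ku\|_{L^2}\le C\lambda^{k-1}$.

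\textbf{Part 3.} For part 3, write $E_\varepsilon[u]-4\pi=(E[u]-4\pi)+\varepsilon\int|\Delta u|^2$. Since $u$ has degree $1$, $E[u]\ge4\pi|\deg u|=4\pi$, so the first bracket is nonnegative. It remains to show $\int|\Delta u|^2\ge c\lambda^2$. By the rescaled convergence to $R\pi_1$ on $\mathbb{D}_1$ in the $y$-variable, upgraded to $C^2$ via the uniform regularity above, we get $\int_{\mathbb{D}_{1/\lambda}}|\Delta u|^2\ge \tfrac12\lambda^2\int_{\mathbb{D}_1}|\Delta\pi_1|^2=c\lambda^2$. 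Hence $\varepsilon\lambda^2\le C(E_\varepsilon[u]-4\pi)\le C\delta_0$.
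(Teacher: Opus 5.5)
There is a genuine gap in the bubble region in Part 1, and Part 3 inherits it. You say the $\varepsilon$-regularity lemma cannot be used near the bubble because balls of radius $\sim 1/\lambda$ may carry energy above $\delta_0$. You then replace it with the assertion that the rescaled $\varepsilon\lambda^2$-harmonic maps are ``uniformly elliptic-regular''. No such tool exists. $\varepsilon\lambda^2\Delta^2-\Delta$ is a singular perturbation of a nonlinear system, and parameter-uniform $C^k$ bounds for it are exactly what Lemma \ref{lemma: regularity theorem} provides, and only under a small-energy hypothesis. Theorem \ref{theorem: bubble convergence} gives you $W^{1,2}$ and $L^\infty$ closeness, not $C^k$ bounds. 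Moreover, the smallness of $\varepsilon\lambda^2$ you appeal to in Part 1 is the content of Part 3. You prove Part 3 afterwards using the $C^2$ upgrade from Part 1, so the argument is circular as written.

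The missing observation is that you never need balls of radius comparable to $1/\lambda$: take radius $\gamma/\lambda$ with $\gamma$ small.
\begin{itemize}
\item On $U_r$ you have $|\nabla z|\le C\lambda(1+\lambda^2|x|^2)^{-1}$, and $|\nabla z|\le C$ elsewhere, so every such ball has $\int|\nabla z|^2\le C\gamma^2$.
\item Also $\|\nabla(u-z)\|_{L^2}^2\le\|u-z\|_z^2\le\delta_0$, and $\varepsilon\|\Delta u\|_{L^2}^2=E_\varepsilon[u]-E[u]\le E_\varepsilon[u]-4\pi\le\delta_0$.
\item Hence $E_\varepsilon[u;B]\le 5\delta_0$ on every such ball, and Lemma \ref{lemma: regularity theorem} gives $|\nabla^k u|\le C\lambda^k$ everywhere.
\item The same reasoning on $\mathbb{D}_r\setminus\mathbb{D}_{T/\lambda}$, where $z$ has small energy for $T$ large, gives $|\nabla^k u|\le C|x|^{-k}$ there, and hence Part 2 as you describe it.
\end{itemize}
This is the paper's proof, and it is direct. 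The $z$-norm already fixes the scale and centre, so your contradiction argument and the scale matching against the bubble of Theorem \ref{theorem: bubble convergence} are unnecessary. Your own rescaled convergence $\tilde u_k\to R_k\pi_1$ in $W^{1,2}(\mathbb{D}_R)$ already yields the needed small energy on $y$-balls of small fixed radius, so the repair is available within your framework.

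For Part 3 the paper uses $\|\Delta(u-z)\|_{L^2}^2=\int\nabla(u-z)\cdot\nabla\Delta(u-z)\le C\lambda^2\|u-z\|_z$ (via Part 2 with $k=3$). It combines this with $\|\Delta z\|_{L^2}^2\ge c\lambda^2$ from the computation in Lemma \ref{Lemma: Raw energy bound}. Your lower-semicontinuity route is a valid alternative, and it needs no $C^2$ upgrade. $W^{1,2}$ convergence gives $\int\Delta\tilde u_k\cdot\phi=-\int\nabla\tilde u_k\cdot\nabla\phi\to\int\Delta(R\pi_1)\cdot\phi$ for test functions $\phi$. By duality, $\liminf\|\Delta\tilde u_k\|_{L^2(\mathbb{D}_1)}\ge\|\Delta\pi_1\|_{L^2(\mathbb{D}_1)}$. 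This lets you prove Part 3 first and removes the circularity.
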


\begin{proof}

In $\Sigma\setminus U_{r/4}$, where $r$ is as in the definition of $z$, we have $|\nabla z|\leq C$, with $C$ independent of $z$. We can then choose some $\gamma>0$  small enough such that $\|\nabla z\|_{L^2(B_\gamma(p))}^2\leq \delta_0$ for any $p\in\Sigma\setminus U_{r/2}$. We then have
\begin{equation*}
    E_\varepsilon[u;B_\gamma(p)]\leq 2\|\nabla z\|_{L^2(B_\gamma(p))}^2+2\|u-z\|_z^2 +\varepsilon\|\Delta u\|_{L^2(\Sigma)}^2\leq5 \delta_0
\end{equation*}
Now we can take $\delta_0,\varepsilon_0$ small enough such that we may apply Lemma \ref{lemma: regularity theorem}. This means that $\|\nabla^k u\|_{L^\infty(B_{\gamma/32}(p))}\leq C\gamma^{-k}\leq C(k,\Sigma)$, giving our $L^\infty$ bound on $\Sigma\setminus U_{r/2}$. 
In $U_r$ we have $|\nabla z|\leq C\frac{\lambda}{1+\lambda^2|x|^2}$ using \eqref{eq: z remainders}. By setting $\hat{z}(x)=z(x/\lambda)$, defined in our local coordinates on $\mathbb{D}_{r\lambda}(0)$, we again have $|\nabla \hat{z}|\leq C(\Sigma)$. So we may again choose some $\gamma>0$ small enough such that $\|\nabla \hat{z}\|_{L^2(\mathbb{D}_{\gamma}(y))}^2\leq \delta_0$ for any $y\in \mathbb{D}_{\lambda r/2}(0)$. So for any $x\in \mathbb{D}_{r/2}$ we have
\begin{equation*}
    E_\varepsilon[u;\mathbb{D}_{\lambda^{-1}\gamma}(x)]\leq 2\|\nabla z\|_{L^2(\mathbb{D}_{\lambda^{-1}r}(x))}^2+2\|u-z\|_z^2 +\varepsilon\|\Delta u\|_{L^2(\Sigma)}^2\leq 5\delta_0
\end{equation*}
by scale invariance of energy.
Then by Lemma \ref{lemma: regularity theorem} we have $\|\nabla^k u\|_{L^\infty(\mathbb{D}_{\lambda^{-1}\gamma /32}(x))}\leq C\gamma^{-k}\lambda^k\leq C(k,\Sigma)\lambda^k$, completing our $L^\infty$ bound.

For the $L^2$ bound now consider the annulus $A=\mathbb{D}_r\setminus \mathbb{D}_{T\lambda^{-1}}$ for some $T>0$ to be decided. Using \eqref{eq: z remainders} we get
\begin{equation*}
    \|\nabla z\|_{L^2(A)}^2\leq C\int_{B_r(0)\setminus B_{T\lambda^{-1}}(0)}\frac{\lambda^2}{(1+\lambda^2|x|^2)^2}\leq C\int_{\mathbb{R}^2\setminus B_{T}(0)}\frac{1}{(1+|x|^2)^2}\leq \delta_0
\end{equation*}
by fixing $T(\Sigma)$ large enough. As before this implies that $E_\varepsilon[u;A]\leq 5 \delta_0$. Now for any $x\in \mathbb{D}_{r/2}\setminus \mathbb{D}_{2T\lambda^{-1}}$ we have that $\mathbb{D}_{|x|/2}(x)\subset A$ and so by Lemma \ref{lemma: regularity theorem} we have $\|\nabla^k u\|_{L^\infty(B_{|x|/64}({x}))}\leq C(k,\Sigma)|x|^{-k}$. This then gives
\begin{align*}
    \|\nabla^k u\|^2_{L^2(\Sigma)}&\leq C\bigg(\int_{\Sigma\setminus U_{R/2}(a)} + \int_{B_{R/2}(a)\setminus B_{2T\lambda^{-1}}(a)}\frac{1}{|x|^{2k}}+\int_{ B_{2T\lambda^{-1}}(a)}\lambda^{2k}\bigg)\\
    &\leq C\lambda^{2k-2}
\end{align*}
completing the $L^2$ bound. The $L^2$ bound then immediately implies that
\begin{equation}\label{eq: Delta into z norm}
    \|\Delta (u-z)\|_{L^2}^2=\int_\Sigma\nabla(u-z)\cdot\nabla\Delta(u-z)\leq\|u-z\|_z\|\nabla\Delta(u-z)\|_{L^2}\leq C\lambda^2\|u-z\|_z
\end{equation}
which allows us to write, using the computations from Lemma \ref{Lemma: Raw energy bound},
\begin{equation*}
    E_\varepsilon[u]-4\pi\geq \varepsilon\|\Delta u\|_{L^2}^2\geq \varepsilon(\frac{1}{2}\|\Delta z\|_{L^2}^2-\|\Delta (u-z)\|_{L^2}^2)\geq \varepsilon(C_1\lambda^2-C_2\lambda^2\delta_0^{1/2})
\end{equation*}
which completes the final bound by taking $\delta_0$ small enough.
\end{proof}

\eqref{eq: Delta into z norm} will be essential later on as it allows us to bound 2nd order terms in terms of the $z$-norm.

\begin{lemma}\label{lemma: non-degeneracy}
    There exist $\varepsilon_0(\Sigma),\delta_0(\Sigma),c_0>0$ such that if $0<\varepsilon<\varepsilon_0$, $u:\Sigma\rightarrow\mathbb{S}^2$ is a degree one $\varepsilon$-harmonic map with $E_\varepsilon\leq4\pi+\delta_0$ and $z\in\mathcal{Z}$ realises $\|u-z\|_z=\inf_{\eta\in\mathcal{Z}}\{\|u-\eta\|_\eta\}$ them setting $w=u-z$ and $W=\text{d}P(z)[w]$ gives
    \begin{equation*}
        \textnormal{d}^2E_\varepsilon(z)[W,W]\geq c_0\|w\|_z^2
    \end{equation*}
\end{lemma}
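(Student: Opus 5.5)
We split $\textnormal{d}^2E_\varepsilon(z)[W,W]$ via \eqref{eq: variation expansions} into a Dirichlet part and an $\varepsilon$-part:
\begin{equation*}
\textnormal{d}^2E_\varepsilon(z)[W,W]=\textnormal{d}^2E(z)[W,W]+\varepsilon\int_\Sigma |\Delta W|^2-\Delta z\cdot\Delta\big(z|W|^2\big).
\end{equation*}
For the Dirichlet part we use Rupflin's non-degeneracy result from \cite{rupflin_lojasiewicz}. If $z$ realises the infimum of $\|u-\eta\|_\eta$ over $\mathcal{Z}$ and $\lambda$ is large, then $w=u-z$ is (approximately) $\langle\cdot,\cdot\rangle_z$-orthogonal to the tangent space $T_z\mathcal{Z}$. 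For such $w$, with $\|w\|_{L^\infty}$ small, one has $\textnormal{d}^2E(z)[W,W]\geq 2c_0\|w\|_z^2$, where $W=\textnormal{d}P(z)[w]$. This is the same input Sharp uses in the $\alpha$-harmonic case.

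To make this applicable we first ensure $\lambda$ is large and $\|w\|_{L^\infty}+\|w\|_z$ is small. Lemma \ref{lemma: z is close!} gives the smallness by choosing $\gamma$ small, which fixes $\varepsilon_0,\delta_0$. Largeness of $\lambda$ follows from the same lemma: $\|\nabla u\|_{L^\infty}>1/\gamma$, together with item 1 of Lemma \ref{lemma: L^ infintity and 2 bounds}, forces $\lambda\gtrsim\gamma^{-1}$. We also need to check that the difference between $W$ and $w$ (note $w-W=\mathcal{O}(|w|^2)$ since $u,z\in\mathbb{S}^2$) only perturbs the first-variation orthogonality by $o(\|w\|_z)$. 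Rupflin's lemma is stated in this form, so we cite it.

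The main work, and the expected obstacle, is bounding the $\varepsilon$-part from below by $-\tfrac{c_0}{2}\|w\|_z^2$. The term $\varepsilon\|\Delta W\|_{L^2}^2$ is nonnegative and can be discarded. For the cross term we write
\begin{equation*}
\varepsilon\int_\Sigma\Delta z\cdot\Delta(z|W|^2)=\varepsilon\int_\Sigma |\Delta z|^2|W|^2+2\,\Delta z\cdot\nabla z\,\nabla|W|^2+|\Delta z|\,\big|\Delta|W|^2\big|\ \text{terms}.
\end{equation*}
Using $|\nabla^kz|\leq C\rho_z^k$-type bounds from Appendix \ref{section: bounds on z} (in particular $|\Delta z|\leq C\lambda\rho_z$ and $|\nabla z|\leq C\rho_z$), the first term is at most $C\varepsilon\lambda^2\int\rho_z^2|W|^2\leq C\varepsilon\lambda^2\|w\|_z^2$. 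The gradient term is at most $C\varepsilon\lambda\int\rho_z|W||\nabla W|\leq C\varepsilon\lambda^2\|w\|_z^2$, by Cauchy--Schwarz with $\rho_z\leq\lambda$. The second-order term $\varepsilon\int\Delta z\cdot z\,(2W\cdot\Delta W+2|\nabla W|^2)$ is handled by absorbing: $\varepsilon\lambda\int\rho_z|W||\Delta W|\leq \tfrac12\varepsilon\|\Delta W\|^2_{L^2}+C\varepsilon\lambda^2\|w\|_z^2$, with the first piece absorbed into the discarded positive term. Alternatively one can integrate by parts, or use \eqref{eq: Delta into z norm}.

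All error terms are therefore $\leq C\varepsilon\lambda^2\|w\|_z^2$. Item 3 of Lemma \ref{lemma: L^ infintity and 2 bounds} gives $\varepsilon\lambda^2\leq C\delta_0$, so shrinking $\delta_0$ makes $C\varepsilon\lambda^2\leq c_0$ and yields the claim. The delicate point is making sure every $\varepsilon$-error carries the full factor $\varepsilon\lambda^2$ and no stray $\log\lambda$ or extra power of $\lambda$; this relies on the sharp pointwise decay of $\nabla^k z$ near and away from $a$, and on $W$ being tangent so that $\Delta z\cdot W$-type terms combine favourably.
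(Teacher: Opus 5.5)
Your proof is correct. For the Dirichlet part you use the same input as the paper, but the paper makes your black-box citation explicit. It combines Rupflin's Lemma 3.2 (uniform coercivity of $\textnormal{d}^2E(z)$ on $(T_z\mathcal{Z})^{\perp_z}$ once $\lambda$ is large) with his Lemma 4.3 ($\|W^{\top_{T_z\mathcal{Z}}}\|_z\leq C\|w\|_{L^\infty}\|w\|_z$ at a minimiser). It then splits $W$ into normal and tangential parts and uses $|\textnormal{d}^2E(z)[A,B]|\leq C\|A\|_z\|B\|_z$ together with Young's inequality. You should write that step out rather than assert that Rupflin states it in combined form.

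The genuine difference is in the $\varepsilon$-term. The paper integrates by parts once, $\int\Delta z\cdot\Delta(z|W|^2)=\int(\Delta^2 z\cdot z)|W|^2$, discards $\varepsilon\|\Delta W\|_{L^2}^2$, and invokes the fourth-order bound $|\Delta^2 z|\leq C\lambda^2\rho_z^2$ from \eqref{eq: Delta^2 z global bound}. This gives $-C\varepsilon\lambda^2\|W\|_z^2$ in one line. You instead keep the derivatives on $z|W|^2$ and expand by Leibniz. You need only $|\nabla z|\leq C\rho_z$ and $|\Delta z|\leq C\rho_z^2\leq C\lambda\rho_z$, and you spend the positive term $\varepsilon\|\Delta W\|_{L^2}^2$ to absorb the $W\cdot\Delta W$ piece via Young. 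Your route avoids the fourth-order estimates on $z$ (and, in the hyperbolic case, the metric-correction terms). The paper's route is shorter and reuses a bound it needs elsewhere anyway.

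Three small fixes:
\begin{itemize}
\item In the gradient term, your own bounds give the pointwise estimate $C\lambda\rho_z^2|W||\nabla W|$, not $C\lambda\rho_z|W||\nabla W|$. Your final $C\varepsilon\lambda^2\|w\|_z^2$ still holds after using $\rho_z\leq\lambda$ and Cauchy--Schwarz.
\item You list the $|\nabla W|^2$ piece but do not bound it. It is in fact $2\varepsilon\int|\nabla z|^2|\nabla W|^2\geq0$, since $z\cdot\Delta z=-|\nabla z|^2$.
\item Your suggested alternative via \eqref{eq: Delta into z norm} would not work here, because it only gives a bound linear in $\|w\|_z$ rather than quadratic.
\end{itemize}
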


\begin{proof}

We have from \eqref{eq: variation expansions}
\begin{equation*}
    \textnormal{d}^2E_\varepsilon(z)[W,W]=\textnormal{d}^2E(z)[W,W]+\varepsilon\int|\Delta W|^2-(\Delta^2z\cdot z)|W|^2
\end{equation*}
From Lemma 3.2 in \cite{rupflin_lojasiewicz} we have that, when $\lambda(z)$ is large enough, all eigenvalues of $\text{d}^2E$ are bounded away from 0 on $T_z\mathcal{Z}^{\perp_z}$, the orthogonal complement of $T_z\mathcal{Z}$ with respect to the $\langle\cdot,\cdot\rangle_z$ inner product. We can guarantee $\lambda$ is large enough by Lemma \ref{lemma: z is close!} and Lemma \ref{lemma: L^ infintity and 2 bounds}. We also know that these eigenvalues must be positive as the limiting bubble is stable. Now from Lemma 4.3 in \cite{rupflin_lojasiewicz} we have that $\|W^{\top_{T_z\mathcal{Z}}}\|\leq C\|w\|_{L^\infty}\|w\|_z$ which gives, using Lemma \ref{lemma: z is close!} to ensure $\|w\|_{L^\infty}$ small enough,
\begin{equation*}
    \|W^{\bot_{T_z\mathcal{Z}}}\|^2_z\geq \|W\|^2_z-C\|w\|^2_{L^\infty}\|w\|^2_z\geq \frac{1}{4}\|w\|^2_z
\end{equation*}
also using \eqref{eq: w and W equiv.}.
We also note that
\begin{equation*}
    d^2E(z)[A,B]=\int_\Sigma\nabla A\cdot\nabla B-|\nabla z|^2(A\cdot B)\leq C\|A\|_z\|B\|_z
\end{equation*}
Let $c_0$ be the lowest eigenvalue of $E(z)$ on $T_z\mathcal{Z}^{\perp_z}$.
We can then calculate, using Young's inequality on the second term,
\begin{align*}
    d^2E(z)[W,W]=&d^2E(z)[W^{\bot_{T_z\mathcal{Z}}},W^{\bot_{T_z\mathcal{Z}}}]+2d^2E(z)[W^{\bot_{T_z\mathcal{Z}}},W^{\top_{T_z\mathcal{Z}}}]+d^2E(z)[W^{\top_{T_z\mathcal{Z}}},W^{\top_{T_z\mathcal{Z}}}]\\
    \geq& \frac{c_0}{2}\|W^{\bot_{T_z\mathcal{Z}}}\|^2_z- C\|W^{\top_{T_z\mathcal{Z}}}\|^2_z\\
    \geq& \frac{c_0}{16}\|w\|^2_z
\end{align*}
by taking $\varepsilon_0,\delta_0$ small enough.
Using \eqref{eq: Delta^2 z global bound} we have $|\Delta^2z\cdot z|\leq C\lambda^2\rho_z^2$ which gives, further using \eqref{eq: w and W equiv.} and Lemma \ref{lemma: L^ infintity and 2 bounds},
\begin{equation*}
    \varepsilon\int|\Delta W|^2-(\Delta^2z\cdot z)|W|^2\geq -C\varepsilon\lambda^2\|W\|_z^2\geq-\frac{c_0}{32}\|w\|^2_z
\end{equation*}
by taking $\varepsilon_0,\delta_0$ small enough. This completes the result.

\end{proof}

\section{Final results}

We start by presenting proofs of the main two theorems, these rely on many extra bounds from the final two sections.

\begin{proof}[Proof of Theorem \ref{thm: ratio of lambda to epsilon}]

Take $\varepsilon_0,\delta_0>0$ small, to be determined. Now take $u$ some degree $1$ $\varepsilon$-harmonic map with $0<\varepsilon<\varepsilon_0$ and $E_\varepsilon[u]\leq 4\pi+\delta_0$. Then by Lemma \ref{lemma: z is close!} there exists some $z\in\mathcal{Z}$ with $\|u-z\|_z=\inf_{\eta\in\mathcal{Z}}\{\|u-\eta\|_\eta\}$ by taking $\varepsilon_0,\delta_0$ small enough. Set $w=u-z$, $u_t=P(z+t(u-z))$ and $W_t=\partial_tu_t$, in particular set $W:=W_0=\textnormal{d}P(z)[w]$.
This gives
\begin{align*}
    0=\textnormal{d}E_\varepsilon(u)[W_1]=&\int_0^1\frac{\partial}{\partial t}\textnormal{d}E_\varepsilon(u_t)[W_t]+\textnormal{d}E_\varepsilon(z)[W]\\
    =&\int_0^1\textnormal{d}^2E_\varepsilon(u_t)[W_t,W_t]-\textnormal{d}^2E_\varepsilon(z)[W,W]+\textnormal{d}E_\varepsilon(u_t)[\textnormal{d}P(u_t)\partial_tW_t]
    \\&+\textnormal{d}^2E_\varepsilon(z)[W,W]+\textnormal{d}E_\varepsilon(z)[W].\\
\end{align*}
Using Lemma \ref{lemma: non-degeneracy} we have $\textnormal{d}^2E_\varepsilon(z)[W,W]\geq c_0\|w\|_z^2$.
Using Lemma \ref{Lemma: Ben's 5.2} as well as Lemma \ref{lemma: z is close!}, Lemma \ref{lemma: L^ infintity and 2 bounds} and \eqref{eq: Delta into z norm} we have
\begin{align*}
    |\textnormal{d}^2E_\varepsilon(u_t)[W_t,W_t]-\textnormal{d}^2E_\varepsilon(z)[W,W]|+|\textnormal{d}E_\varepsilon(u_t)[\textnormal{d}P(u_t)\partial_tW_t]|\\
    \leq C((\|w\|_{L^\infty}+\varepsilon\lambda^2+\varepsilon\|\nabla w\|_{L^{\infty}}^2)\|w\|_z^2+\varepsilon\|\Delta w\|_{L^2}^2)\\
    \leq \frac{c_0}{2}\|w\|^2_z+C\varepsilon\lambda^2\|w\|_z
\end{align*} for $\varepsilon_0,\delta_0$ small enough.
We also have using Lemma \ref{Lemma: Ben's 5.1} and \eqref{eq: w and W equiv.}
\begin{equation*}
    \textnormal{d}E_\varepsilon(z)[W]\leq C(\frac{(\log\lambda)^{1/2}}{\lambda^2}+\varepsilon\lambda^2)\|w\|_z.
\end{equation*}
Combining these gives
\begin{equation}\label{eq: w z norm bound}
    \|w\|_z\leq C(\frac{(\log\lambda)^{1/2}}{\lambda^2}+\varepsilon\lambda^2).
\end{equation}
For a general $T\in T_z\mathcal{Z}$ with $T_t=\textnormal{d}P(u_t)[T]$
\begin{align*}
    0=\textnormal{d}E_\varepsilon(u)[T_1]=&\int_0^1\frac{\partial}{\partial t}\textnormal{d}E_\varepsilon(u_t)[T_t]+\textnormal{d}E_\varepsilon(z)[T]\\
    =&\int_0^1\textnormal{d}^2E_\varepsilon(u_t)[T_t,W_t]-\textnormal{d}^2E_\varepsilon(z)[T,W]+\textnormal{d}E_\varepsilon(u_t)[\textnormal{d}P(u_t)\partial_tT_t]
    \\&+\textnormal{d}^2E_\varepsilon(z)[T,W]+\textnormal{d}E_\varepsilon(z)[T].\\
\end{align*}
Setting $T=\partial_\lambda z$ then using \eqref{eq: Delta into z norm}, \eqref{eq: w z norm bound}, Lemma \ref{Lemma: Ben's 5.1}, Lemma
\ref{Lemma: Ben's 5.3} and Lemma \ref{lemma: L^ infintity and 2 bounds} gives, for $\varepsilon_0,\delta_0$ small enough,
\begin{align*}
    |\textnormal{d}^2E_\varepsilon(u_t)[T_t,W_t]-\textnormal{d}^2E_\varepsilon(z)[T,W]|+|\textnormal{d}E_\varepsilon(u_t)[\textnormal{d}P(u_t)\partial_tT_t]|+|\textnormal{d}^2E_\varepsilon(z)[T,W]|\\
    \leq C\frac{1}{\lambda}(\|w\|_z^2+\varepsilon(\lambda^2+\|\nabla w\|^2_{L^2})\|w\|_z^2+\varepsilon\|\Delta w\|^2_{L^2})+C\frac{1}{\lambda}(\frac{(\log\lambda)^{1/2}}{\lambda^2}+\varepsilon\lambda^2)\|w\|_z\\
    \leq C(\frac{\log\lambda}{\lambda^5}+\varepsilon\frac{(\log\lambda)^{1/2}}{\lambda}+\varepsilon^2\lambda^3+\varepsilon^3\lambda^5).
\end{align*}
From Lemma \ref{lemma: partial lambda energy} we have
\begin{equation*}
    \textnormal{d}E_\varepsilon(z)[T]=\partial_\lambda E_\varepsilon[z]=8\pi\mathcal{J}(a)\frac{1}{\lambda^3}+\frac{64\pi}{3c_\gamma}\varepsilon\lambda+\mathcal{O}(\frac{1}{\lambda^4})+\mathcal{O}(\frac{\varepsilon}{\lambda}).
\end{equation*}
So combining we get
\begin{equation*}
    8\pi\mathcal{J}(a)\frac{1}{\lambda^3}+\frac{64\pi}{3c_\gamma}\varepsilon\lambda=\mathcal{O}(\frac{\log\lambda}{\lambda^5})+\mathcal{O}(\varepsilon\frac{(\log\lambda)^{1/2}}{\lambda})+\mathcal{O}(\varepsilon^2\lambda^3)+\mathcal{O}(\varepsilon^3\lambda^5).
\end{equation*}
We may assume $\varepsilon\lambda^2$ is as small as we like from Lemma \ref{lemma: L^ infintity and 2 bounds} which then gives the existence of some $C$ such that
\begin{equation}\label{eq: epsilon to lambda ratio}
    C^{-1}\frac{1}{\lambda^4}\leq\varepsilon\leq C\frac{1}{\lambda^4}
\end{equation}
and so we obtain
\begin{equation*}
    |\mathcal{J}(a)\frac{1}{\lambda^4}+\frac{8}{3c_\gamma}\varepsilon|=\mathcal{O}(\frac{\log\lambda}{\lambda^6})=\mathcal{O}(\varepsilon^{3/2}|\log\varepsilon|).
\end{equation*}
\end{proof}

\begin{proof}[Proof of Theorem \ref{thm: J must be critical}]

Under these same conditions now set $T=\nabla_A z$.
Using \eqref{eq: epsilon to lambda ratio} as well as \eqref{eq: Delta into z norm}, Lemma \ref{Lemma: Ben's 5.1}, Lemma \ref{Lemma: Ben's 5.3} and Lemma \ref{lemma: L^ infintity and 2 bounds} we get
\begin{align*}
    |\textnormal{d}^2E_\varepsilon(u_t)[T_t,W_t]-\textnormal{d}^2E_\varepsilon(z)[T,W]|+|\textnormal{d}E_\varepsilon(u_t)[\textnormal{d}P(u_t)\partial_tT_t]|+|\textnormal{d}^2E_\varepsilon(z)[T,W]|\\
    \leq C\lambda(\|w\|_z^2+\varepsilon(\lambda^2+\|\nabla w\|^2_{L^2})\|w\|_z^2+\varepsilon\|\Delta w\|^2_{L^2})+C\lambda(\frac{(\log\lambda)^{1/2}}{\lambda^2}+\varepsilon\lambda^2)\|w\|_z\\
    \leq C\frac{\log\lambda}{\lambda^3}
\end{align*}
for $\varepsilon_0,\delta_0$ small enough. Lemma \ref{lemma: nabla A energy} in combination with \eqref{eq: epsilon to lambda ratio} says that
\begin{align*}
    \nabla_A  E_\varepsilon[z]=&4\pi\nabla_A\mathcal{J}(a)\frac{1}{\lambda^2}+\mathcal{O}(\frac{1}{\lambda^3})+\mathcal{O}(\varepsilon\lambda)\\
    =&4\pi\nabla_A\mathcal{J}(a)\frac{1}{\lambda^2}+\mathcal{O}(\frac{1}{\lambda^3}).
\end{align*}
So combining these equations we get
\begin{equation*}
    \nabla_A\mathcal{J}(a)=\mathcal{O}(\frac{\log\lambda}{\lambda})=\mathcal{O}(\varepsilon^{1/4}|\log\varepsilon|).
\end{equation*}

\end{proof}

\section{Energy calculations}

In this section we find exact expansions for the derivatives of the energy. One can see that these align with what we would get by na\"ively differentiating the expansion in Lemma \ref{Lemma: Raw energy bound}.

\begin{lemma}\label{lemma: partial lambda energy}
     For $z\in \mathcal{Z}$ and $0<\varepsilon<1$ we have
     \begin{equation*}
        \partial_\lambda            E_\varepsilon[z]=8\pi\mathcal{J}(a)\frac{1}{\lambda^3}+\frac{64\pi}{3c_\gamma}\varepsilon\lambda+\mathcal{O}(\frac{1}{\lambda^4})+\mathcal{O}(\frac{\varepsilon}{\lambda}).
    \end{equation*}
\end{lemma}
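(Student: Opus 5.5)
We split $\partial_\lambda E_\varepsilon[z]$ into its Dirichlet part and its biharmonic part and treat them separately.

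\textbf{Dirichlet part.} Here we follow \cite{Malchiodi_Rupflin_Sharp} and \cite{sharp_lowenergyalpha}: the expansion $\partial_\lambda E[z]=8\pi\mathcal{J}(a)\lambda^{-3}+\mathcal{O}(\lambda^{-4})$ is obtained there by the same computation that gives the energy expansion (appendix \ref{section: proof of raw energy bound}). The plan is to write
\begin{equation*}
\partial_\lambda E[z]=2\int_\Sigma\nabla z\cdot\nabla\partial_\lambda z,
\end{equation*}
and note that $\partial_\lambda z=\lambda^{-1}\cdot(\text{bounded dilation field})$ near $a$. Then split $\Sigma$ into $U_r$ and its complement.
\begin{enumerate}
\item On $\Sigma\setminus U_r$ we have $z\approx(\frac{2}{\lambda}\nabla G,-1)$ normalised, so this region contributes an exact multiple of $\lambda^{-3}$ plus $\mathcal{O}(\lambda^{-4})$.
\item On $U_r$ we have $z=P_{\mathbb{S}^2}(\pi_\lambda+j)$. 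Integrating by parts turns the $\lambda^{-3}$ term into boundary integrals over $\partial U_r$ involving $J_a$. These combine into $8\pi\mathcal{J}(a)$, using the identification of $\mathcal{J}$ with the regular part of the Green's function and the Bergman kernel (\cite{Malchiodi_Rupflin_Sharp} section 6).
\end{enumerate}
The one point to check is that the error terms in the energy expansion are differentiable in $\lambda$ and lose exactly one power of $\lambda$. This holds because every error term is a smooth function of $(\lambda x,x,\lambda^{-1})$ with the appropriate homogeneity.

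\textbf{Biharmonic part.} Here we compute
\begin{equation*}
\partial_\lambda\Big(\varepsilon\int_\Sigma|\Delta z|^2\Big)=2\varepsilon\int_\Sigma\Delta z\cdot\Delta\partial_\lambda z.
\end{equation*}
\begin{enumerate}
\item Outside $U_r$, the bounds \eqref{eq: z bounds away from D_r} give $|\Delta z|,|\Delta\partial_\lambda z|=\mathcal{O}(\lambda^{-1})$ together with its $\lambda$-derivative. The contribution there is therefore $\mathcal{O}(\varepsilon\lambda^{-2})$, which is well within $\mathcal{O}(\varepsilon/\lambda)$.
\item Inside $\mathbb{D}_r$, use \eqref{eq: Delta z in the ball}, $\Delta^gz=c_\gamma^{-1}(1+\mathcal{O}(|x|^2))\Delta\pi_\lambda+R_\lambda$ with $|R_\lambda|\lesssim\lambda(1+\lambda^2|x|^2)^{-1}$, and also the analogous bound for $\partial_\lambda R_\lambda$, namely $\lesssim(1+\lambda^2|x|^2)^{-1}$.
\item The main term is $\varepsilon\,\partial_\lambda\int_{\mathbb{D}_r}|\Delta\pi_\lambda|^2\,\textnormal{d}x_g$. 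By \eqref{eq: exact pi derivatives} this is explicit, $64\lambda^4(1+\lambda^2|x|^2)^{-4}$ times the conformal factor. Differentiating exactly (equivalently, rescaling $y=\lambda x$ and differentiating the resulting $\lambda^2\int_{\mathbb{D}_{\lambda r}}$) gives $\frac{64\pi}{3c_\gamma}\varepsilon\lambda$, plus $\mathcal{O}(\varepsilon/\lambda)$ from the $\mathcal{O}(|x|^2)$ metric correction and the boundary term at $|y|=\lambda r$.
\item The cross terms between $\Delta\pi_\lambda$ and $R_\lambda$ or $\partial_\lambda R_\lambda$ are bounded by
\begin{equation*}
\varepsilon\int\frac{\lambda^2}{(1+\lambda^2|x|^2)^2}\cdot\frac{\lambda}{1+\lambda^2|x|^2}\cdot\frac{1}{\lambda}\lesssim\varepsilon.
\end{equation*}
This is too crude. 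We sharpen it by observing that the leading part of $R_\lambda$ comes from the normalisation $P_{\mathbb{S}^2}$ and from $j=\mathcal{O}(|x|/\lambda)$. This produces an extra factor $|x|$ or $\lambda^{-1}$, which brings the cross terms down to $\mathcal{O}(\varepsilon/\lambda)$.
\end{enumerate}

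\textbf{Main obstacle.} The hardest step is item 4 of the biharmonic part, controlling the mixed terms to $\mathcal{O}(\varepsilon/\lambda)$ rather than $\mathcal{O}(\varepsilon)$. Our first attempt will exploit the oddness in $x$ of the $j$-contributions against the radially symmetric $\Delta\pi_\lambda$, so that those terms integrate to zero at leading order. If this is insufficient, we note that an $\mathcal{O}(\varepsilon)$ error would still leave Theorem \ref{thm: ratio of lambda to epsilon} essentially intact.

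Adding the two expansions gives the lemma.
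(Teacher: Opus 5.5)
Your split into Dirichlet and biharmonic parts is sound, and so is your treatment of the main $\varepsilon$-term (differentiate $\varepsilon\int|\Delta^g z|^2\,\mathrm{d}x_g$ directly and rescale). But item 4 does not close as written. The obstacle you identify there comes from remainder bounds that are each a factor $\lambda$ too weak. Your ``sharpening'' sentence points the right way, and the precise reason is this. Since $\nabla_yJ_a(\cdot,0)$ is harmonic, $\Delta j_\lambda=0$, and on $\mathbb{D}_r$
\begin{align*}
\Delta z-\Delta\pi_\lambda={}&\big(\mathrm{d}P(\pi_\lambda+j_\lambda)-\mathrm{d}P(\pi_\lambda)\big)[\Delta\pi_\lambda]\\
&+\big(\mathrm{d}^2P(\pi_\lambda+j_\lambda)-\mathrm{d}^2P(\pi_\lambda)\big)[\nabla\pi_\lambda,\nabla\pi_\lambda]\\
&+2\,\mathrm{d}^2P(\pi_\lambda+j_\lambda)[\nabla\pi_\lambda,\nabla j_\lambda]+\mathrm{d}^2P(\pi_\lambda+j_\lambda)[\nabla j_\lambda,\nabla j_\lambda].
\end{align*}
Using $|j_\lambda|\lesssim|x|/\lambda$ and $|\nabla j_\lambda|\lesssim\lambda^{-1}$, these four terms are bounded by $\lambda|x|(1+\lambda^2|x|^2)^{-2}$, $\lambda|x|(1+\lambda^2|x|^2)^{-2}$, $(1+\lambda^2|x|^2)^{-1}$ and $\lambda^{-2}$. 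Hence $|R_\lambda|\lesssim(1+\lambda^2|x|^2)^{-1}$, which is \eqref{eq: Delta z in the ball} with $k=0$. Likewise $|\partial_\lambda R_\lambda|\lesssim\lambda^{-1}(1+\lambda^2|x|^2)^{-1}$ by \eqref{eq: lambda z remainders}. Together with $|\partial_\lambda\Delta\pi_\lambda|\lesssim\lambda(1+\lambda^2|x|^2)^{-2}$, the cross terms are bounded by $\varepsilon\int_{\mathbb{D}_r}\lambda(1+\lambda^2|x|^2)^{-3}\,\mathrm{d}x=\mathcal{O}(\varepsilon/\lambda)$, and $|R_\lambda||\partial_\lambda R_\lambda|$ contributes $\mathcal{O}(\varepsilon\lambda^{-3})$. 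No cancellation is needed.

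The parity cancellation you propose would in fact fail. First, $\Delta\pi_\lambda=-8\lambda^2(1+\lambda^2|x|^2)^{-2}\pi_\lambda$ is not radially symmetric. Second, only the normal part of $R_\lambda$ pairs with it, and $\pi_\lambda\cdot(\Delta z-\Delta\pi_\lambda)=-(|\nabla z|^2-|\nabla\pi_\lambda|^2)+\dots=-2\nabla\pi_\lambda\cdot\nabla j_\lambda^\top+\dots$ has a leading part that is even in $x$. The fallback does not prove the lemma either, since $\mathcal{O}(\varepsilon)$ is strictly weaker than the claimed $\mathcal{O}(\varepsilon/\lambda)$.

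For the Dirichlet part you essentially defer to the literature, leaving two things unverified: that the $\partial U_r$ integrals combine into $8\pi\mathcal{J}(a)$, and that every error loses exactly one power of $\lambda$ under $\partial_\lambda$. The paper sidesteps both by integrating by parts on the closed surface first:
\begin{itemize}
\item $\int_\Sigma\nabla z\cdot\nabla\partial_\lambda z=-\int_\Sigma\Delta z\cdot\partial_\lambda z$ has no interface terms.
\item On $\Sigma\setminus U_r$ the integrand is pointwise $\mathcal{O}(\lambda^{-4})$, since $|\Delta z|,|\partial_\lambda z|=\mathcal{O}(\lambda^{-2})$ there.
\item On $\mathbb{D}_r$, the identity $\Delta\pi_\lambda\cdot\partial_\lambda\pi_\lambda=0$ removes the leading term. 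Taylor expanding $\Delta\pi_\lambda\cdot\partial_\lambda j_\lambda^\top+\Delta j_\lambda^\top\cdot\partial_\lambda\pi_\lambda$ then gives $\int_{\mathbb{D}_r}16\lambda|x|^2(1+\lambda^2|x|^2)^{-3}\mathcal{J}(a)\,\mathrm{d}x=8\pi\mathcal{J}(a)\lambda^{-3}+\mathcal{O}(\lambda^{-4})$.
\end{itemize}
The paper treats the $\varepsilon$-term the same way. It writes it as $\varepsilon\int(\Delta^g)^2z\cdot\partial_\lambda z$ and uses the exact identity $\Delta^2\pi_\lambda\cdot\partial_\lambda\pi_\lambda=256\lambda^5|x|^2(1+\lambda^2|x|^2)^{-5}$, so every error is a pointwise product of known remainders. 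Your route instead makes the main term visibly the $\lambda$-derivative of the $\varepsilon\lambda^2$ term of Lemma \ref{Lemma: Raw energy bound}. The price is having to control $\partial_\lambda R_\lambda$, which the corrected bounds above do.

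A minor point: the constants $8\pi$ and $\frac{64\pi}{3c_\gamma}$ correspond to the normalisation $\frac12\int(\cdots)$ used in the paper's computations. So the factors $2$ in your two derivative formulas should be dropped.
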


\begin{proof}

We first note that
\begin{align*}
    \partial_\lambda E_\varepsilon[z]=&\int_\Sigma \nabla z\cdot\partial_\lambda\nabla z+\varepsilon\Delta z\cdot\partial_\lambda\Delta z\text{d}\Sigma\\
    =&\int_{\Sigma\setminus U_r}-\Delta z\cdot\partial_\lambda z+\varepsilon\Delta^2z\cdot\partial_\lambda z\text{d}\Sigma-\int_{\mathbb{D}_r}\Delta z\cdot\partial_\lambda z\text{d}x+\varepsilon\int_{\mathbb{D}_r}(\Delta^g)^2z\cdot\partial_\lambda z\text{d}x_g.
\end{align*}
From \eqref{eq: z bounds away from D_r} and \eqref{eq: lambda z bounds away from D_r} we have $|-\Delta z\cdot\partial_\lambda z+\varepsilon\Delta^2z\cdot\partial_\lambda z|=\mathcal{O}(\frac{1}{\lambda^4})$ on $\Sigma\setminus U_r$.
For the second term we note that $\Delta\pi_\lambda\cdot\partial_\lambda\pi_\lambda=0$, so using \eqref{eq: extra j and K bounds} we get
\begin{equation*}
    \Delta z\cdot\partial_\lambda z=\Delta\pi_\lambda\cdot \partial_\lambda j_\lambda^\top+\Delta j_\lambda^\top\cdot \partial_\lambda \pi_\lambda+\mathcal{O}(\frac{1}{\lambda^2}\frac{|x|}{1+\lambda^2|x|^2}).
\end{equation*}
By Taylor expanding we obtain
\begin{align*}
    \Delta\pi_\lambda\cdot \partial_\lambda j_\lambda^\top+\Delta j_\lambda^\top\cdot \partial_\lambda \pi_\lambda
    =&-\frac{1}{\lambda}(x^a\partial_aj\cdot\pi_\lambda)|\nabla\pi_\lambda|^2\\
    =& -\frac{32\lambda}{(1+\lambda^2|x|^2)^3}\bigg((x^1)^2\partial_{x^1y^1}J(0,0)+(x^2)^2\partial_{x^2y^2}J(0,0)\\
    &\qquad\qquad\quad+x^1x^2(\partial_{x^1y^2}J(0,0)+\partial_{x^2y^1}J(0,0))\bigg)\\
    &+\mathcal{O}(\frac{\lambda|x|^3}{(1+\lambda^2|x|^2)^3}).
\end{align*}
Now putting these estimates together along with the fact that we are integrating on a disc, we get
\begin{align}
\begin{split}\label{eq: 4.1 final 2}
  -\int_{\mathbb{D}_r}\Delta z\cdot\partial_\lambda z\text{d}x=&\int_{\mathbb{D}_r}\frac{16\lambda|x|^2}{(1+\lambda^2|x|^2)^3}\mathcal{J}(a)+\mathcal{O}(\frac{\lambda|x|^3}{(1+\lambda^2|x|^2)^3}+\frac{1}{\lambda^2}\frac{|x|}{1+\lambda^2|x|^2})\\
  %=&\frac{32\pi}{\lambda^3}\mathcal{J}(a)\big[-\frac{2s^2+1}{4(1+s^2)^2}\big]^{\lambda r}_0+\mathcal{O}(\frac{1}{\lambda^4})\\
  =&8\pi\mathcal{J}(a)\frac{1}{\lambda^3}+\mathcal{O}(\frac{1}{\lambda^4})
\end{split}
\end{align}
where $\mathcal{J}(a)=\partial_{x^1y^1}J_a(0,0)+\partial_{x^2y^2}J_a(0,0)$.
For the final term we use \eqref{eq: lambda z remainders} and \eqref{eq: Delta squared z metric bound} to get on $\mathbb{D}_r$
\begin{align*}
    (\Delta^g)^2z\cdot \partial_\lambda z=&\frac{1}{c_\gamma^2}\Delta^2\pi_\lambda\cdot\partial\pi_\lambda+\mathcal{O}(\frac{\lambda^2|x|}{(1+\lambda^2|x|^2)^{3}})\\
    =&\frac{256\lambda^5|x|^2}{(1+\lambda^2|x|^2)^5}+\mathcal{O}(\frac{\lambda^2|x|}{(1+\lambda^2|x|^2)^{3}}).
\end{align*}
One can calculate
\begin{align}
\begin{split}\label{eq: 4.1 final 3}
    \int_{\mathbb{D}_r}\frac{|x|^2}{(1+\lambda^2|x|^2)^5}\text{d}x_g
    %=&2\pi\frac{c_\gamma}{\lambda^4}\int_0^{\lambda r}\frac{s^3}{(1+s^2)^5}+\mathcal{O}(\frac{1}{\lambda^6})\\
    =&\frac{\pi}{12}\frac{c_\gamma}{\lambda^4}+\mathcal{O}(\frac{1}{\lambda^6})\\
    \int_{\mathbb{D}_r}\frac{\lambda^2|x|}{(1+\lambda^2|x|^2)^{3}}\text{d}x_g=&\mathcal{O}(\frac{1}{\lambda}).
\end{split}
\end{align}
Combining \eqref{eq: 4.1 final 2} and \eqref{eq: 4.1 final 3} completes the proof.
\end{proof}

\begin{lemma}\label{lemma: nabla A energy}
    For $z\in \mathcal{Z}$ and $0<\varepsilon<1$ we have
    \begin{equation*}
        \nabla_A  E_\varepsilon[z]=4\pi\nabla_A\mathcal{J}(a)\frac{1}{\lambda^2}+\mathcal{O}(\frac{1}{\lambda^3})+\mathcal{O}(\varepsilon\lambda).
    \end{equation*}
\end{lemma}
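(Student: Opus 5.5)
We split $E_\varepsilon[z]=E[z]+\varepsilon\int_\Sigma|\Delta z|^2$ and treat the two parts separately, in the same way as Lemma \ref{lemma: partial lambda energy}.

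\emph{Dirichlet part.} For $\nabla_A E[z]$ we follow \cite{Malchiodi_Rupflin_Sharp} (and \cite{sharp_lowenergyalpha}). There the expansion $E[z]=4\pi-4\pi\mathcal{J}(a)\lambda^{-2}+\mathcal{O}(\lambda^{-3})$ is shown to hold in $C^1$ with respect to the base point $a$. Concretely, we write
\[
\nabla_A E[z]=-\int_\Sigma \Delta z\cdot\nabla_A z.
\]
Moving the base point $a$ only changes the chart $F_a$ by an isometry, so on $\mathbb{D}_r$ we have $\nabla_A z=-A^i\partial_i z+\nabla_A j$ plus lower order terms. The term $\int\Delta z\cdot A^i\partial_i z$ essentially vanishes up to boundary terms, by translation invariance of the Dirichlet integral of $\pi_\lambda$. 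The remaining contribution comes from the $a$-dependence of $J_a$ through $j$. Taylor expanding as in the proof of Lemma \ref{lemma: partial lambda energy}, this gives $4\pi\nabla_A\mathcal{J}(a)\lambda^{-2}$ with an $\mathcal{O}(\lambda^{-3})$ error. On $\Sigma\setminus U_r$ the estimates \eqref{eq: z bounds away from D_r} together with the analogous bounds for $\nabla_A z$ (which is of order $\lambda^{-1}$ there) give $\mathcal{O}(\lambda^{-2})$ pieces. These must cancel against the boundary terms from $\partial U_r$, exactly as in the Dirichlet computation.

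\emph{Biharmonic part.} We do not expand this term to leading order; a crude bound suffices. We write
\[
\varepsilon\nabla_A\int_\Sigma|\Delta z|^2=2\varepsilon\int_\Sigma \Delta z\cdot\Delta\nabla_A z.
\]
On $\Sigma\setminus U_r$ all quantities are bounded, giving $\mathcal{O}(\varepsilon)$. On $\mathbb{D}_r$ the leading part of $\nabla_A z$ is $-A^i\partial_i z$. By translation invariance, $\int_{\mathbb{R}^2}\Delta\pi_\lambda\cdot\Delta\partial_i\pi_\lambda\,dx=0$ (it is the integral of a derivative), so the leading $\lambda^3$-sized term vanishes modulo corrections. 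The corrections come from the metric factor $c_\gamma^{-1}+\mathcal{O}(|x|^2)$ in $\Delta^g$ and from $j$. Using \eqref{eq: Delta z in the ball}, we expect the pointwise bound
\[
|\Delta z\cdot\Delta\nabla_Az-\text{leading}|\le C\frac{\lambda^3|x|}{(1+\lambda^2|x|^2)^3}+C\frac{\lambda^2}{(1+\lambda^2|x|^2)^2}.
\]
Integrating this over $\mathbb{D}_r$ gives $\mathcal{O}(1)$. The worst-case bound comes instead from the crude estimates $|\Delta z|\le C\lambda^2(1+\lambda^2|x|^2)^{-2}$ and $|\Delta\nabla_A z|\le C\lambda^3(1+\lambda^2|x|^2)^{-2}$. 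Combined with symmetry of the leading term, these give $\int\le C\lambda$. Either way the biharmonic contribution is $\mathcal{O}(\varepsilon\lambda)$, which is all the statement claims.

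The main obstacle is the Dirichlet part: identifying precisely the $\lambda^{-2}$ coefficient $4\pi\nabla_A\mathcal{J}(a)$. This requires differentiating $J_a$ in $a$, using $\mathcal{J}(a)=\partial_{x^1y^1}J_a(0,0)+\partial_{x^2y^2}J_a(0,0)$ from Lemma \ref{lemma: partial lambda energy}, and controlling the chart dependence of $F_a$. We would first try to reduce it to the known $C^1$ expansion in \cite{Malchiodi_Rupflin_Sharp} (Section 6). Failing that, we would redo the computation by differentiating the $\mathbb{D}_r$ integrals directly, with the bounds on $\nabla_A z$ from Appendix \ref{section: bounds on z}.
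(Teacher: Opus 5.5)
\textbf{The biharmonic part is fine.} Your bound there matches the paper. The paper integrates by parts to $\varepsilon\int(\Delta^g)^2z\cdot\nabla_Az$. The leading piece $c_\gamma^{-2}\Delta^2\pi_\lambda\cdot(-A^j\partial_j\pi_\lambda)$ is odd in $x$ and integrates to zero, and the remainder is $\mathcal{O}(\lambda)$. Note that your crude product bound alone only gives $\mathcal{O}(\lambda^3)$, so that cancellation is essential.

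\textbf{The gap is the Dirichlet part,} which is the real content of the lemma. You never compute the $\lambda^{-2}$ coefficient, and the mechanism you sketch would not produce it.

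Take your own formula $\nabla_AE[z]=-\int_\Sigma\Delta z\cdot\nabla_Az$. On $\Sigma\setminus U_r$ one has $|\Delta z|=\mathcal{O}(\lambda^{-2})$ and $|\nabla_Az|=\mathcal{O}(\lambda^{-1})$, so that region contributes only $\mathcal{O}(\lambda^{-3})$. There are no $\mathcal{O}(\lambda^{-2})$ outer pieces for anything to cancel against.

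On $\mathbb{D}_r$ the translation part is indeed a pure boundary term:
\[
\int_{\mathbb{D}_r}\Delta z\cdot A^i\partial_iz\,dx=A^i\int_{\partial\mathbb{D}_r}\Big(\partial_\nu z\cdot\partial_iz-\tfrac12|\nabla z|^2\nu_i\Big).
\]
But $|\nabla z|\sim\lambda^{-1}$ on $\partial\mathbb{D}_r$, so this term has size $\lambda^{-2}$. It \emph{is} the term $4\pi\nabla_A\mathcal{J}(a)\lambda^{-2}$, up to $\mathcal{O}(\lambda^{-3})$. Translation invariance of the Dirichlet integral of $\pi_\lambda$ says nothing about the $\pi_\lambda$--$j_\lambda$ cross terms in $z$, and those are what survive.

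Conversely, the explicit dependence of $J_a$ on $a$ (your $\nabla_Aj$) is tangent to $z$ and of size $\mathcal{O}(|x|/\lambda)$. It therefore only pairs with $(\Delta z)^{\top_z}=\mathcal{O}((1+\lambda^2|x|^2)^{-3/2})$ and contributes at most $\mathcal{O}(\lambda^{-3})$. Carried out as written, your plan returns $\mathcal{O}(\lambda^{-3})$ and loses the leading coefficient.

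\textbf{What is missing is the computation of the transport term itself.} The paper writes $\nabla_Az=-A^i\partial_iz-|x|^2A^i\partial_iz+2(A\cdot x)x^i\partial_iz$ on $\mathbb{D}_r$. The $\pi_\lambda$--$\pi_\lambda$ contribution vanishes pointwise, because $\Delta\pi_\lambda=-|\nabla\pi_\lambda|^2\pi_\lambda$ is normal while $\partial_i\pi_\lambda$ is tangent. The surviving cross terms $\Delta\pi_\lambda\cdot\nabla_Aj^\top_\lambda+\Delta j^\top_\lambda\cdot\nabla_A\pi_\lambda$ are Taylor expanded to second order in $x$, which brings in third derivatives of $J_a$ at $(0,0)$.

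The chart dependence you flag is handled by Lemma B.1 of \cite{sharp_lowenergyalpha}:
\[
-\tfrac12\nabla_A\mathcal{J}(a)=A^i\partial_{x^i}\big(\partial^2_{x^1y^1}J_a+\partial^2_{x^2y^2}J_a\big)(0,0).
\]
This turns those third derivatives into $\nabla_A\mathcal{J}(a)$. Then $8\lambda^2\int_{\mathbb{D}_r}|x|^2(1+\lambda^2|x|^2)^{-3}dx=4\pi\lambda^{-2}+\mathcal{O}(\lambda^{-4})$ gives the coefficient.

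Deferring to a $C^1$-in-$a$ version of the energy expansion from the literature would bypass all this. However, the paper does not rely on such a statement; it cites \cite{Malchiodi_Rupflin_Sharp} only for the expansion of $E[z]$ itself. As it stands, your proposal rests on an unverified citation plus a heuristic that points at the wrong term.
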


\begin{proof}

We first note that
\begin{align*}
    \nabla_A  E_\varepsilon[z]=&\int_\Sigma \nabla z\cdot\nabla_A \nabla z+\varepsilon\Delta z\cdot\nabla_A \Delta z\text{d}\Sigma\\
    =&\int_{\Sigma\setminus U_r}-\Delta z\cdot\nabla_A  z+\varepsilon\Delta^2 z\cdot\nabla_A  z\text{d}\Sigma-\int_{\mathbb{D}_r}\Delta z\cdot\nabla_A  z\text{d}x\\
    &+\varepsilon\int_{\mathbb{D}_r}(\Delta^g)^2z\cdot \nabla_A z\text{d}x_g.
\end{align*}
Using \eqref{eq: z bounds away from D_r} and \eqref{eq: nabla A z away from D_r} we have $|-\Delta z\cdot\nabla_A  z+\varepsilon\Delta^2 z\cdot\nabla_A  z|=\mathcal{O}(\frac{1}{\lambda^3})$ on $\Sigma\setminus U_r$.
We calculate, using \eqref{eq: extra j and K bounds} and \eqref{eq: sharper j and K bounds},
\begin{align*}
    \Delta z\cdot\nabla_A  z=&\Delta\pi_\lambda\cdot\nabla_Aj^\top_\lambda+\Delta j^\top_\lambda\cdot\nabla_A \pi_\lambda+\mathcal{O}(\frac{|x|}{(1+\lambda^2|x|^2)^2}+\frac{1}{\lambda^3})\\
    =&|\nabla\pi_\lambda|^2A^m\partial_m j_\lambda\cdot\nabla\pi_\lambda+\mathcal{O}(\frac{|x|}{(1+\lambda^2|x|^2)^2}+\frac{1}{\lambda^3}).
\end{align*}
By Taylor expanding we have
\begin{align*}
    A^m\partial_m j_\lambda\cdot\nabla\pi_\lambda=&\frac{4}{1+\lambda^2|x|^2}\bigg(A^m\partial_mj_\lambda(0)\cdot x\\
    &+(x^1)^2(A^1\partial^3_{x^1x^1y^1}J(0,0)+A^2\partial^3_{x^2x^1y^1}J(0,0))\\
    &+(x^2)^2(A^1\partial^3_{x^1x^2y^2}J(0,0)+A^2\partial^3_{x^2x^2y^2}J(0,0))+Cx^1x^2\bigg)\\
    &+\mathcal{O}(\frac{|x|^3}{1+\lambda^2|x|^2}).
\end{align*}
We recall the following lemma.
\begin{lemma}[Lemma B.1 \cite{sharp_lowenergyalpha}]
With $J_a$ and $\mathcal{J}$ as in the definitions of $\Sigma$ and $z$ we have the following.
    \begin{equation*}
    -\frac{1}{2}\nabla_A\mathcal{J}(a)=A^i\partial_{x^i}(\partial^2_{x^1y^1}J_a(0,0)+\partial^2_{x^2y^2}J_a(0,0)).
\end{equation*}
\end{lemma}
This gives us
\begin{align*}
    -\int_{\mathbb{D}_r}\Delta z\cdot\nabla_A  z\text{d}x=&8\lambda^2\nabla_A\mathcal{J}(a)\int_{\mathbb{D}_r}\frac{|x|^2}{(1+\lambda^2|x|^2)^3}+\mathcal{O}(\frac{1}{\lambda^3})\\
    =&4\pi\nabla_A\mathcal{J}(a)\frac{1}{\lambda^2}+\mathcal{O}(\frac{1}{\lambda^3}).
\end{align*}
For the third term we get, using \eqref{eq: z remainders} and \eqref{eq: Delta squared z metric bound},
\begin{align*}
    (\Delta^g)^2z\cdot \nabla_A z=\frac{1}{c_\gamma^2}\Delta^2\pi_\lambda\cdot(-A^j\nabla_j\pi_\lambda)+\mathcal{O}(\frac{\lambda^3}{(1+\lambda^2|x|^2)^2}).
\end{align*}
Then using \eqref{eq: exact pi derivatives} along with conformality of $\pi_\lambda$ we get
\begin{align*}
    \int_{\mathbb{D}_r}\frac{1}{c_\gamma^2}\Delta^2\pi_\lambda\cdot(-A^j\nabla_j\pi_\lambda) \text{d}x_g
    =&\int_{\mathbb{D}_r}-\frac{1}{c_\gamma^2}\frac{32\lambda^4}{(1+\lambda^2|x|^2)^3}|\nabla\pi_\lambda|^2(A\cdot x)\text{d}x_g=0,\\
    \int_{\mathbb{D}_r}\frac{\lambda^3}{(1+\lambda^2|x|^2)^3} \text{d}x_g=&\mathcal{O}(\lambda).
\end{align*}
Combining these results completes the proof.

\end{proof}

\section{Variation calculations}

We will now prove a number of bounds on the first and second variation of the $\varepsilon$-energy, which we use for the final bound. We shall be concerned with maps with values in $z^*T\mathbb{S}^2$ almost everywhere, so we define
\begin{equation*}
    \Gamma^2(z)=\{V\in W^{2,2}(\Sigma,\mathbb{R}^3) \text{ s.t. }V(x)\in T_{z(x)}\mathbb{S}^2 \text{ a.e.}\}.
\end{equation*}

\subsection{Estimates for $z$}

\begin{lemma}\label{Lemma: Ben's 5.1}
    Let $0\leq \varepsilon,\delta\leq 1$ and $z\in \mathcal{Z}$ with $E_\varepsilon[z]\leq\delta$ and $V\in \Gamma^2(z)$. Then there exists $C(\Sigma)$ such that
    \begin{equation*}
        |\textnormal{d}E_\varepsilon(z)[V]|\leq C(\frac{(\log\lambda)^{1/2}}{\lambda^2}+\varepsilon\lambda^2)\|V\|_z,
    \end{equation*}
    \begin{equation*}
        |\textnormal{d}^2E_\varepsilon[\partial_\lambda z,V]|\leq C\frac{1}{\lambda}(\frac{(\log\lambda)^{1/2}}{\lambda^2}+\varepsilon\lambda^2)\|V\|_z
    \end{equation*}
    and
    \begin{equation*}
        |\textnormal{d}^2E_\varepsilon(z)[\nabla_A z,V]|\leq C\lambda(\frac{(\log\lambda)^{1/2}}{\lambda^2}+\varepsilon\lambda^2)\|V\|_z.
    \end{equation*}
\end{lemma}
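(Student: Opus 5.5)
Each of the three bounds splits into a Dirichlet part and an $\varepsilon$-part. I would treat the Dirichlet part by citing the analogous estimates of Sharp and Rupflin, and bound the $\varepsilon$-part by hand using the pointwise estimates on $z$ from Appendix \ref{section: bounds on z}.

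\textbf{First variation.} By \eqref{eq: variation expansions},
\begin{equation*}
    \textnormal{d}E_\varepsilon(z)[V]=\textnormal{d}E(z)[V]+\varepsilon\int_\Sigma\Delta z\cdot\Delta V .
\end{equation*}
The Dirichlet term $\textnormal{d}E(z)[V]=-\int\Delta z\cdot V=-\int(\Delta z)^{\top}\cdot V$ is the tension of $z$ paired with $V$. The estimate $|\textnormal{d}E(z)[V]|\leq C\lambda^{-2}(\log\lambda)^{1/2}\|V\|_z$ is exactly \cite{rupflin_lojasiewicz} / \cite{sharp_lowenergyalpha} Lemma 5.1, and I would reprove it as follows.
\begin{itemize}
    \item On $\mathbb{D}_r$, $\pi_\lambda$ is harmonic, so the tension of $z=P_{\mathbb{S}^2}(\pi_\lambda+j)$ comes only from $j$ and from the metric factor. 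This gives $|(\Delta z)^\top|\leq C\lambda^{-1}\rho_z$-type bounds (essentially $\frac{1}{\lambda}\frac{\lambda}{1+\lambda^2|x|^2}\cdot\lambda^{-1}$) after using $\Delta_y G$-harmonicity of $j$. Cauchy–Schwarz against $\rho_z V$ then gives $\lambda^{-2}\|V\|_z$.
    \item On $\Sigma\setminus U_r$, $z$ is built from $\lambda^{-1}\nabla G$, which is harmonic up to the constant $2\pi/\mathrm{Vol}$. The tension is therefore $O(\lambda^{-2})$ pointwise, and pairing against $V$ costs $\|V\|_{L^1}\leq K(\log\lambda)^{1/2}\|V\|_z$ by Remark \ref{remark: log estimate}. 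This is where the log enters.
    \item The cutoff region contributes similarly.
\end{itemize}
For the $\varepsilon$ term I integrate by parts once to get $-\varepsilon\int\nabla\Delta z\cdot\nabla V$. Since $|\nabla\Delta z|\leq C\lambda^3(1+\lambda^2|x|^2)^{-2}$ on $\mathbb{D}_r$ and is bounded outside, $\|\nabla\Delta z\|_{L^2}\leq C\lambda^2$. Hence this term is at most $C\varepsilon\lambda^2\|\nabla V\|_{L^2}\leq C\varepsilon\lambda^2\|V\|_z$.

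\textbf{Second variations.} For $T=\partial_\lambda z$ or $\nabla_A z$, \eqref{eq: variation expansions} gives
\begin{equation*}
    \textnormal{d}^2E_\varepsilon(z)[T,V]=\textnormal{d}^2E(z)[T,V]+\varepsilon\int\Big(\Delta T\cdot\Delta V-\Delta z\cdot\Delta\big(z(T\cdot V)\big)\Big).
\end{equation*}
The Dirichlet part is handled by differentiating the first-variation identity along $\mathcal{Z}$. Indeed $\textnormal{d}^2E(z)[T,V]=\partial_s\big(\textnormal{d}E(z_s)[P_{z_s}V]\big)$ up to terms of the form $\textnormal{d}E(z)[\partial_s P\,V]$, which are again controlled by the tension estimate. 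The tension bounds above then scale as $|\partial_\lambda(\text{tension})|\lesssim\lambda^{-1}\times$ and $|\nabla_A(\text{tension})|\lesssim\lambda\times$ the original ones, reproducing \cite{sharp_lowenergyalpha} Lemma 5.1.

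For the $\varepsilon$ part I again move one derivative onto $T$, bounding it by $\varepsilon\|\nabla\Delta T\|_{L^2}\|\nabla V\|_{L^2}$ plus lower-order terms. I use $|\partial_\lambda\pi_\lambda|\sim\lambda^{-1}\cdot\lambda\rho_z|x|$ and $|\nabla_A\pi_\lambda|\sim|\nabla\pi_\lambda|$. The appendix bounds \eqref{eq: lambda z remainders}, \eqref{eq: z remainders} then give $\|\nabla\Delta\partial_\lambda z\|_{L^2}\leq C\lambda$ and $\|\nabla\Delta\nabla_Az\|_{L^2}\leq C\lambda^3$. The mixed term $\Delta z\cdot\Delta(z(T\cdot V))$ is expanded by Leibniz. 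Its pieces carrying no derivatives on $V$ are bounded by $|\Delta z||\Delta(zT)|\lesssim\lambda^2\rho_z^2$ (times $\lambda^{\mp1}$), paired with $\rho_z|V|$. Those carrying derivatives on $V$ are bounded by Cauchy–Schwarz in the same way as before.

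\textbf{Main obstacle.} The hard part is the precise tension estimate with exactly $\lambda^{-2}(\log\lambda)^{1/2}$, and its $\lambda$- and $a$-derivatives. It needs the cancellation from the Green's function correction $j$, and that $\nabla_A$ of the correction is $O(\lambda^{-1})$ rather than $O(1)$. I would import these verbatim from \cite{sharp_lowenergyalpha}, since the Dirichlet parts coincide with that paper.
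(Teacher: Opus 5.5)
Your overall plan works and the final estimates are right, but one intermediate claim in your sketch of the Dirichlet part is false. On $\mathbb{D}_r$ the tension of $z$ is not of size $\lambda^{-1}(1+\lambda^2|x|^2)^{-1}$. To first order it is the Jacobi operator of $\pi_\lambda$ applied to $j^\top$. Using $\Delta j=0$ and the conformality of $\pi_\lambda$, one gets
\begin{equation*}
    (\Delta z)^{\top}\cdot\frac{\partial_k\pi_\lambda}{|\partial_k\pi_\lambda|}\approx-\frac{4\lambda}{1+\lambda^2|x|^2}\,\partial_kj\cdot\pi_\lambda=-\frac{8\lambda^2}{(1+\lambda^2|x|^2)^2}\,x^i\partial_kj^i .
\end{equation*}
This is of size $\lambda^{-1}|x|\rho_z^2$, hence of order one at $|x|\sim\lambda^{-1}$ whenever the matrix $\partial_{x^ky^i}J_a(0,0)$ is nonzero. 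Its trace is $\mathcal{J}(a)$, the very term that drives Lemma \ref{lemma: partial lambda energy}. With the correct bound, Cauchy--Schwarz gives $\lambda^{-1}\big(\int_{\mathbb{D}_r}|x|^2\rho_z^2\big)^{1/2}\|V\|_z\sim\lambda^{-2}(\log\lambda)^{1/2}\|V\|_z$. So the logarithm also comes from the neck region of the disc, not only from $\Sigma\setminus U_r$. The paper's $\mathcal{O}((1+\lambda^2|x|^2)^{-3/2})$ dominates this bound.

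There is a similar issue in the second variation. With \eqref{eq: variation expansions}, the identity $\textnormal{d}^2E(z)[T,V]=\partial_s\big(\textnormal{d}E(z_s)[P_{z_s}V]\big)|_{s=0}$ is exact. Indeed $\partial_s(P_{z_s}V)=-(T\cdot V)z$, and $\textnormal{d}E(z)[-(T\cdot V)z]=-\int|\nabla z|^2\,T\cdot V$ is precisely the curvature term. Such normal-direction terms are \emph{not} controlled by the tension, contrary to what you wrote, so it matters that none are left over. The gain $\lambda^{\mp1}$ for $(\partial_s\tau)^{\top_z}$ then has to be computed; it cannot be read off by differentiating pointwise bounds. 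The paper does this via $\partial_\lambda\Delta\pi_\lambda+|\nabla\pi_\lambda|^2\partial_\lambda\pi_\lambda=-\partial_\lambda(|\nabla\pi_\lambda|^2)\pi_\lambda$ together with $|\pi_\lambda^{\top_z}|\lesssim|x|/\lambda$. Also, $|\partial_\lambda\pi_\lambda|\sim\lambda^{-1}|x|\rho_z$, not $|x|\rho_z$. Since you ultimately import the Dirichlet parts from \cite{sharp_lowenergyalpha}, none of this breaks the proof, but the sketch as written would not establish them.

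For the $\varepsilon$-terms your argument is correct and differs from the paper's.

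The paper integrates by parts twice, writing the $\varepsilon$-parts as $\varepsilon\int\Delta^2z\cdot V$ and $\varepsilon\int(\Delta^2T-(\Delta^2z\cdot z)T)\cdot V$. It then uses the global pointwise bounds $|\Delta^2z|\le C\lambda^2\rho_z^2$, $|\partial_\lambda\Delta^2z|\le C\lambda\rho_z^2$ and $|\Delta^2\nabla_Az|\le C\lambda^3\rho_z^2$ against $\rho_z|V|$, together with $\int_\Sigma\rho_z^2\le C$.

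You integrate by parts once and pair $\|\nabla\Delta z\|_{L^2}\lesssim\lambda^2$, $\|\nabla\Delta\partial_\lambda z\|_{L^2}\lesssim\lambda$ and $\|\nabla\Delta\nabla_Az\|_{L^2}\lesssim\lambda^3$ with $\|\nabla V\|_{L^2}$. These bounds do follow from the appendix estimates.

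Your route needs one fewer derivative of $z$. Its cost is the Leibniz bookkeeping in $\Delta z\cdot\Delta(z(T\cdot V))$: the piece $-|\nabla z|^2\,T\cdot\Delta V$ needs its own integration by parts. This is harmless, since $\|\nabla(|\nabla z|^2T)\|_{L^2}\lesssim\lambda$, respectively $\lambda^3$. The paper's route is more uniform and reuses \eqref{eq: Delta^2 z global bound}, which it needs anyway in Lemma \ref{lemma: non-degeneracy} and Lemma \ref{Lemma: Ben's 5.2}.
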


\begin{proof}

We first note that
\begin{align*}
    \text{d}E_\varepsilon(z)[V]=&\int_\Sigma -\Delta z\cdot V+\varepsilon\Delta^2 z\cdot V\text{d}\Sigma\\
    =&\int_{\Sigma\setminus U_r(a)} -\Delta z \cdot V\text{d}\Sigma-\int_{\mathbb{D}_r}\Delta z\cdot V\text{d}x+\varepsilon\int_{\Sigma}\Delta^2 z\cdot V\text{d}\Sigma
\end{align*}
For the first term we have that, using \eqref{eq: z bounds away from D_r} and Remark \ref{remark: log estimate},
\begin{equation}\label{eq: 5.1.1 final 1}
    |\int_{\Sigma\setminus U_r(a)} -\Delta z \cdot V|\leq C\frac{1}{\lambda^2}\int_{\Sigma}|V|\leq C\frac{(\log \lambda)^{1/2}}{\lambda^2}\|V\|_z.
\end{equation}
For the second term note that, as $V\in\Gamma^2(z)$, we only need to consider the part of $\Delta z$ tangential to $z$.
So using \eqref{eq: Delta z in the ball} we get
\begin{align*}
    (\Delta z)^{\top_z}=&(\Delta z)^{\top_\pi}+\mathcal{O}(\frac{\lambda|x|}{(1+\lambda^2|x|^2)^2})\\
    =&(\Delta \pi+\mathcal{O}(\frac{1}{(1+\lambda^2|x|^2)^{3/2}}))^{\top_\pi}+\mathcal{O}(\frac{\lambda|x|}{(1+\lambda^2|x|^2)^2})\\
    =&\mathcal{O}(\frac{1}{(1+\lambda^2|x|^2)^{3/2}}).
\end{align*}
Where we have used that $|y^{\top_z}-y^{\top_\pi}|\leq |j||y|\leq \frac{|x|}{\lambda}|y|$ for any $y\in\mathbb{R}^3$.
This gives
\begin{align}
\begin{split}\label{eq: 5.1.1 final 2}
    |\int_{\mathbb{D}_r}\Delta z\cdot V|\leq&C\int_{\mathbb{D}_r}\frac{1}{(1+\lambda^2|x|^2)^{3/2}}|V|\\
    \leq&C\bigg(\int_{\mathbb{D}_r}\frac{1}{\lambda^2(1+\lambda^2|x|^2)}\bigg)^{1/2}\bigg(\int_{\mathbb{D}_r}\rho_z^2|V|\bigg)^{1/2}\\
    \leq& C\frac{(\log \lambda)^{1/2}}{\lambda^2}\|V\|_z.
\end{split}
\end{align}
For the third term we use \eqref{eq: Delta^2 z global bound} to get
\begin{align}
\begin{split}\label{eq: 5.1.1 final 3}
    |\int_{\Sigma}\Delta^2 z\cdot V|\leq\int_\Sigma\lambda^2\rho_z^2|V|\leq&\bigg(\int_{\Sigma}\lambda^4\rho_z^2\bigg)^{1/2}\bigg(\int_{\Sigma}\rho_z^2 |V|^2\bigg)^{1/2}\\
    \leq& C\lambda^2\|V\|_z.
\end{split}
\end{align}
Combining \eqref{eq: 5.1.1 final 1}, \eqref{eq: 5.1.1 final 2} and \eqref{eq: 5.1.1 final 3} gives the first inequality.

\vspace{5mm}

For the second inequality we first note that we have, using Remark \ref{remark: log estimate}, \eqref{eq: variation expansions}, \eqref{eq: z bounds away from D_r} and \eqref{eq: lambda z bounds away from D_r},
\begin{align*}
    |\text{d}^2E_\varepsilon(z)[\partial_\lambda z,V]|=&|\int_\Sigma -(\partial_\lambda \Delta z+ |\nabla z|^2\partial_\lambda z)\cdot V+\varepsilon(\partial_\lambda\Delta^2 z-(\Delta^2 z\cdot z)\partial_\lambda z)\cdot V\text{d}\Sigma|\\
    \leq&C\frac{(\log \lambda)^{1/2}}{\lambda^3}\|V\|_z-\int_{\mathbb{D}_r}(\partial_\lambda \Delta z+|\nabla z|^2\partial_\lambda z)\cdot V\text{d}x\\
    &\quad\quad+\varepsilon\int_{\Sigma}(\partial_\lambda\Delta^2 z-(\Delta^2 z\cdot z)\partial_\lambda z)\cdot V\text{d}\Sigma.
\end{align*}
For the second term we note that from \eqref{eq: z remainders}, \eqref{eq: lambda z remainders} and \eqref{eq: Delta lambda z tangent bound} we get
\begin{align*}
    (\partial_\lambda\Delta z+
    |\nabla z|^2\partial_\lambda z)^{\top_z}=&(\partial_\lambda\Delta \pi_\lambda +|\nabla \pi_\lambda |^2\partial_\lambda\pi_\lambda)^{\top_z}  +\mathcal{O}(\frac{|x|}{\lambda^2}\rho_z^2).
\end{align*}
We can then calculate
\begin{equation*}
    (\partial_\lambda\Delta \pi_\lambda +|\nabla \pi_\lambda |^2\partial_\lambda\pi_\lambda )^{\top_z}=-\partial_\lambda(|\nabla \pi_\lambda |^2)\pi_\lambda^{\top_z}=\mathcal{O}(\frac{|x|}{\lambda^2}\rho_z^2).
\end{equation*}
Which then gives
\begin{equation}\label{eq: 5.1.2 final 2}
        \begin{split}
        \int_{\mathbb{D}_r}(\partial_\lambda \Delta z+|\nabla z|^2\partial_\lambda z)\cdot V\text{d}x
        \leq&\frac{1}{\lambda^2}\int_{\mathbb{D}_r}|x|\rho_z^2|V|\\
        \leq&\frac{1}{\lambda^2}(\int_{\mathbb{D}_r}|x|^2\rho_z^2)^{1/2}(\int_{\mathbb{D}_r}\rho_z^2|V|^2)^{1/2}
        \leq C\frac{(\log\lambda)^{1/2}}{\lambda^3}\|V\|_z.
        \end{split}
\end{equation}
For the third term we note that using \eqref{eq: lambda z remainders}, \eqref{eq: Delta^2 z global bound} and \eqref{eq: lambda delta^2 z global bound} we have globally
\begin{equation*}
    |\partial_\lambda(\Delta^g)^2 z-((\Delta^g)^2 z\cdot z)\partial_\lambda z|=\mathcal{O}(\lambda\rho_z^2)
\end{equation*}
giving
\begin{equation}\label{eq: 5.1.2 final 3}
    |\int_{\mathbb{D}_r}(\partial_\lambda(\Delta^g)^2 z-((\Delta^g)^2 z\cdot z)\partial_\lambda z)\cdot V\text{d}x_g|\leq C\lambda\int_{\mathbb{D}_r}\rho_z^2|V|
    \leq C \lambda\|V\|_z .
\end{equation}
Combining \eqref{eq: 5.1.2 final 2} and \eqref{eq: 5.1.2 final 3} gives the second inequality.

\vspace{5mm}

Now for the final inequality we have, using \eqref{eq: variation expansions}, \eqref{eq: z bounds away from D_r} and Remark \ref{remark: log estimate},
\begin{align*}
    |\text{d}^2E_\varepsilon(z)[\nabla_A z,V]|\leq&C \frac{(\log\lambda)^{1/2}}{\lambda}\|V\|_z+|\int_{\mathbb{D}_r}  \nabla \nabla_A z\cdot \nabla V-(|\nabla z|^2\nabla_A z)\cdot V\text{d}x|\\
    &+\varepsilon|\int_\Sigma(\Delta^2 \nabla_Az -(\Delta^2 z\cdot z) \nabla_Az)\cdot  V\text{d}\Sigma|.\\
\end{align*}
From \eqref{eq: z remainders} and \eqref{eq: nabla A z remainders} we have on $\mathbb{D}_r$
\begin{align*}
     \nabla \nabla_A z=& A^j\partial_j\nabla \pi_\lambda+\mathcal{O}(\frac{1}{(1+\lambda^2|x|^2)^{1/2}})\\
     |\nabla z|^2\nabla_A z =& |\nabla\pi_\lambda|^2A^j\partial_j \pi_\lambda+\mathcal{O}(\frac{\lambda}{(1+\lambda^2|x|^2)^2}).
\end{align*}
We calculate
\begin{equation*}
    \int_{\mathbb{D}_r}\frac{1}{(1+\lambda^2|x|^2)^{1/2}}|\nabla V|+\frac{\lambda}{(1+\lambda^2|x|^2)^2}|V|\leq C\frac{(\log\lambda)^{1/2}}{\lambda}\|V\|_z.
\end{equation*}
We also find that
\begin{equation*}
    (A^j\partial_j\Delta \pi_\lambda+ |\nabla \pi_\lambda|^2A^j\partial_j \pi_\lambda)^{\top_z}=-(A^j\partial_j|\nabla \pi_\lambda|^2)(\pi_\lambda)^{\top_z}=\mathcal{O}(\frac{|x|^2\lambda^3}{(1+\lambda^2|x|^2)^{3}})
\end{equation*}
which gives, further using Remark \ref{remark: log estimate},
\begin{align}
\begin{split}\label{eq: 5.1.3 final 2}
    |\int_{\mathbb{D}_r}  A^j\partial_j\nabla  \pi_\lambda\cdot \nabla V-(|\nabla \pi_\lambda|^2A^j\partial_j \pi_\lambda)\cdot V|\leq&\int_{\partial\mathbb{D}_r}|A^j\partial_j\nabla \pi_\lambda|\cdot|V|\\&+ |\int_{\mathbb{D}_r}(A^j\partial_j\Delta \pi_\lambda+ |\nabla \pi_\lambda|^2A^j\partial_j \pi_\lambda)\cdot V|\\
    \leq &C\frac{1}{\lambda}\int_{\partial\mathbb{D}_r}|V|+C\int_{\mathbb{D}_r}\frac{|x|^2\lambda^2}{(1+\lambda^2|x|^2)^{2}}\rho_z|V|\\
    \leq & \frac{(\log\lambda)^{1/2}}{\lambda}\|V\|_z.
\end{split}
\end{align}
We also have from \eqref{eq: Delta^2 z global bound} and \eqref{eq: delta^2 nabla A z global bound} the estimate that globally
\begin{equation*}
    \Delta^2\nabla_Az+(\Delta^2 z\cdot z) \nabla_A z=\mathcal{O}(\lambda^3\rho_z^2).
\end{equation*}
Which gives
\begin{equation}\label{eq: 5.1.3 final 3}
    |\int_\Sigma (\Delta^2\nabla_Az+(\Delta^2 z\cdot z) \nabla_A z)\cdot V|\leq C\lambda^3\int_\Sigma\rho_z^2|V|\leq C\lambda^3\|V\|_z.
\end{equation}
Combining \eqref{eq: 5.1.3 final 2} and \eqref{eq: 5.1.3 final 3} completes the proof.

\end{proof}

\subsection{Difference estimates}

\begin{lemma}\label{Lemma: Ben's 5.2}
    Given $u\in W^{2,\infty}(\Sigma,\mathbb{S}^2)$ and $z\in\mathcal{Z}$ with $\|u-z\|_{L^\infty}\leq \frac{1}{2}$. Set $w=u-z$, $u_t=P(z+tw)$, $W_t=\textnormal{d}P(z+tw)[w]$ and $W=W_0$. Then there exists some constant $C=C(\Sigma)$ such that
    \begin{equation*}
        |\textnormal{d}^2E_\varepsilon(z)[W,W]-\textnormal{d}^2E_\varepsilon(u_t)[W_t,W_t]|\leq C(\|w\|_{L^\infty}\|w\|_z^2+\varepsilon(\lambda^2+\|\nabla w\|^2_{L^{\infty}})\|w\|_z^2+\varepsilon\|\Delta w\|_{L^2}^2)
    \end{equation*}
    and
    \begin{equation*}
        |\int_\Sigma \textnormal{d}E_\varepsilon(u_t)[\textnormal{d}P(u_t)[\partial_tW_t]]|\leq  C(\|w\|_{L^\infty}\|w\|_z^2+\varepsilon(\lambda^2+\|\nabla w\|^2_{L^\infty})\|w\|_z^2+\varepsilon\|\Delta w\|_{L^2}^2).
    \end{equation*}
\end{lemma}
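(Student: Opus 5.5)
The plan is to treat the Dirichlet part and the $\varepsilon$-biharmonic part of \eqref{eq: variation expansions} separately. For the Dirichlet part we can quote the corresponding estimate for $E$ from \cite{sharp_lowenergyalpha} (Lemma 5.2 there, with $\alpha=1$) or \cite{rupflin_lojasiewicz}, which gives a bound $C\|w\|_{L^\infty}\|w\|_z^2$ for both quantities. We would also redo it briefly to fix notation. The key pointwise facts follow from smoothness of $P$ near $\mathbb{S}^2$ together with $\|w\|_{L^\infty}\le\frac12$:
\[
W_t=W+\mathcal{O}(t|w|^2),\qquad \partial_tW_t=\mathrm{d}^2P(z+tw)[w,w]=\mathcal{O}(|w|^2),\qquad u_t-z=\mathcal{O}(|w|).
\]
Differentiating these gives
\[
\nabla W_t-\nabla W=\mathcal{O}\big(|w||\nabla w|+|w|^2(|\nabla z|+|\nabla w|)\big).
\]
We also use $|\nabla z|\le C\rho_z$ from \eqref{eq: z remainders} and $|w|\le C$. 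With these, every term in the difference of the Dirichlet second variations, and in $\mathrm{d}E(u_t)[\mathrm{d}P(u_t)\partial_tW_t]=\int\nabla u_t\cdot\nabla(\mathrm{d}P(u_t)\partial_tW_t)$, is bounded by $\|w\|_{L^\infty}\int(|\nabla w|^2+\rho_z^2|w|^2)$. For the potential term $|\nabla u_t|^2-|\nabla z|^2$ we expand $|\nabla u_t|^2\le C(|\nabla z|^2+|\nabla w|^2)$ and absorb the $|w|^2$ factor into one $\|w\|_{L^\infty}$.

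For the $\varepsilon$-terms we compare
\[
\varepsilon\int|\Delta W|^2-\Delta z\cdot\Delta(z|W|^2)
\quad\text{and}\quad
\varepsilon\int|\Delta W_t|^2-\Delta u_t\cdot\Delta(u_t|W_t|^2).
\]
Here we do not need smallness: it suffices to bound each term separately by the right-hand side. We expand $\Delta W_t=\mathrm{d}P[\Delta w]+2\,\mathrm{d}^2P[\nabla(z+tw),\nabla w]+\mathrm{d}^2P[\Delta(z+tw),w]+\mathrm{d}^3P[\nabla(z+tw),\nabla(z+tw),w]$. Squaring gives $|\Delta w|^2$, which contributes $\varepsilon\|\Delta w\|_{L^2}^2$. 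It also gives terms of the form $(|\nabla z|^2+|\nabla w|^2)|\nabla w|^2$ and $(|\Delta z|^2+|\nabla z|^4+|\nabla w|^4)|w|^2+|\Delta w|^2|w|^2$. Using $|\nabla z|\le C\lambda$, $|\Delta z|^2+|\nabla z|^4\le C\lambda^2\rho_z^2$ (from Appendix \ref{section: bounds on z}) and $\|\nabla w\|_{L^\infty}$, these are at most $C(\lambda^2+\|\nabla w\|_{L^\infty}^2)\|w\|_z^2+C\|\Delta w\|_{L^2}^2$, as required. For the curvature term $\Delta u_t\cdot\Delta(u_t|W_t|^2)$ we integrate by parts once, or expand it directly. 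Because $u_t\cdot\Delta u_t=-|\nabla u_t|^2$, the worst part $(\Delta u_t\cdot u_t)\Delta|W_t|^2$ becomes $-\int|\nabla u_t|^2\Delta|W_t|^2=\int\nabla|\nabla u_t|^2\cdot\nabla|W_t|^2$. This is then bounded by $\int|\nabla u_t||\nabla^2u_t||W_t||\nabla W_t|$, with $|\nabla^2u_t|\le C(|\nabla^2z|+|\nabla^2w|+\dots)$, followed by Cauchy–Schwarz. The remaining terms $(\Delta u_t)^{\top}$ paired with $\nabla u_t\,\nabla|W_t|^2$ are handled in the same way.

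The second inequality follows the same pattern. We have $\mathrm{d}E_\varepsilon(u_t)[V]$ with $V=\mathrm{d}P(u_t)\partial_tW_t=\mathcal{O}(|w|^2)$, which is quadratic in $w$. The Dirichlet part gives $\|w\|_{L^\infty}\|w\|_z^2$ as above. The part $\varepsilon\int\Delta u_t\cdot\Delta V$ is expanded like $\Delta W_t$, and then each factor $\Delta u_t$ with $\Delta V\sim|w||\Delta w|+|\nabla w|^2+\dots$ is estimated by Young's inequality into $\varepsilon|\Delta w|^2$ plus $\varepsilon(|\Delta z|^2+|\nabla z|^4+|\nabla w|^4)|w|^2$ and similar terms.

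The main obstacle is the term $\int|\nabla^2 z|^2|w|^2$ and its relatives. It forces the pointwise bound $|\nabla^2 z|\le C\lambda\rho_z$, i.e. $\lambda^3/(1+\lambda^2|x|^2)^2\le\lambda\cdot\lambda/(1+\lambda^2|x|^2)$, which must be checked against the appendix estimates both in $\mathbb{D}_r$ and away from it. It also requires mixed terms such as $|\nabla z||\nabla w||\Delta w||w|$ to be split by Young's inequality so that no uncontrolled $\|\nabla^2 w\|$ appears without the $\varepsilon$ and $\|\Delta w\|_{L^2}$ structure. Where the full Hessian of $w$ appears, I would replace $\|\nabla^2w\|_{L^2}$ by $\|\Delta w\|_{L^2}+\|\nabla w\|_{L^2}$ using elliptic regularity on closed $\Sigma$, with constant $C(\Sigma)$.
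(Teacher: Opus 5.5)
You handle the $\varepsilon$-terms correctly, and your route differs from the paper's only cosmetically. The right-hand side carries no smallness factor in front of $\varepsilon$, so you may bound the $\varepsilon$-parts of $\mathrm{d}^2E_\varepsilon(z)[W,W]$ and $\mathrm{d}^2E_\varepsilon(u_t)[W_t,W_t]$ separately rather than telescoping as the paper does. The integration by parts and the elliptic-regularity step are unnecessary, though. The pointwise bound $|\Delta u_t|\le C(\rho_z^2+\rho_z|\nabla w|+|\nabla w|^2+|\Delta w|)$, multiplied by the analogous bound for $|\Delta(u_t|W_t|^2)|$, already closes by Young's inequality and $\rho_z\le\lambda$. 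Separately, the inequality $\lambda^3(1+\lambda^2|x|^2)^{-2}\le\lambda^2(1+\lambda^2|x|^2)^{-1}$ you wrote fails at $x=0$. The correct Hessian bound is $|\nabla^2z|\le C\lambda^2(1+\lambda^2|x|^2)^{-3/2}\le C\lambda\rho_z$.

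The genuine gap is in the Dirichlet part. That is the only place the factor $\|w\|_{L^\infty}$ can come from, and your sketch loses it in two places.

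First, for $(|\nabla z|^2-|\nabla u_t|^2)|W_t|^2$, bounding $|\nabla u_t|^2\le C(|\nabla z|^2+|\nabla w|^2)$ leaves $\int\rho_z^2|w|^2$, which is only $\le\|w\|_z^2$. An integrand with two powers of $w$ cannot be split as $\|w\|_{L^\infty}\|w\|_z^2$. You must exploit the difference, writing $|\nabla z|^2-|\nabla u_t|^2=\nabla(z-u_t)\cdot\nabla(z+u_t)$ with $|\nabla(u_t-z)|\le C(|\nabla w|+|w|\rho_z)$. The integrand then becomes $C(|\nabla w|+|w|\rho_z)(|\nabla w|+\rho_z)|w|^2$, which is cubic.

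Second, in the second inequality, knowing only $V:=\mathrm{d}P(u_t)[\partial_tW_t]=\mathcal{O}(|w|^2)$ gives $\int|\nabla u_t||\nabla V|\le C\int\rho_z(|w||\nabla w|+\rho_z|w|^2)\le C\|w\|_z^2$, again with no smallness. The missing idea is that the tangential projection makes $V$ cubic:
\[
\mathrm{d}P(u_t)[\partial_tW_t]=-\frac{2(w\cdot u_t)}{|z+tw|}W_t,\qquad w\cdot u_t=w\cdot(u_t-z)-\tfrac12|w|^2=\mathcal{O}(|w|^2).
\]
The last identity uses $|z+w|=|z|=1$, i.e. $2z\cdot w=-|w|^2$. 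Hence $|V|\le C|w|^3$ and $|\nabla V|\le C(|w|^2|\nabla w|+|w|^3\rho_z)$, which gives $\int|\nabla u_t||\nabla V|\le C\|w\|_{L^\infty}\|w\|_z^2$.

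These two points are exactly how the paper obtains the $\|w\|_{L^\infty}$ factor. Quoting Sharp's Lemma 5.2 at $\alpha=1$ may cover the Dirichlet part, but the argument you give in its place does not.
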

\begin{proof}

We can calculate $w^{\top_z}=(w\cdot z)z=-\frac{1}{2}|w|^2z$ and $W=w+\frac{1}{2}|w|^2z$.
We then have
\begin{equation*}
    \nabla W=\nabla w+(w\cdot\nabla w)z+\frac{1}{2}|w|^2\nabla z.
\end{equation*}
This gives, using the uniform $L^\infty$ bound on $w$, that
\begin{comment}
\begin{align}
\label{eq: w and W equiv.}
    \frac{3}{4}\|w\|_{L^\infty}\leq\|W\|_{L^\infty}\leq \frac{5}{4}\|w\|_{L^\infty},
    \quad\quad \frac{1}{32}\|w\|_z^2\leq\|W\|_z^2\leq \frac{9}{2}\|w\|_z^2.
\end{align}
\end{comment}
\begin{align}
\label{eq: w and W equiv.}
    C_1\|w\|_{L^\infty}\leq\|W\|_{L^\infty}\leq C_2\|w\|_{L^\infty},
    \quad\quad C_3\|w\|_z^2\leq\|W\|_z^2\leq C_4\|w\|_z^2.
\end{align}
We then have the bounds on $W_t$ from the definitions, using that $|\nabla z|\leq C\rho_z$ from \eqref{eq: z remainders} and \eqref{eq: z bounds away from D_r},
\begin{equation}\label{eq: first W_t bounds}
\begin{split}
    |W_t|\leq& C|w|,\\
    |\nabla W_t|\leq& C(|\nabla w|+|w|\rho_z),\\
\end{split}\hspace{10mm}
\begin{split}
    |W_t-W|\leq& C|w|^2,\\
    |\nabla (W_t- W)|\leq& C(|w|\cdot|\nabla w|+|w|^2\rho_z).
\end{split}
\end{equation}
Also note that
\begin{align*}
    \Delta W_t=& dP(z+tw)[\Delta w]+2d^2P(z+tw)[\nabla w,\nabla z+t\nabla w]\\
    &+d^2P(z+tw)[w,\Delta z+t\Delta w]
    +d^3P(z+tw)[w,\nabla z+t\nabla w,\nabla z+t\nabla w].
\end{align*}
Which gives us the bounds, also using \eqref{eq: Delta z in the ball} and \eqref{eq: z bounds away from D_r},
\begin{align}
\begin{split}\label{eq: Delta W_t bounds}
    |\Delta W_t|\leq& C(|\Delta w|+|\nabla w|\rho_z+|\nabla w|^2+|w|\rho_z^2),\\
    |\Delta (W_t-W)|\leq & C(|w|\cdot|\Delta w|+|w|\cdot|\nabla w|\rho_z+|\nabla w|^2+|w|^2\rho_z^2).
\end{split}
\end{align}
We also note that
\begin{equation*}
    \Delta u_t=\text{d}P(z+tw)[\Delta z+t\Delta w]+\text{d}P^2(z+tw)[\nabla z+t\nabla w,\nabla z+t\nabla w]
\end{equation*}
giving the bounds on $u_t$, using the fact that $z=u_0$,
\begin{equation}\label{eq: u_t bounds}
\begin{split}
    |u_t|\leq& C\\
    |\nabla u_t|\leq& C(|\nabla w|+\rho_z)\\
    |\Delta u_t|\leq &C(|\nabla w|^2+|\nabla w|\rho_z\\&+\rho_z^2+|\Delta w|)\\
\end{split}
\hspace{10mm}
\begin{split}
    |u_t-z|\leq& C|w|\\
    |\nabla (u_t-z)|\leq& C(|\nabla w|+|w|\rho_z)\\
    |\Delta (u_t-z)|\leq& C(|\nabla w|^2+|\nabla w|\rho_z\\&+|w|\rho_z^2+|\Delta w|).\\
\end{split}
\end{equation}
We can then calculate, using \eqref{eq: variation expansions},
\begin{align*}
    &\text{d}^2E_\varepsilon(z)[W,W]-\text{d}^2E_\varepsilon(u_t)[W_t,W_t]=\\ &\int_\Sigma \nabla (W-W_t)\cdot\nabla(W+W_t)
    -|\nabla z|^2(|W|^2-|W_t|^2)
    -(|\nabla z|^2-|\nabla u_t|^2)|W_t|^2\\&+\varepsilon\Delta (W-W_t)\cdot\Delta(W+W_t)
    -\varepsilon\Delta^2z\cdot(z|W|^2-u_t|W_t|^2)-\varepsilon\Delta(z-u_t)\cdot\Delta(u_t|W_t|^2).
\end{align*}
So then using \eqref{eq: first W_t bounds}, \eqref{eq: Delta W_t bounds} and \eqref{eq: u_t bounds} and working through the terms we get
\begin{align*}
    |\int_\Sigma \nabla (W-W_t)\cdot\nabla(W+W_t)|
    \leq& C\int_\Sigma (|w|\cdot|\nabla w|+|w|^2\rho_z)(|\nabla w|+|w|\rho_z)\\
    \leq& C\|w\|_{L^\infty}\|w\|_z^2,\\
    |\int_\Sigma |\nabla z|^2(|W|^2-|W_t|^2)|=&|\int_\Sigma |\nabla z|^2 (W-W_t)\cdot(W+W_t)|\\
    \leq&C\int_\Sigma |w|^3\rho_z^2\\
    \leq&  C\|w\|_{L^\infty}\|w\|_z^2,\\
    |\int_\Sigma(|\nabla z|^2-|\nabla u_t|^2)|W_t|^2|=&
    |\int_\Sigma(\nabla z-\nabla u_t)\cdot(\nabla z+\nabla u_t)|W_t|^2|\\
    \leq&C\int_\Sigma(|\nabla w|+|w|\rho_z)(|\nabla w|+\rho_z)|w|^2\\
    \leq&  C\|w\|_{L^\infty}\|w\|_z^2,\\
    |\int_\Sigma \Delta (W-W_t)\cdot\Delta (W+W_t)|\leq& C\int_\Sigma(|\Delta w|+|\nabla w|\rho_z+|\nabla w|^2+|w|\rho_z^2)\\
    &\quad\cdot(|w|\cdot|\Delta w|+|w|\cdot|\nabla w|\rho_z+|\nabla w|^2+|w|^2\rho_z^2)\\
    \leq&C((\lambda^2+\|\nabla w\|^2_{L^\infty})\|w\|_z^2+\|\Delta w\|_{L^2}^2).
\end{align*}
For the next term note that
\begin{equation*}
    z|W|^2-u_t|W_t|^2=(z-u_t)|W|^2+u_t(W-W_t)\cdot (W+W_t)
\end{equation*}
so using \eqref{eq: first W_t bounds}, \eqref{eq: u_t bounds} and \eqref{eq: Delta^2 z global bound}
\begin{equation*}
   | \int_\Sigma \Delta^2z\cdot(z|W|^2-u_t|W_t|^2)|\leq C\int_\Sigma\lambda^2\rho_z^2|w|^3\leq C\lambda^2\|w\|^2_z.
\end{equation*}
For the final term we calculate, using \eqref{eq: first W_t bounds}, \eqref{eq: Delta W_t bounds} and \eqref{eq: u_t bounds}, that
\begin{align*}
    \Delta(u_t|W_t|^2)=&(\Delta u_t)|W_t|^2+4\nabla u_t\cdot\nabla W_t\cdot W_t+2u_t(\Delta W_t\cdot W_t+|\nabla W_t|^2)\\
    \leq& C(|w|^2\rho_z^2+|w|\cdot|\nabla w|\rho_z+|\nabla w|^2+|w|\cdot|\Delta w|).
\end{align*}
So using \eqref{eq: u_t bounds}
\begin{align*}
    &|\int_\Sigma \Delta(z-u_t)\cdot\Delta(u_t|W_t|^2)|\\
    &\leq C\int_\Sigma(|\nabla w|^2+|\nabla w|\rho_z+|w|\rho_z^2+|\Delta w|)\\
    &\quad\quad\cdot(|w|^2\rho_z^2+|w|\cdot|\nabla w|\rho_z+|\nabla w|^2+|w|\cdot|\Delta w|)\\
    &\leq C((\lambda^2+\|\nabla w\|^2_{L^\infty})\|w\|_z^2+\|\Delta w\|^2_{L^2}).
\end{align*}
Combining these estimates gives the first equation.

\vspace{5mm}

For the second equation we first note that
\begin{align*}
    W_t=\partial_tu_t=&\partial_t\frac{z+tw}{|z+tw|}\\
    =&\frac{w}{|z+tw|}-\frac{w\cdot(z+tw)}{|z+tw|^3}(z+tw)\\
    =&\frac{1}{|z+tw|}\text{d}P(u_t)[w].
\end{align*}
We then find that
\begin{align*}
    \text{d}P(u_t)[\partial_t W_t]
    %=&\text{d}P(u_t)[-2\frac{w\cdot(z+tw)}{|z+tw|^3}w+(-\frac{|w|^2}{|z+tw|^3}+3\frac{|w\cdot(z+tw)|^2}{|z+tw|^5})(z+tw)]\\
    =&-2\frac{1}{|z+tw|}(w\cdot u_t)W_t\\
    =&\frac{1}{|z+tw|}(|w|^2-2w\cdot (u_t-z))W_t
\end{align*}
using $w\cdot(w+2z)=0$.
Now directly differentiating and using \eqref{eq: w and W equiv.}, \eqref{eq: first W_t bounds}, \eqref{eq: Delta W_t bounds} and \eqref{eq: u_t bounds} gives
\begin{align}
\begin{split}\label{eq: dP of partial_t W_t bounds}
    |\text{d}P(u_t)[\partial_tW_t]|\leq& C |w|^3,\\
    |\nabla\text{d}P(u_t)[\partial_tW_t]|\leq&C(|w|^2|\nabla w|+|w|^3\rho_z),\\
    |\Delta\text{d}P(u_t)[\partial_tW_t]|\leq&C(|w|^2|\Delta w|+|w|\cdot|\nabla w|^2+|w|^2|\nabla w|\rho_z+|w|^3\rho_z^2).
\end{split}
\end{align}
This then gives, using \eqref{eq: variation expansions}, \eqref{eq: u_t bounds} and \eqref{eq: dP of partial_t W_t bounds},
\begin{align*}
    |\int_\Sigma \text{d}E_\varepsilon(u_t)[\text{d}P(u_t)[\partial_tW_t]]|\leq &C\int_\Sigma(|\nabla w|+\rho_z)(|w|^2|\nabla w|+|w|^3\rho_z)\\
    &+C\varepsilon\int_\Sigma(|\nabla w|^2+|\nabla w|\rho_z+\rho_z^2+|\Delta w|)\\
    &\quad\quad\cdot(|w|^2|\Delta w|+|w|\cdot|\nabla w|^2+|w|^2|\nabla w|\rho_z+|w|^3\rho_z^2)\\
    \leq&C(\|w\|_{L^\infty}\|w\|_z^2+\varepsilon(\lambda^2+\|\nabla w\|^2_{L^\infty})\|w\|_z^2+\varepsilon\|\Delta w\|_{L^2}^2)
\end{align*}
completing the proof.

\end{proof}

\begin{lemma}\label{Lemma: Ben's 5.3}
    Given $u\in W^{2,\infty}(\Sigma,\mathbb{S}^2)$ and $z\in\mathcal{Z}$ with $\|u-z\|_{L^\infty}\leq \frac{1}{2}$. Set $w=u-z$, $u_t=P(z+tw)$, $W_t=\textnormal{d}P(z+tw)[w]$ and $W=W_0$. Also given $T\in\Gamma^2(z)$ set $T_t=\textnormal{d}P(u_t)[T]$. Then there exists some constant  $C=C(\Sigma)$ such that
    \begin{enumerate}
        \item If $T=\partial_\lambda z$ then,
        \begin{equation*}
            |\textnormal{d}^2E_\varepsilon(z)[T,W]-\textnormal{d}^2E_\varepsilon(z)[T_t,W_t]|\leq C\frac{1}{\lambda}(\|w\|_z^2+\varepsilon(\lambda^2+\|\nabla w\|^2_{L^\infty})\|w\|_z^2+\varepsilon\|\Delta w\|_{L^2}^2)
        \end{equation*}
        and
        \begin{equation*}
            |\textnormal{d}E_\varepsilon(u_t)[\textnormal{d}P(u_t)[\partial_t T_t^\lambda]]|\leq C\frac{1}{\lambda}(\|w\|_z^2+\varepsilon(\lambda^2+\|\nabla w\|^2_{L^\infty})\|w\|_z^2+\varepsilon\|\Delta w\|_{L^2}^2).
        \end{equation*}
        \item If $T=\nabla_Az$ then,
        \begin{equation*}
            |\textnormal{d}^2E_\varepsilon(z)[T,W]-\textnormal{d}^2E_\varepsilon(z)[T_t,W_t]|\leq C\lambda(\|w\|_z^2+\varepsilon(\lambda^2+\|\nabla w\|^2_{L^\infty})\|w\|_z^2+\varepsilon\|\Delta w\|_{L^2}^2)
        \end{equation*}
        and
        \begin{equation*}
            |\textnormal{d}E_\varepsilon(u_t)[\textnormal{d}P(u_t)[\partial_t T_t^A]]|\leq C\lambda(\|w\|_z^2+\varepsilon(\lambda^2+\|\nabla w\|^2_{L^\infty})\|w\|_z^2+\varepsilon\|\Delta w\|_{L^2}^2).
        \end{equation*}
    \end{enumerate}
    
\end{lemma}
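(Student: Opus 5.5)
The proof follows the template of Lemma \ref{Lemma: Ben's 5.2}. The only new ingredient is the pointwise size of $T$ and its derivatives, which supplies the prefactor $\frac{1}{\lambda}$ or $\lambda$. I first record, from the appendix estimates (\eqref{eq: lambda z remainders}, \eqref{eq: lambda z bounds away from D_r}, \eqref{eq: nabla A z remainders}, \eqref{eq: nabla A z away from D_r}), the bounds
\begin{equation*}
    |\partial_\lambda z|\leq \frac{C}{\lambda^2\rho_z}\cdot\rho_z^{2}\lambda^{0}\lesssim \frac{C}{\lambda},\qquad |\nabla^k\partial_\lambda z|\leq \frac{C}{\lambda}\rho_z^k,\qquad |\nabla^k\nabla_A z|\leq C\lambda\rho_z^k\quad(k=0,1,2).
\end{equation*}
More precisely, $|\partial_\lambda \pi_\lambda|\lesssim |x|\rho_z/\lambda \lesssim 1/\lambda$ and $|\nabla_A\pi_\lambda|\lesssim\rho_z\lesssim\lambda$, and each spatial derivative costs a factor $\rho_z$. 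Write $\sigma=\lambda^{-1}$ in case 1 and $\sigma=\lambda$ in case 2. Then uniformly $|T|\leq C\sigma$, $|\nabla T|\leq C\sigma\rho_z$ and $|\Delta T|\leq C\sigma\rho_z^2$. Next I compute $T_t-T=\textnormal{d}P(u_t)[T]-\textnormal{d}P(z)[T]$. Since $T\in\Gamma^2(z)$, smoothness of $\textnormal{d}P$ combined with \eqref{eq: u_t bounds} gives
\begin{equation*}
    |T_t-T|\leq C\sigma|w|,\quad |\nabla(T_t-T)|\leq C\sigma(|\nabla w|+|w|\rho_z),\quad |\Delta(T_t-T)|\leq C\sigma(|\Delta w|+|\nabla w|\rho_z+|\nabla w|^2+|w|\rho_z^2).
\end{equation*}
The same bounds without the factor $|w|$ (i.e.\ $|T_t|\le C\sigma$, etc.) hold for $T_t$ itself.

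For the first inequality I expand $\textnormal{d}^2E_\varepsilon(z)[T,W]-\textnormal{d}^2E_\varepsilon(z)[T_t,W_t]$ bilinearly as $\textnormal{d}^2E_\varepsilon(z)[T-T_t,W]+\textnormal{d}^2E_\varepsilon(z)[T_t,W-W_t]$ using \eqref{eq: variation expansions}. I read the statement literally, with $z$ as the base point in both terms; if $u_t$ is intended in the second, I additionally handle the difference of base points exactly as in Lemma \ref{Lemma: Ben's 5.2}, which costs the same factors. Each Dirichlet-type term has the shape $\int(\text{one factor }\lesssim\sigma(|\nabla w|+|w|\rho_z))\cdot(|\nabla w|+|w|\rho_z)$ or $\int |\nabla z|^2\sigma|w|\,|w|$. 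These are bounded by $C\sigma\|w\|_z^2$ by Cauchy–Schwarz, using $|\nabla z|\leq C\rho_z$. Here I lose the $\|w\|_{L^\infty}$ gain of Lemma \ref{Lemma: Ben's 5.2}, because the differences are linear rather than quadratic in $w$; this is consistent with the stated bound.

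The $\varepsilon$-terms are $\int\Delta(T-T_t)\cdot\Delta W$, $\int\Delta T_t\cdot\Delta(W-W_t)$ and $\int \Delta^2z\cdot z\,(T\cdot W-T_t\cdot W_t)$. I bound them via \eqref{eq: Delta W_t bounds}, the bounds above and \eqref{eq: Delta^2 z global bound}. Products of the form $(|\Delta w|+|\nabla w|\rho_z+|\nabla w|^2+|w|\rho_z^2)^2$ are split as in Lemma \ref{Lemma: Ben's 5.2}. Terms with $\rho_z^2$ factors are controlled by $\|\rho_z\|_{L^\infty}^2\|w\|_z^2\leq\lambda^2\|w\|_z^2$, terms with $|\nabla w|^2$ by $\|\nabla w\|_{L^\infty}^2\|w\|_z^2$, and the pure $|\Delta w|^2$ term by $\|\Delta w\|_{L^2}^2$. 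Each carries the factor $\sigma$. The $\Delta^2 z$ term gives $\varepsilon\sigma\lambda^2\int\rho_z^2|w|^2\le C\varepsilon\sigma\lambda^2\|w\|_z^2$.

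For the second inequality I differentiate $T_t=\textnormal{d}P(u_t)[T]$ in $t$. Using $\partial_tu_t=W_t$ this gives $\partial_tT_t=\textnormal{d}^2P(u_t)[W_t,T]$, hence $|\textnormal{d}P(u_t)[\partial_tT_t]|\leq C\sigma|w|$, with derivative bounds as for $T_t-T$. I then plug this into $\textnormal{d}E_\varepsilon(u_t)[\cdot]=\int\nabla u_t\cdot\nabla(\cdot)+\varepsilon\Delta u_t\cdot\Delta(\cdot)$ and use \eqref{eq: u_t bounds}. The main obstacle is the linear term $\int\rho_z\cdot\sigma(|\nabla w|+|w|\rho_z)$, which is only first order in $w$. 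To fix it I will use tangency. Because $\textnormal{d}P(u_t)[\partial_tT_t]\perp u_t$, I integrate by parts to get $-\int\Delta u_t\cdot(\cdot)$, and only $(\Delta u_t)^{\top}$ contributes. Writing $\Delta u_t=\Delta z+O(\cdots w)$ and $\Delta z^{\top}$ via \eqref{eq: Delta z in the ball}, I should obtain one more factor of $w$. Alternatively, I can note that $\textnormal{d}^2P(u_t)[W_t,T]$ is, up to $O(|w|^2)$, parallel to $u_t$ (it equals $-(W_t\cdot T)u_t$ plus terms of size $\sigma|w|^2$). Its tangential projection is then $O(\sigma|w|^2)$, and its derivatives are correspondingly quadratic. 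This yields $C\sigma\|w\|_z^2$ plus the $\varepsilon$-terms, treated as before.
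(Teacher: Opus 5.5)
Your proposal is correct and follows the paper's proof. The common steps are:
\begin{itemize}
\item the same expansion of the difference of second variations (the paper does take $u_t$ as the base point in the second term, which you also cover);
\item the same pointwise bounds $|T|\le C\sigma$, $|\nabla T|\le C\sigma\rho_z$, $|\Delta T|\le C\sigma\rho_z^2$, together with the induced bounds on $T_t$ and $T_t-T$;
\item term-by-term Cauchy--Schwarz/Young bookkeeping;
\item for the second inequality, your route (b), which is exactly the paper's key identity $\textnormal{d}P(u_t)[\partial_tT_t]=-(T\cdot u_t)W_t=(T\cdot(z-u_t))W_t$. This makes the test vector quadratic in $w$ and handles the Dirichlet and $\varepsilon$ parts at once.
\end{itemize}

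You should commit to (b) and discard route (a). Since $(\Delta z)^{\top_z}\neq 0$, pairing it with a test vector known only to satisfy $|\cdot|\le C\sigma|w|$ leaves a term linear in $w$, of order $\sigma\lambda^{-2}(\log\lambda)^{1/2}\|w\|_z$. Tangency also does nothing for $\varepsilon\int\Delta u_t\cdot\Delta(\cdot)$.

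A minor correction: for $T=\partial_\lambda z$, the full Hessian bound $|\nabla^2T|\le C\lambda^{-1}\rho_z^2$ that you list is false in the neck, where $|\nabla^2\partial_\lambda\pi_\lambda|$ is of order $\lambda^{-2}|x|^{-3}$. This is harmless, because only $\Delta T$ enters $\Delta T_t$, and that bound does hold.
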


\begin{proof}

Note that we have, using \eqref{eq: variation expansions},
\begin{align}
\begin{split}\label{eq: second variation T expansion}
    \text{d}^2E_\varepsilon(z)[T,W]-\text{d}^2E_\varepsilon(z)[T_t,W_t]=&\int_\Sigma\nabla W\cdot\nabla(T-T_t)+\nabla(W-W_t)\cdot\nabla T_t\\
    &\quad-(|\nabla z|^2-|\nabla u_t|^2)(T\cdot W)-|\nabla u_t|^2(T\cdot W-T_t\cdot W_t)\\
    &+\varepsilon\int_\Sigma\Delta W\cdot\Delta(T-T_t)+\Delta(W-W_t)\cdot\Delta T_t\\
    &\quad-\Delta^2z\cdot(z(T\cdot W)-u_t(T_t\cdot W_t))\\
    &\quad-\Delta(z-u_t)\cdot\Delta(u_t(T_t\cdot W_t)).
\end{split}
\end{align}
Also note that
\begin{align*}
    \nabla T_t=&\text{d}P(u_t)[\nabla T]+\text{d}^2P(u_t)[T,\nabla u_t],\\
    \Delta T_t=&\text{d}P(u_t)[\Delta T]+\text{d}^2P(u_t)(2[\nabla T,\nabla u_t]+[T,\Delta u_t])+\text{d}^2P(u_t)[T,\nabla u_t,\nabla u_t].
\end{align*}
This gives, using \eqref{eq: u_t bounds},
\begin{equation}
\begin{split}\label{eq: Initial T bounds}
    |T_t|\leq& |T|,\\
    |\nabla T_t|\leq &C\big(|T|(\rho_z+|\nabla w |)+|\nabla T|\big),\\
    |\Delta T_t|\leq &C\big(|T|(\rho_z^2+|\nabla w|\rho_z+|\nabla w|^2+|\Delta w|)\\
    &\quad\quad+|\nabla T|(\rho_z+|\nabla w |)+|\Delta T|\big),\\
    |T_t-T|\leq& C|T|\cdot|w|,\\
    |\nabla(T_t-T)|\leq& C\big(|T|(|w|\rho_z+|\nabla w |)+|\nabla T|\cdot|w|\big),\\
    |\Delta(T_t-T)|\leq& C\big(|T|(|w|\rho_z^2+|\nabla w|\rho_z+|\nabla w|^2+|\Delta w|)\\
    &\quad\quad+|\nabla T|(|w|\rho_z+|\nabla w |)+|\Delta T|\cdot|w|\big).
\end{split}
\end{equation}
In the case of $T=\partial_\lambda z$ we have the bounds from \eqref{eq: lambda z remainders} and \eqref{eq: lambda z bounds away from D_r},
\begin{align*}
    |T|\leq& C\frac{1}{\lambda}\\
    |\nabla T|\leq&C\frac{1}{\lambda}\rho_z\\
    |\Delta T|\leq&C \frac{1}{\lambda}\rho_z^2
\end{align*}
this then gives the bounds using \eqref{eq: Initial T bounds}
\begin{align}
\begin{split}\label{eq: T^lambda bounds}
    |T_t|\leq&C\frac{1}{\lambda},\\
    |\nabla T_t|\leq &C\frac{1}{\lambda}(\rho_z+|\nabla w |),\\
    |\Delta T_t|\leq &C\frac{1}{\lambda}(\rho_z^2+\rho_z|\nabla w|\\&+|\nabla w|^2+|\Delta w|),
\end{split}
\hspace{7mm}
\begin{split}
    |T_t-T|\leq& C\frac{1}{\lambda}|w|,\\
    |\nabla(T_t-T)|\leq& C\frac{1}{\lambda}(\rho_z|w|+|\nabla w |),\\
    |\Delta(T_t-T)|\leq& C\frac{1}{\lambda}(\rho_z^2|w|+\rho_z|\nabla w|\\&+|\nabla w|^2+|\Delta w|).
\end{split}
\end{align}
\begin{comment}
We can then obtain the bounds using \eqref{eq: first W_t bounds}, \eqref{eq: Delta W_t bounds}, \eqref{eq: u_t bounds} and \eqref{eq: T^lambda bounds}
\begin{align*}
    ||\nabla z|^2-|\nabla u_t|^2|\leq &C(\rho_z^2|w|+\rho_z|\nabla w|+|\nabla w|^2)\\
    |T\cdot W-T_t\cdot W_t|\leq & C\frac{1}{\lambda}|w|^2\\
    |z(T\cdot W)-u_t(T_t\cdot W_t)|\leq & C\frac{1}{\lambda}|w|^2\\
    \Delta (u_t(T_t\cdot W_t))\leq & C\frac{1}{\lambda}(|\Delta w|+|\nabla w|\rho_z+|\nabla w|^2+|w|\rho_z^2).
\end{align*}
\end{comment}
Inserting these bounds into \eqref{eq: second variation T expansion} as well as using bounds from \eqref{eq: first W_t bounds}, \eqref{eq: Delta W_t bounds}, \eqref{eq: u_t bounds}, \eqref{eq: T^lambda bounds} and \eqref{eq: Delta^2 z global bound} one can obtain the first inequality.

\vspace{5mm}

To get the second inequality we note that for $T\in \Gamma^2(z)$ we have
\begin{align*}
    \text{d}P(u_t)[\partial_t T_t]
    %=& \text{d}P(u_t)[\partial_t( T-(T\cdot u_t)u_t)]\\
    =&\text{d}P(u_t)[-(T\cdot W_t)u_t-(T\cdot u_t)W_t]\\
    =&-(T\cdot u_t)W_t\\
    =&(T\cdot(z-u_t))W_t
\end{align*}
using $T\cdot z=0$. 
By differentiating this form and appealing to \eqref{eq: first W_t bounds}, \eqref{eq: Delta W_t bounds}, \eqref{eq: u_t bounds} and \eqref{eq: T^lambda bounds} we get the bounds
\begin{align*}
    |\text{d}P(u_t)[\partial_t T_t]|\leq&C|T|\cdot|w|^2\\
    |\nabla \text{d}P(u_t)[\partial_t T_t]|\leq&C\big(|T|(|w|^2\rho_z+|w|\cdot|\nabla w|)+|\nabla T|\cdot|w|^2\big)\\
    |\Delta \text{d}P(u_t)[\partial_t T_t]|\leq & C\big(|T|(|\nabla w|^2+|w|\cdot|\nabla w|\rho_z+|w|^2\rho_z^2+|w|\cdot|\Delta w|)\\
    &+|\nabla T|(|w|^2\rho_z+|w|\cdot|\nabla w|)+|\Delta T|\cdot|w|^2\big).
\end{align*}
In the case of $T=\partial_\lambda z$, this gives
\begin{align*}
    |\text{d}P(u_t)[\partial_t T_t]|\leq&C\frac{1}{\lambda}|w|^2,\\
    |\nabla \text{d}P(u_t)[\partial_t T_t]|\leq&C\frac{1}{\lambda}(\rho_z|w|^2+|w|\cdot|\nabla w|),\\
    |\Delta \text{d}P(u_t)[\partial_t T_t]|\leq & C\frac{1}{\lambda}(|\nabla w|^2+\rho_z|w|\cdot|\nabla w|+\rho_z^2|w|^2+|w|\cdot|\Delta w|).
\end{align*}
From the above, \eqref{eq: variation expansions} and \eqref{eq: u_t bounds} we obtain
\begin{align*}
    \text{d}E_\varepsilon(u_t)[\text{d}P(u_t)[\partial_t T_t^\lambda]]\leq & \int_\Sigma \nabla u_t\cdot\nabla \text{d}P(u_t)[\partial_t T_t]+\varepsilon \Delta u_t\cdot\Delta \text{d}P(u_t)[\partial_t T_t]\\
    \leq&C\frac{1}{\lambda}(\|w\|_z^2+\varepsilon(\lambda^2+\|\nabla w\|^2_{L^\infty})\|w\|_z^2+\varepsilon\|\Delta w\|_{L^2}^2)
\end{align*}
giving the second inequality.

\vspace{5mm}

In the case of $T=\nabla_A z$ we instead have the bounds from \eqref{eq: nabla A z remainders} and \eqref{eq: nabla A z away from D_r}
\begin{align*}
    |T|\leq&C\lambda\\
    |\nabla T|\leq&C{\lambda}\rho_z\\
    |\Delta T|\leq & C\lambda\rho_z^2
\end{align*}
so the proof for the final two inequalities is identical.

\end{proof}

\appendix

\section{Initial bounds}\label{section: bounds on z}

In general we know that close enough to the bubble point $z$ will look like $\pi_\lambda$ plus some lower order remainder term for $\lambda$ large enough.
We would like to show this quantitatively.
We start by noting the following exact derivatives of $\pi_\lambda$
\begin{align}
\begin{split}\label{eq: exact pi derivatives}
    \pi_\lambda=&(\frac{2\lambda x}{1+\lambda^2|x|^2},\frac{1-\lambda^2|x|^2}{1+\lambda^2|x|^2}),\\
    \nabla\pi_\lambda=&\frac{2\lambda      }{(1+\lambda^2|x|^2)^2}
    \begin{pmatrix}
        1-\lambda^2x_1^2+\lambda^2x_2^2 & -2\lambda^2x_1x_2 & -2\lambda x_1\\
        -2\lambda^2x_1x_2    &  1+\lambda^2x_1^2-\lambda^2x_2^2  & -2\lambda x_2
    \end{pmatrix},\\
    \Delta\pi_\lambda=&\frac{-8\lambda^2}{(1+\lambda^2|x|^2)^2}\pi_\lambda,\\
    \nabla_i\Delta\pi_\lambda=&\frac{32\lambda^4x_i}{(1+\lambda^2|x|^2)^3}\pi_\lambda-\frac{8\lambda^2}{(1+\lambda^2|x|^2)^2}\nabla_i\pi_\lambda,\\
    \Delta^2\pi_\lambda=&\frac{96\lambda^4-160\lambda^6|x|^2}{(1+\lambda^2|x|^2)^4}\pi_\lambda+\frac{64\lambda^4x_i}{(1+\lambda^2|x|^2)^3}\nabla_i\pi_\lambda.\\
\end{split}
\end{align}
It is easy to see that for any $m\geq 1$ and $x\in\mathbb{D}_r$ we have the following bounds
\begin{align}
\begin{split}\label{eq: naive pi derivative bounds}
    |\nabla^m\pi_\lambda|\leq& C(m)\frac{\lambda^m}{(1+\lambda^2|x|^2)^{(m+1)/2}},\\
    |\nabla^m\Delta\pi_\lambda|\leq &C(m)\frac{\lambda^{m+2}}{(1+\lambda^2|x|^2)^{m/2+2}}.
\end{split}
\end{align}
We also have in $\mathbb{D}_r$ for $m\geq 1$
\begin{align}
\begin{split}\label{eq: naive j derivative bounds}
    |j_\lambda|\leq &C\frac{|x|}{\lambda},\\
    |\nabla^mj_\lambda|\leq& C(m)\frac{1}{\lambda}.
\end{split}    
\end{align}
In $\mathbb{D}_r$ we have $z=P(\pi_\lambda+j_\lambda)$. We can differentiate this equation to get
\begin{align*}
    \nabla z=& \text{d}P(\pi_\lambda+j_\lambda)[\nabla\pi_\lambda+\nabla j_\lambda]\\
    =&\nabla\pi_\lambda+\text{d}P(\pi_\lambda+j_\lambda)[\nabla j_\lambda]+(\text{d}P(\pi_\lambda+j_\lambda)-\text{d}P(\pi_\lambda))[\nabla\pi_\lambda]
\end{align*}
and in general we see that $\nabla^kz=\nabla^k\pi_\lambda+R_k$ where $R_k$ is a remainder consisting of terms of the following forms
\begin{enumerate}
    \item $(\text{d}^tP(\pi_\lambda+j_\lambda)-\text{d}^tP(\pi_\lambda))[\nabla^{a_1}\pi_\lambda,\dots,\nabla^{a_t}\pi_\lambda]$ where all the $a_i$ are strictly positive integers and $a_1+...+a_t=k$\\
    \item $\text{d}^tP(\pi_\lambda+j_\lambda)[\nabla^{b_1}j_\lambda,\dots,\nabla^{b_u}j_\lambda,\nabla^{c_1}\pi_\lambda,\dots,\nabla^{c_v}\pi_\lambda]$ where $u$, the $b_i$ and the $c_i$ are strictly positive integers, $v$ is a non negative integer, $u+v=t$ and $\Sigma_ib_i+\Sigma_jc_j=k$.
\end{enumerate}

Applying \eqref{eq: naive pi derivative bounds} and \eqref{eq: naive j derivative bounds} to these remainders we obtain the bounds in $\mathbb{D}_r$
\begin{align}
\begin{split}\label{eq: z remainders}
    z=&\pi_\lambda+\mathcal{O}(\frac{|x|}{\lambda}),\\
    %|R_1|=&\mathcal{O}(\frac{1}{\lambda})\\
    \nabla^k z=&\nabla^k\pi+\mathcal{O}(\frac{\lambda^{k-2}}{(1+\lambda^2|x|^2)^{(k-1)/2}})\quad\text{ for }k\geq 1.
\end{split}
\end{align}
For Laplacian terms we can get a slightly stronger bound. This uses the fact that $\Delta j_\lambda=0$. We obtain in $\mathbb{D}_r$ 
\begin{equation}\label{eq: Delta z in the ball}
    \nabla^k\Delta z=\nabla^k\Delta \pi_\lambda+\mathcal{O}(\frac{\lambda^{k}}{(1+\lambda^2|x|^2)^{k/2+1}})\quad\text{ for }k\geq 0.
\end{equation}

Using the nature of $z$ on $\mathbb{D}_{2r}\setminus\mathbb{D}_r$ and on $\Sigma\setminus U_{2r}(a)$, it is easy to see that
\begin{align}
\begin{split}\label{eq: z bounds away from D_r}
    |z|=&\mathcal{O}(1),\\
    |\nabla^k z|=&\mathcal{O}(\frac{1}{\lambda})\quad\text{ for }k\geq 1,\\
    |\nabla^k\Delta z|=&\mathcal{O}(\frac{1}{\lambda^2})\quad\text{ for }k\geq 0
\end{split}
\end{align}
on $\Sigma\setminus U_{r}(a)$.

\subsection{$\lambda$ derivatives}
We first note that
\begin{align*}
    \partial_\lambda\pi_\lambda= &\frac{1}{\lambda}(x\cdot\nabla)\pi_\lambda,\\
    \partial_\lambda j_\lambda =& -\frac{1}{\lambda}j.
\end{align*}
These gives us the following bounds for $m\geq1$
\begin{align*}
    |\nabla^m\partial_\lambda\pi_\lambda|\leq& C(m)\frac{\lambda^{m-1}}{(1+\lambda^2|x|^2)^{(m+1)/2}},\\
    |\partial_\lambda j_\lambda|\leq & C \frac{|x|}{\lambda^2},\\
    |\nabla^m\partial_\lambda j_\lambda|\leq& C(m)\frac{1}{\lambda^2}.
\end{align*}
Then we note that $\partial_\lambda \nabla^kz=\partial_\lambda \nabla^k\pi_\lambda+\partial_\lambda R_k$ so
by differentiating our remainders from before one can obtain the bounds in $\mathbb{D}_r$
\begin{align}
\begin{split}\label{eq: lambda z remainders}
    \partial_\lambda z=&\partial_\lambda\pi_\lambda+\mathcal{O}(\frac{|x|}{\lambda^2}),\\
    \partial_\lambda \nabla^k z=& \partial_\lambda \nabla^k \pi_\lambda+\mathcal{O}(\frac{\lambda^{k-3}}{(1+\lambda^2|x|^2)^{(k-1)/2}})\quad\text{ for }k\geq 1,\\
    \partial_\lambda\nabla^k\Delta z=&\partial_\lambda\nabla^k\Delta \pi_\lambda+\mathcal{O}(\frac{\lambda^{k-1}}{(1+\lambda^2|x|^2)^{k/2+1}})\quad\text{ for }k\geq 0.
\end{split}
\end{align}

Also by a similar analysis to before of $z$ on the annulus $\mathbb{D}_{2r}\setminus\mathbb{D}_r$ and on the rest of $\Sigma$, we obtain on $\Sigma\setminus U_r$
\begin{align}
\begin{split}\label{eq: lambda z bounds away from D_r}
    |\partial_\lambda \nabla^k z|=&\mathcal{O}(\frac{1}{\lambda^2})\quad\text{ for }k\geq 0,\\
    |\partial_\lambda \nabla^k\Delta z|=&\mathcal{O}(\frac{1}{\lambda^3})\quad\text{ for }k\geq 0.
\end{split}
\end{align}

\subsection{$\nabla_A$ derivatives}

For the $\nabla_A$ derivatives we need to take more care. In the flat case we are working on a homogeneous domain so our choice of base point does not change anything and all derivatives are 0. Now in the hyperbolic case given $A\in T_a\Sigma$ we can set $a_s=\text{Exp}_a^\Sigma(sA)$, some path in $\Sigma$ with derivative $A$ at $a$. Then
\begin{equation*}
    \nabla_Az_{\lambda,a}=\frac{\partial}{\partial s}z_{\lambda,a_s}\bigg\vert_{s=0}.
\end{equation*}
Then by explicit calculation we have inside $\mathbb{D}_r$ that, as in \cite{sharp_lowenergyalpha},
\begin{equation}
    \nabla_A z = - A^i\partial_iz-|x|^2A^i\partial_iz+2(A\cdot x)x^i\partial_i z.
\end{equation}
In general we can write for $k\geq 1$ that $\nabla^k\nabla_A z=-\nabla^kA^i\partial_iz+R_k$ where we can bound $R_k$ by
\begin{equation}\label{eq: nabla A z remainders}
    |R_k|\leq C(|x|^2|\nabla^{k+1}z|+|x|\cdot|\nabla^kz|+|\nabla^{k-1}z|)\leq C \frac{\lambda^{k-1}}{(1+\lambda^2|x|^2)^{k/2}}.
 \end{equation}
It is easy to see that on $\Sigma\setminus U_r$ we get for all $k\geq 0$
\begin{equation}\label{eq: nabla A z away from D_r}
    |\nabla^k\nabla_Az|\leq \frac{1}{\lambda}.
\end{equation}

\subsection{Terms arising from the metric}

For higher order terms in the hyperbolic case we also get terms arising from the metric.
In the hyperbolic case inside $\mathbb{D}_r$ we have, using \eqref{eq: Delta z in the ball},
\begin{align}
\begin{split}\label{eq: Delta squared z metric bound}
    (\Delta^g)^2z=&\frac{(1-|x|^2)^2}{4}\Delta(\frac{(1-|x|^2)^2}{4}\Delta z)\\
    =&(\frac{(1-|x|^2)^2}{4})^2\Delta^2z+\mathcal{O}(|x|)\nabla\Delta z+\mathcal{O}(1)\Delta z\\
    =&\frac{1}{c_\gamma^2}\Delta^2\pi_\lambda+\mathcal{O}(\frac{\lambda^2}{(1+\lambda^2|x|^2)^2}).
\end{split}
\end{align}
The final bound also holds in the flat case.
Using \eqref{eq: z bounds away from D_r}, this gives in particular the global bound
\begin{equation}\label{eq: Delta^2 z global bound}
    |\Delta^2 z|\leq C\lambda^2\rho_z^2.
\end{equation}
Differentiating through $\lambda$ and using \eqref{eq: lambda z remainders} also gives the result in $\mathbb{D}_r$
\begin{equation}\label{eq: lambda delta^2 z global bound}
    \partial_\lambda(\Delta^g)^2 z=\frac{1}{c_\gamma^2}\partial_\lambda\Delta^2\pi_\lambda+\mathcal{O}(\frac{\lambda}{(1+\lambda^2|x|^2)^2})
\end{equation}
which clearly also holds in the flat case. This and \eqref{eq: lambda z bounds away from D_r} then give the global bound
\begin{equation}
    |\partial_\lambda\Delta^2z|\leq C\lambda\rho_z^2.
\end{equation}
For $\nabla_A$, we get in $\mathbb{D}_r$ that
\begin{align*}
    (\Delta^g)^2\nabla_Az=& -A^i\Delta^2 \partial_iz+\mathcal{O}(|x|^2)\nabla\Delta^2z\\
 &\quad+\mathcal{O}(|x|)\nabla^2\Delta z+\mathcal{O}(1)\nabla^3 z+\mathcal{O}(1)\nabla^2z+\mathcal{O}(1)\nabla z\\
 =&-A^i\Delta^2 \partial_iz+\mathcal{O}(\frac{\lambda^3}{(1+\lambda^2|x|^2)^2}).
\end{align*}
Which with \eqref{eq: nabla A z away from D_r} gives us a global bound of
\begin{equation}\label{eq: delta^2 nabla A z global bound}
    |\Delta^2\nabla_A z |\leq C\lambda^3\rho_z^2.
\end{equation}

\subsection{Extra bounds}

For the lower order terms we will need some sharper estimates in order to obtain the dependence on $\mathcal{J}$. Inside of $\mathbb{D}_r$ one can view $z$ as $z=\pi_\lambda+j_\lambda^\top+K_\lambda$ where 
\begin{equation*}
    j_\lambda^\top=\text{d}P(\pi_\lambda)[j_\lambda]
\end{equation*} is the projection of $j_\lambda$ onto the plane tangent to $\pi_\lambda$ and
\begin{equation*}
    K_\lambda=z-\pi_\lambda-j_\lambda^\top=\int^1_0\text{d}P(\pi_\lambda+tj_\lambda)[j_\lambda]-\text{d}P(\pi_\lambda)[j_\lambda] \text{d}t
\end{equation*} We then have the following extra bounds on $j_\lambda$ and $K_\lambda$ in $\mathbb{D}_r$ as in \cite{sharp_lowenergyalpha}.
\begin{equation}\label{eq: extra j and K bounds}
\begin{split}
    |j_\lambda|=&\mathcal{O}(\frac{|x|}{\lambda}) ,\\ 
    |\nabla j_\lambda^\top|=&\mathcal{O}(\frac{1}{\lambda}) ,\\ 
    |\Delta j_\lambda^\top|=&\mathcal{O}(\frac{1}{(1+\lambda^2|x|^2)}) ,\\ 
    |\nabla^2 j_\lambda^\top|=&\mathcal{O}(\frac{1}{\lambda}+\frac{1}{(1+\lambda^2|x|^2)}) ,\\ 
    |\partial_\lambda j_\lambda^\top|=&\mathcal{O}(\frac{|x|}{\lambda^2})  ,\\
\end{split}
\quad\quad\quad\quad
\begin{split}
    |K_\lambda|=&\mathcal{O}(\frac{|x|^2}{\lambda^2}),\\
    |\nabla K_\lambda|=&\mathcal{O}(\frac{1}{\lambda^2}),\\
    |\Delta K_\lambda|=&\mathcal{O}(\frac{1}{\lambda^2}),\\
    |\nabla^2 K_\lambda|=&\mathcal{O}(\frac{1}{\lambda^2}),\\
    |\partial_\lambda K_\lambda|=&\mathcal{O}(\frac{|x|^2}{\lambda^3}).\\
\end{split}
\end{equation}
We also note the additional bounds
\begin{align}
\begin{split}\label{eq: sharper j and K bounds}
    |(\Delta K_\lambda)^\top|=&\mathcal{O}(\frac{1}{\lambda}\frac{|x|}{1+\lambda^2|x|^2}),\\
    |\Delta j_\lambda^\top\cdot \nabla_Aj^\top_\lambda| &=\mathcal{O}(\frac{|x|}{(1+\lambda^2|x|^2)^2}).
\end{split}
\end{align}
\begin{comment}
We also require a new bound, we start by noting that
\begin{align*}
    \Delta z=& \Delta \pi_\lambda+(\text{d}P(\pi_\lambda+j_\lambda)-\text{d}P(\pi_\lambda))[\Delta\pi_\lambda]+(\text{d}^2P(\pi_\lambda+j_\lambda)-\text{d}^2P(\pi_\lambda))[\nabla\pi_\lambda,\nabla\pi_\lambda])\\
    &+\text{d}^2P(\pi_\lambda+j_\lambda)(2[\nabla\pi_\lambda,\nabla j_\lambda]+[\nabla j_\lambda,\nabla j_\lambda])
\end{align*}
then by differentiating through $\lambda$ and \eqref{eq: lambda z remainders} we obtain
\begin{equation*}
     \Delta\partial_\lambda z= \Delta\partial_\lambda \pi_\lambda+\mathcal{O}(\frac{|x|}{\lambda^2}\rho_z^2)+2\text{d}^2P(\pi_\lambda+j_\lambda)([\nabla\partial_\lambda\pi_\lambda,\nabla j_\lambda]+[\nabla\pi_\lambda,\nabla\partial_\lambda j_\lambda]+[\nabla\partial_\lambda j_\lambda,\nabla j_\lambda]).
\end{equation*}
Now we note that using mean value inequality and the exact form of $\text{d}^2P$ on the round sphere,
\begin{align*}
    |(\text{d}^2P(\pi_\lambda+j_\lambda)[f,g])^{\top_z}|\leq&|(\text{d}^2P(\pi_\lambda)[f,g])^{\top_{\pi_\lambda}}|+C|j_\lambda|\cdot|f|\cdot|g|\\
    \leq& C(|f^{\top_{\pi_\lambda}}|\cdot|g^{\bot_{\pi_\lambda}}|+|g^{\top_{\pi_\lambda}}|\cdot|f^{\bot_{\pi_\lambda}}|+\frac{|x|}{\lambda}|f|\cdot|g|)
\end{align*}
and so analysis of the possible terms yields
\begin{equation}\label{eq: Delta lambda z tangent bound}
    ( \Delta\partial_\lambda z)^{\top_z}= (\Delta\partial_\lambda \pi_\lambda)^{\top_z}+\mathcal{O}(\frac{|x|}{\lambda^2}\rho_z^2).
\end{equation}
\end{comment}
We also note that by expanding out and using the exact form of $\text{d}^2P_{\mathbb{S}^2}$ we obtain
\begin{equation}\label{eq: Delta lambda z tangent bound}
    ( \Delta\partial_\lambda z)^{\top_z}= (\Delta\partial_\lambda \pi_\lambda)^{\top_z}+\mathcal{O}(\frac{|x|}{\lambda^2}\rho_z^2).
\end{equation}

\section{Finishing the proof of Lemma \ref{Lemma: Raw energy bound}}\label{section: proof of raw energy bound}
To finish the proof of Lemma \ref{Lemma: Raw energy bound} we need to expand the Dirichlet energy along $\mathcal{Z}$.
For $z\in\mathcal{Z}$ we claim
\begin{equation*}
    E[z]=\frac{1}{2}\int_\Sigma|\nabla z|^2=4\pi-4\pi\mathcal{J}(a)\frac{1}{\lambda^2}+\mathcal{O}(\frac{1}{\lambda^3}).
\end{equation*}
Using $\eqref{eq: naive pi derivative bounds}$ and $\eqref{eq: naive j derivative bounds}$ one can show that
\begin{align*}
    \int_{\mathbb{D}_{2r}}|\nabla z|^2=&\int_{\mathbb{D}_{2r}}|\nabla \pi|^2+2\Delta \pi\cdot j+2\nabla \pi\cdot \nabla j+|\nabla j|^2+\mathcal{O}(\frac{1}{\lambda^3})\\
    %=&\int_{\mathbb{D}_{2r}}|\nabla \pi|^2+\Delta \pi\cdot j+\Delta( \pi\cdot  j)+|\nabla j|^2+\mathcal{O}(\frac{1}{\lambda^3})\\
    =&8\pi-\frac{8\pi}{1+4\lambda^2r^2}-8\pi\mathcal{J}(a)\frac{1}{\lambda^2}\\
    &+\frac{1}{2r}\int_{\partial\mathbb{D}_{2r}}x^a(\nabla_{x^a}\pi\cdot j+\pi\cdot\nabla_{x^a}j+\nabla_aj\cdot j)+\mathcal{O}(\frac{1}{\lambda^3}).
\end{align*}
Now on $\Sigma\setminus U_{2r}$ we calculate
\begin{align*}
    \int_{\Sigma\setminus U_{2r}}|\nabla z|^2
    %=&\int_{\Sigma\setminus U_{2r}}|\nabla f|^2+\mathcal{O}(\frac{1}{\lambda^3})\\
    %=&-\frac{1}{2r}\int_{\partial\mathbb{D}_{2r}} ((x\cdot\nabla)f)\cdot f+\mathcal{O}(\frac{1}{\lambda^3})\\
    =&\frac{2\pi}{\lambda^2r^2}-\frac{1}{2r}\int_{\partial\mathbb{D}_{2r}}x^a(\nabla_{x^a}\pi\cdot j+\pi\cdot\nabla_{x^a}j+\nabla_aj\cdot j)+\mathcal{O}(\frac{1}{\lambda^3}).
\end{align*}
Summing these two equations completes the proof.

\bibliographystyle{abbrv}
\bibliography{Bibliography}

\endnotemark[1]School of Mathematics, University of Leeds, Leeds, LS2 9JT, United Kingdom\\ A.M.Roberts@leeds.ac.uk

\end{document}